\documentclass[11pt]{article} 

\linespread{0}
\usepackage{fullpage} 
\usepackage{float}
\usepackage{authblk}
\usepackage{pb-diagram} 
\usepackage{url}
\usepackage{hyperref}
\usepackage{algorithm} 
\usepackage{algpseudocode} 
\usepackage{multirow} 
\usepackage{hhline}  
\usepackage{subfigure}
\usepackage{amsmath}
\usepackage{xcolor}
\usepackage{soul}
\usepackage{graphicx}
\usepackage{amscd}
\usepackage{appendix}
\usepackage{amssymb, mathrsfs}
\input{epsf.sty}
\usepackage{amsthm, amscd}
\usepackage{color}
\usepackage{bbm}
\usepackage{latexsym}
\usepackage{epic}
\usepackage{appendix}
\usepackage{enumerate}
\usepackage{longtable}
\usepackage{lscape}
\usepackage{extarrows}
\usepackage{blkarray}
\usepackage{multirow}
\usepackage{listings}
\usepackage{verbatim}
\newlength\myindent

\newcommand\smallO{
  \mathchoice
    {{\scriptstyle\mathcal{O}}} 
    {{\scriptstyle\mathcal{O}}} 
    {{\scriptscriptstyle\mathcal{O}}}
    {\scalebox{.6}{$\scriptscriptstyle\mathcal{O}$}}
  }

\usepackage{titlesec}
\titleformat{\section}
  {\normalfont\fontsize{15}{15}\bfseries}{\thesection}{1em}{}
  
\makeatletter
\def\input@path{{./}{./pstex/}{./image/}}
\graphicspath{{./}{./pstex/}{./image/}}
\makeatother


\newcommand{\comm}[1]{}

\newcommand{\pacomm}[1]{{}}

\newcommand{\delete}[1]{}

\newcommand{\Rmnum}[1]{\expandafter\@slowromancap\romannumeral #1@}
\newcommand{\inner}[3][]{{\langle #2, #3 \rangle_{#1}}}

\usepackage{amsthm}

\theoremstyle{plain}
\newtheorem{theorem}{Theorem}[section]
\newtheorem{lemma}[theorem]{Lemma}
\newtheorem{proposition}[theorem]{Proposition}
\newtheorem{corollary}[theorem]{Corollary}

\theoremstyle{definition}
\newtheorem{definition}[theorem]{Definition}
\newtheorem{assumption}[theorem]{Assumption}

\theoremstyle{remark}
\newtheorem{remark}[theorem]{Remark}

\numberwithin{equation}{section}

\DeclareMathOperator{\T}{\mathrm{T}}
\DeclareMathOperator{\F}{\mathrm{F}}
\DeclareMathOperator{\R}{\mathbb{R}}
\DeclareMathOperator{\Hess}{\mathrm{Hess}}
\DeclareMathOperator{\Proj}{\mathrm{Proj}}
\DeclareMathOperator{\grad}{\mathrm{grad}}
\DeclareMathOperator{\prox}{\mathrm{prox}}

\DeclareMathOperator{\I}{\mathrm{I}}
\DeclareMathOperator{\N}{\mathrm{N}}
\DeclareMathOperator{\D}{\mathrm{D}}

\DeclareMathOperator{\trace}{\mathrm{trace}}

\DeclareMathOperator{\sign}{\mathrm{sign}}

\DeclareMathOperator{\diag}{\mathrm{diag}}

\DeclareMathOperator*{\argmin}{argmin}

\DeclareMathOperator{\sgn}{\mathrm{sgn}}
\DeclareMathOperator{\M}{\mathcal{M}}

\DeclareMathOperator{\W}{\mathcal{W}}
\DeclareMathOperator{\U}{\mathcal{U}}
\DeclareMathOperator{\J}{\mathcal{J}}
\DeclareMathOperator{\K}{\mathcal{K}}

\begin{document}

\title{A Riemannian Proximal Newton Method
\footnotetext{Corresponding author: 
Wen Huang (\texttt{wen.huang@xmu.edu.cn}). WH was partially supported by the National Natural Science Foundation of China (NO. 12001455). Wutao Si was partially supported by the National Natural Science Foundation of China (NO. 12001455, No. 12171403) and the Fundamental Research Funds for the Central Universities (No. 20720210032). P.-A. Absil was supported by the Fonds de la Recherche Scientifique -- FNRS and the Fonds Wetenschappelijk Onderzoek -- Vlaanderen under EOS Project no 30468160, and by the Fonds de la Recherche Scientifique -- FNRS under Grant no T.0001.23. Simon Vary is a beneficiary of the FSR Incoming Post-doctoral Fellowship.}
}

\author[1]{Wutao Si}
\author[2]{P.-A. Absil}
\author[1]{Wen Huang}
\author[3]{Rujun Jiang}
\author[2]{Simon Vary}

\affil[1]{School of Mathematical Sciences, Xiamen University, Xiamen, China.\vspace{.15cm}}
\affil[2]{ICTEAM Institute, UCLouvain, Louvain-la-Neuve, Belgium.\vspace{.15cm}}
\affil[3]{School of Data Science, Fudan University, Shanghai, China.}

\maketitle




\begin{abstract}
In recent years, the proximal gradient method and its variants have been generalized to Riemannian manifolds for solving optimization problems with an additively separable structure, i.e.,  $f + h$, where $f$ is continuously differentiable, and $h$ may be nonsmooth but convex with computationally reasonable proximal mapping. In this paper, we generalize the proximal Newton method to embedded submanifolds for solving the type of problem with $h(x) = \mu \|x\|_1$. 
The generalization relies on the Weingarten and semismooth analysis. It is shown that the Riemannian proximal Newton method has a local quadratic convergence rate under certain reasonable assumptions. Moreover, a hybrid version is given by concatenating a Riemannian proximal gradient method and the Riemannian proximal Newton method. It is shown that if  the switch parameter is chosen appropriately, then the hybrid method converges globally and also has a local quadratic convergence rate. Numerical experiments on random and synthetic data are used to demonstrate the performance of the proposed methods.
\end{abstract}

\textbf{Key words.} Riemannian optimization, manifold optimization, proximal Newton method, embedded submanifold, quadratic


\section{Introduction}\label{sec:Intro}
In this paper, we consider the following optimization problem
\begin{equation}\label{1-1}
\begin{aligned}
\min_{x \in \M} F(x) = f(x) + h(x)\ \hbox{with}\ h(x) = \mu \|x\|_1,
\end{aligned}
\end{equation} 
where $\M $ is a finite-dimensional 
embedded submanifold of an Euclidean space, see Definition \ref{def-2-1},
$f$ is a sufficiently smooth function, and $\mu > 0$.
This optimization problem arises in many important applications, such as sparse principal component analysis \cite{Zou2006,Zou2018}, sparse partial least squares regression~\cite{Chen2019}, compressed model~\cite{Ozolin2013}, sparse inverse covariance estimation~\cite{Bollhofer2019},  sparse blind deconvolution~\cite{Zhang2017} and clustering~\cite{Huang2022,Lu2016,Park2018}.

In the case that the manifold constraint is dropped, i.e., $\M = \R^n$, and the function $h$ is not restricted to be $\mu \|x\|_1$ but a continuous, convex, and not necessarily smooth function, the Euclidean nonsmooth  problem \eqref{1-1} has been extensively studied 
in~\cite{Beck2017,Nesterov2018}. The well-known methods include the proximal gradient method and its variants, which have found many practical successes in applications~\cite{Beck2009,Shi2014,Tibshirani1996,Widmer2012}. The proximal gradient method updates the iterate via
\begin{equation}\label{1-2}
\begin{cases}v_k = \argmin_{v \in \R^n}\ f(x_k) + \nabla f(x_k)^{\T} v + \frac{1}{2 t} \|v\|_{\F}^2 + h(x_k + v), & \text{(Proximal mapping)} \\ x_{k+1}=x_k + v_k, & \text{(Update iterates)}\end{cases}
\end{equation}
where the proximal mapping in
\eqref{1-2}  uses the first-order approximation of $f$ around the current estimate $x_k$.  If the function $f$ is convex, then under certain standard assumptions,
the proximal gradient method has an  $\mathcal{O}(1\slash k)$ rate of convergence~\cite[Chapter~10]{Beck2017}. 
Moreover, multiple accelerated versions of the proximal gradient method have been proposed in~\cite{Attouch2016,Beck2009b,Li2015,Villa2013}, which achieve the optimal convergence rate $\mathcal{O}(1/k^2)$~\cite{nesterov1983method}.
If the function $f$ is further assumed to be strongly convex, then the convergence rate of the proximal gradient method can be shown to be linear.

With the presence of the manifold constraint, the nonsmooth optimization problem \eqref{1-1} becomes more challenging. The update iterates in \eqref{1-2} can be generalized to the Riemannian setting using a standard technique called \emph{retraction}. However, generalizing the proximal mapping in \eqref{1-2} to the Riemannian setting is not straightforward, and multiple versions have been proposed. In \cite{2020Proximal}, a proximal gradient method called ManPG for solving the optimization problem over the Stiefel manifold is proposed and the update parallel to \eqref{1-2} is given by
\begin{equation}\label{1-4}
\hspace{-0.4em}
\begin{cases} v_k = \argmin_{v \in \T_{x_k} \M}\ f(x_k) + \nabla f(x_k)^{\T} v + \frac{1}{2t} \|v\|_{\F}^2 + h(x_k + v), & \hspace{-1em} \text{(Proximal mapping)} \\ x_{k+1}=R_{x_k} (v_k). & \hspace{-0.8em} \text{(Update iterates)}\end{cases}
\end{equation}
Compared to the Euclidean setting, the Riemannian proximal mapping in~\eqref{1-4} has an additional linear constraint that ensures the search direction $v_k$ stays in the tangent space $\T_{x_k} \M$. It is shown in~\cite[Section~4.2]{2020Proximal} that the proximal mapping in~\eqref{1-4} can be solved efficiently by a semismooth Newton method.
Moreover, the global convergence of ManPG has been established. In~\cite{Huang2022a}, a diagonal weighted proximal mapping is defined by replacing $\|v\|_{\F}^2$ in \eqref{1-4} by $\langle v, W v \rangle$, where the diagonal weighted linear operator $W$ is motivated from the Hessian. In addition, an accelerated proximal gradient method is generalized to the Riemannian setting called AManPG that empirically exhibits accelerated behavior over the Stiefel manifold. The Riemannian generalization of the proximal mapping in~\eqref{1-4} requires being able to perform a linear combination of $x_k + v$,
 which cannot be defined on a generic manifold.  In~\cite{Huang2022b},  a Riemannian gradient method called RPG is proposed by replacing the addition $x_k + v$ with a retraction $R_{x_k}(v)$, which yields a Riemannian proximal mapping
\begin{equation}\label{1-5}
    v_k = \argmin_{v \in \T_{x_k} \M}\ f(x_k) + \langle \grad f(x_k), v\rangle_{x_k}
    + \frac{1}{2t} \|v\|_{x_k}^2 + h\left(R_{x_k}(v)\right),
\end{equation}
where $\grad f$ denotes the Riemannian gradient of $f$ and $\langle \cdot,\cdot\rangle_x$ is Riemannian metric defined on the tangent space $\T_x \M$. Not only the global convergence but also the local convergence rate has been established in terms of the Riemannian KL property. The same authors further propose the inexact Riemannian proximal gradient method called IRPG without solving \eqref{1-5} exactly in~\cite{Huang2023}. The global convergence and local convergence rates have also been given. Moreover, the search direction given by~\eqref{1-4} can be viewed as an inexact solution of the Riemannian proximal mapping~\eqref{1-5} that still guarantees global convergence. Though the Riemannian proximal gradient method with~\eqref{1-5} has nice global and local convergence results, it is still unknown whether Subproblem~\eqref{1-5} can be solved efficiently in general.

If $\M = \R^n$ and the function $f$ is twice continuously differentiable and strongly convex,  proximal Newton-type methods are proposed in~\cite{Lee2014} and achieve a superlinear convergence rate.
The proximal mapping in the proximal Newton-type methods replaces $f$ with a second-order approximation and yields the search direction
\begin{equation}\label{1-3}
    v_k = \argmin_{v \in \R^n}\ f(x_k) + \nabla f(x_k)^{\T} v + \frac{1}{2 } v^{\T} H_k v + h(x_k + v),
\end{equation}
where $H_k$ represents a suitable approximation of the exact Hessian $\nabla^2 f(x_k)$. If $H_k$ is chosen to be the Hessian, i.e., $H_k = \nabla^2  f(x_k)$, then the search direction~\eqref{1-3} yields the proximal Newton method.  The Euclidean proximal Newton-type method traces its prototype back to~\cite{josnewton,josquasi}, where it was primarily used to solve generalized equations. Over the next few decades, this method was extensively studied and has been proven to be efficient and effective in many applications~\cite{Karimi2017,Pan2013,Santos2018,NIPS2014}. In the Euclidean setting, if the function $f$ is twice continuously differentiable and strongly convex, the Hessian of $f$ is Lipschitz continuous, and the function $h$ is convex, then the proximal Newton method converges quadratically. When generalizing to the Riemannian setting, some of the assumptions may be too strong. For example, a geodesically-convex function, which is a commonly-used Riemannian generalization of convexity, on a compact manifold must be a constant function, see~\cite[Corollary~11.10]{boumal2023}.  In~\cite{Mordukhovich2023}, the authors remove the global convexity of $f$ of the Euclidean proximal Newton method while still guaranteeing global convergence and local superlinear convergence. However, the manifold curvature appears in the second-order information of the objective function, which still significantly increases the difficulty of generalizing to the Riemannian setting. The recent paper~\cite{Wang2023} proposes the proximal quasi-Newton method, called ManPQN, over the Stiefel manifold with the Riemannian proximal mapping
\begin{equation}\label{manpqn}
    v_k = \argmin_{v \in \T_{x_k} \M}\ f(x_k) + \langle \grad f(x_k), v\rangle
    + \frac{1}{2} \|v\|_{\mathcal{B}_k}^2 + h\left(x_k + v\right),
\end{equation}
where $\mathcal{B}_k$ is a symmetric positive definite operator on $\T_{x_k}\M$ and $\|v\|_{\mathcal{B}_k}^2 = \langle v, \mathcal{B}_k[v]\rangle$. Additionally, the global convergence of ManPQN has been established. However, in both theoretical and numerical results, only the local linear convergence of ManPQN has been demonstrated. Note that the naive generalization of replacing the Euclidean gradient and Hessian with the Riemannian counterparts generally does not yield a superlinear convergence result, see details in Subsection~\ref{subsec:naive}.

The main contributions of this paper are summarized as follows. A Riemannian proximal Newton method, called RPN, is proposed and studied. This method is based on the idea of semismooth implicit function analysis, which is different from the naive generalization of Euclidean setting~\cite{Lee2014}. It is proven that the proposed algorithm is capable of achieving quadratic convergence under certain reasonable assumptions. The local result requires that the iterative point is sufficiently close to an optimal point. We show that the distance between the iterative point and the optimal point can be controlled by the norm of search direction. Therefore, a hybrid version by concatenating a Riemannian proximal gradient method and the Riemannian proximal Newton method is given, and its global and local quadratic convergence is guaranteed. Numerical experiments are used to demonstrate the performance of the proposed methods.

There also exist multiple generic nonsmooth optimization algorithms on Riemannian manifolds that can be used to solve Problem~\eqref{1-1}, such as subgradient-based algorithms~\cite{GH2016a,li2021weakly}, 
Riemannian gradient sampling algorithm~\cite{HU2017}, 
and Riemannian proximal bundle algorithm~\cite{HNP2021}. These algorithms do not exploit the structure of the objective function in~\eqref{1-1}.
In~\cite{ZBDZ2022}, an augmented Lagrangian method that uses the structure of~\eqref{1-1} is proposed, where the subproblem needs to find a zero of a semismooth vector field on a matrix manifold. A semismooth Newton method is developed therein for the subproblem and is proven to converge superlinearly under certain conditions. The convergence rate of the augmented Lagrangian method is given later in~\cite{ZBD2022} and is shown to converge linearly.

This paper is organized as follows. Notation and preliminaries are given in Section~\ref{sec:NotPre}. The Riemannian proximal Newton method together with its local convergence analysis are presented in Section~\ref{sec:RPN}. The hybrid algorithm is described and analyzed in Section~\ref{sec:glo}.
Numerical experiments are reported in Section~\ref{sec:Num}. Finally, we draw some concluding remarks and potential future directions in Section~\ref{sec:con}.

\section{Notation and Preliminaries}\label{sec:NotPre}

\subsection{Preliminaries on Riemannian Submanifolds of Euclidean Spaces}\label{subsec:Rie}

An $n$-dimensional Euclidean space is denoted by $\R^n$. The Euclidean space $\R^n$ does not only refer to a vector space, but also can refer to a matrix space or a tensor space. In a Euclidean space, the Euclidean metric is typically represented by $\langle u, v \rangle$, which is defined as the sum of the entry-wise products of $u$ and $v$, such as $u^{\T}v$ for vectors and $\trace(u^{\T}v)$ for matrices. The Frobenius norm is denoted by $\|\cdot\|_{\F} = \sqrt{\inner[]{\cdot}{\cdot}}$. A linear operator $\mathcal{A}$ on the Euclidean space $\R^n$ is call self-adjoint or symmetric if it satisfies $\inner[]{\mathcal{A} x}{y} = \inner[]{x}{\mathcal{A} y}$, for all $x, y \in \R^n$. If a symmetric linear operator $\mathcal{A}$ satisfies $\inner[]{\mathcal{A} x}{x} > (\geq) 0$ for all $x \in \R^n \setminus \{0\}$, then $\mathcal{A}$ is called a symmetric positive definite (semidefinite) linear operator and denoted by $\mathcal{A} \succ (\succeq) 0$. 
For $x\in \R^n$, $\|x\|_1$ denotes the sum of the absolute values of all entries in $x$ and $\sgn(x) \in \R^n$ denotes the sign function, i.e., $(\sgn(x))_i = -1$ if $x_i < 0$; $(\sgn(x))_i = 0$ if $x_i = 0$; and $(\sgn(x))_i = 1$ otherwise. For $x, y \in \mathbb{R}^n$, $x \odot y$ denotes $z \in \mathbb{R}^n$ such that $z_i = x_i y_i$ for all $i$, that is, $\odot$ denotes the Hadamard product.

The following is the standard definition of an embedded submanifold~\cite{absil2009,boumal2023}, which is used in the proof of Lemma~\ref{lemm-3-5}. Roughly speaking, an embedded submanifold in an Euclidean space is either an open subset or a smooth surface in the space.

\begin{definition}[Embedded submanifolds of $\R^n$~\cite{boumal2023} ]\label{def-2-1}
Let $\M$ be a subset of a Euclidean space $\R^n$. We say~$\M$ is a (smooth) embedded submanifold of $\R^n$ if either $\mathcal{M}$ is an open subset of $\R^n$ or for a fixed integer $k \ge 1$ and for each $x\in \M$ there exists a neighbourhood $\mathcal{U}$ of $x$ in $\R^n$ and a smooth function $h: \mathcal{U} \to \R^k$ such that
\begin{itemize}
    \item[(a)] If $y$ is in $\mathcal{U}$, then $h(y)=0$ if and only if $y\in \mathcal{M}$; and
    \item[(b)] $\operatorname{rank} \mathrm{D}h(x)=k$,
\end{itemize}
where $\mathrm{D}$ denotes the differential operator. Such a function $h$ is called a local defining function for $\M$ at $x$.
\end{definition}

The tangent space of the manifold $\M$ at $x$ is denoted by $\T_x \M$,
and the tangent bundle, denoted $\T \M$, is the set of all tangent vectors. Since the tangent space is a vector space, one can equip it with an inner product (or metric) $\langle \cdot,\cdot\rangle_x: \T_x\M \times \T_x \M \to \R$. The induced norm from the metric is denoted by $\|\cdot\|_x = \sqrt{ \inner[x]{\cdot}{\cdot}}$. The subscript $x$ of $\|\cdot\|_x$ is omitted if it is clear in the context.
A manifold whose tangent spaces are endowed with a smoothly varying metric is referred to as a Riemannian manifold. 
If the manifold $\M$ is an embedded submanifold of $\R^n$ and the Riemannian metric of $\M$ is endowed from $\R^n$, then $\M$ is called a Riemannian submanifold of $\R^n$. Throughout this paper, the manifold $\mathcal{M}$ is assumed to be a Riemannian submanifold of $\mathbb{R}^n$.

When the manifold $\M$ is an embedded submanifold of $\R^n$, the tangent space $\T_x \M$ is a linear subspace of $\R^n$. In this paper, we assume that the dimension of $\T_x \mathcal{M}$ is $n-d$.  One can define the orthogonal complement space of $\T_x \M$ as $\N_x \M = \T_x^{\perp} \M$, called the normal space of $\M$ at $x$.  If one can find a basis of $\N_x \M$, denoted by $B_x = \left(b_x^{(1)}, b_x^{(2)}, \ldots, b_x^{(d)} \right)$, then the tangent space $\T_x \M$ can be characterized as its orthogonal complement
\[
\T_x \M = \left\{v \in \mathbb{R}^n: B_x^T v := \left( \inner[]{b_x^{(1)}}{v}, \inner[]{b_x^{(2)}}{v}, \ldots, \inner[]{b_x^{(d)}}{v} \right)^T = 0\right\}.
\]
By~\cite[Page 139]{huang2013optimization} and~\cite{Huang_Absil2017}, the map $x\mapsto Q_x$ can be chosen to be smooth at least locally, where $Q_x$ is an orthonormal basis of $\T_x \M$. Since $\N_x \M = \T_x^{\perp} \M$,  a smooth orthonormal basis $B_x$ can be obtained, i.e. the map $x\mapsto B_x$ is smooth in a neighborhood of $x$ for any $x \in \M$. We will see in Section~\ref{sec:RPN}, that such a characterization of $\T_x \M$ is useful in reformulating Riemannian proximal mappings. 

The Riemannian gradient of a smooth function $f: \M \to \R$ at $x$, denoted by $\grad f(x)$, is the unique tangent vector satisfying
\begin{equation} \label{eq:04}
\mathrm{D} f(x)[\eta_x] = \left\langle \eta_x, \grad f(x)\right\rangle_x,\ \forall \eta_x\in \T_x \M,
\end{equation}
where $\langle \cdot, \cdot \rangle_x$ is the Riemannian metric on $\T_x \M$ and $\mathrm{D} f(x)[\eta_x]$ is the directional derivative of $f$ at $x$ along $\eta_x$. If $\M$ is a Riemannian submanifold of $\R^n$, then Riemannian gradient $\grad f(x)$ has the following explicit statement, 
\[
\grad f(x) =  \Proj_x \left(\nabla f(x) \right),
\]
where $\nabla f(x)$ is the Euclidean gradient, $\Proj_x$ denotes the orthogonal projector onto $\T_x \M$, and the orthogonality is defined by the Euclidean metric. 

The Riemannian Hessian of $f$ at $x$, denoted by $\Hess f(x)$, is a linear operator on $\T_x \M$ satisfying
\[
\Hess f(x)[\eta_x] = \Bar{\nabla}_{\eta_x} \grad f(x),\ \forall \eta_x \in \T_x \M ,
\]
where $\Hess f(x)[\eta_x]$ denotes the action of $\Hess f(x)$ on a tangent vector $\eta_x$, and $\Bar{\nabla}$ denotes the Riemannian affine connection, see~\cite[Section~5.3]{absil2009} and~\cite[Section~5.4]{
boumal2023}. Roughly speaking, an affine connection generalizes the concept of a directional derivative of a vector field. Furthermore, if $\M$ is a Riemannian submanifold of $\R^n$, then the action of Riemannian Hessian $\Hess f(x)$ along the direction $\eta_x \in \T_x \M$ has the following explicit expression, see~\cite[Section~5.5 and~5.11]{
boumal2023}, i.e.,
\begin{equation}\label{eq:Rhess}
    \begin{aligned}
    \Hess f(x)[\eta_x] &= \Proj_x\left( \mathrm{D} \grad f(x) [\eta_x] \right)\\
    &= \Proj_x \left(\nabla^2 f(x) [\eta_x]\right) + \W_x\left(\eta_x, \Proj_x^{\perp} \big(\nabla f(x)\big) \right), 
    \end{aligned}
\end{equation}
where $\nabla^2 f(x)$ is the Euclidean Hessian, $\Proj_x^{\perp} = \mathrm{Id} - \Proj_x$ is the orthogonal projector to the normal space $\N_x \M = (\T_x \M)^{\perp}$, $\mathrm{Id}$ denotes the identity operator, $\W_x$ is the Weingarten map~\cite[Section~5.11]{boumal2023} defined by
\begin{equation}\label{2-3}
    \W_x:  \T_x \M \times  \N_{x} \M \to  \T_x \M:(w,u)\mapsto \W_x(w,u) = \D(x\mapsto \Proj_x)(x)[w] \cdot u, 
\end{equation}
where $\D(x\mapsto \Proj_x)(x)$ is the differential of the function $\Proj_x$ at $x$. The Weingarten map is related to the curvature of the manifold, see~\cite[Chapter~8]{lee2018introduction}.

A \emph{retraction} on a manifold $\M$ is a smooth mapping from the tangent bundle $\T\M$ to $\M$ such that  $(i) R_x (0_x) = x$, where $0_x$ is the zero vector in $\T_x \M$; $(ii)$ the differential of $R_x$ at $0_x$, denoted $\mathrm{D}R_x(0_x)$, is the identity map. Although the domain of a retraction does not necessarily need to be the whole tangent bundle, it is often the case in practice. Retractions whose domain is the whole tangent bundle are referred to as \emph{globally defined} retractions.
We denote $R_x : \T_x \M \to \M$ to be the restriction of $R$ to $\T_xM$. For any $x\in \M$, there always exists a neighborhood of $0_x$ such that $R_x$ is a diffeomorphism in the neighborhood.


\subsection{Preliminaries on Implicit Function Theorems}

The proposed proximal Newton method relies on an implicit function theorem for semismooth functions. In this section, the implicit function theorems for both smooth and semismooth functions are reviewed. We refer interested readers to \cite{dontchev2009,Gowda2004,krantz2002,Pang2003,Sun2001} for more details. Lemma \ref{lemm-2-1} states the well-known implicit function theorem for continuously differentiable functions.

\begin{lemma}[Implicit Function Theorem]\label{lemm-2-1}
Let $F:\mathbb{R}^{n+m} \to \mathbb{R}^{m}$ be a continuously differentiable (i.e., $\mathbb{C}^1$) function, and $F(x^0,y^0) = 0$. If the Jacobian matrix $J_y F(x^0,y^0)$ is invertible, then there exists an open set $U \subset \mathbb{R}^n$ containing $x^0$ such that there exists a unique $\mathbb{C}^1$ function $f: U\to \mathbb{R}^m$ such that $f(x^0) = y^0$, and $F(x, f(x)) = 0$ for all $x\in U$. Moreover, the Jacobian matrix of partial derivatives of $f$ in $U$ is given by the matrix product
$
Jf(x) = - [J_y F(x,y)]^{-1} J_x F(x,y).
$ 
\end{lemma}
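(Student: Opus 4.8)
The plan is to derive the statement from the contraction mapping principle (Banach fixed-point theorem), tracking a nested family of neighbourhoods. Write $A := J_y F(x^0,y^0)$, invertible by hypothesis, and define
\[
\Phi(x,y) := y - A^{-1} F(x,y).
\]
For fixed $x$, a point $y$ solves $F(x,y)=0$ if and only if it is a fixed point of $\Phi(x,\cdot)$. The key observation is that $\partial_y \Phi(x,y) = I - A^{-1} J_y F(x,y)$ vanishes at $(x^0,y^0)$; since $J_yF$ is continuous, there exist radii $r,\rho>0$ with $\|\partial_y \Phi(x,y)\| \le 1/2$ whenever $\|x-x^0\|\le\rho$ and $\|y-y^0\|\le r$. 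By the mean value inequality on the convex ball $\bar B(y^0,r)$, $\Phi(x,\cdot)$ is then a $\tfrac12$-contraction of $\bar B(y^0,r)$, uniformly over such $x$.

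Next I would ensure $\Phi(x,\cdot)$ maps $\bar B(y^0,r)$ into itself. Since $\Phi(x^0,y^0)=y^0$ and $\Phi$ is continuous, after shrinking $\rho$ we may assume $\|\Phi(x,y^0)-y^0\|\le r/2$ for $\|x-x^0\|\le\rho$; combined with the contraction estimate, $\|\Phi(x,y)-y^0\| \le \|\Phi(x,y)-\Phi(x,y^0)\| + \|\Phi(x,y^0)-y^0\| \le r/2+r/2 = r$ for all $y\in\bar B(y^0,r)$. Let $U$ be the open $\rho$-ball about $x^0$. By the Banach fixed-point theorem, for each $x\in U$ there is a unique $f(x)\in\bar B(y^0,r)$ with $\Phi(x,f(x))=f(x)$, hence $F(x,f(x))=0$; and $f(x^0)=y^0$ by uniqueness. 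Any continuous (in particular any $C^1$) solution passing through $(x^0,y^0)$ must coincide with $f$ near $x^0$, which gives the uniqueness assertion. Continuity of $f$ follows from the self-improving estimate
\[
\|f(x_1)-f(x_2)\| \le \tfrac12\|f(x_1)-f(x_2)\| + \|A^{-1}\|\,\|F(x_1,f(x_2))-F(x_2,f(x_2))\|,
\]
so $\|f(x_1)-f(x_2)\| \le 2\|A^{-1}\|\,\|F(x_1,f(x_2))-F(x_2,f(x_2))\| \to 0$ as $x_1\to x_2$.

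It remains to show $f\in C^1$ with the stated Jacobian. Because invertibility is an open condition and $J_yF$ is continuous, after further shrinking $U$ and $r$ we may assume $J_yF(x,y)$ is invertible on the relevant neighbourhood. Fix $x\in U$, a small increment $h$, and set $y=f(x)$, $y'=f(x+h)$. Differentiability of $F$ at $(x,y)$ gives $0 = F(x+h,y')-F(x,y) = J_xF(x,y)\,h + J_yF(x,y)(y'-y) + o(\|h\|+\|y'-y\|)$, so $y'-y = -[J_yF(x,y)]^{-1}J_xF(x,y)\,h + o(\|h\|+\|y'-y\|)$. The continuity of $f$ established above forces $\|y'-y\| = O(\|h\|)$, hence the remainder is $o(\|h\|)$ and $f$ is differentiable at $x$ with $Jf(x) = -[J_yF(x,y)]^{-1}J_xF(x,y)$. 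Finally $x\mapsto Jf(x)$ is continuous, being a composition of $x\mapsto(x,f(x))$, $(x,y)\mapsto (J_xF(x,y),J_yF(x,y))$, and matrix inversion — continuous on invertible matrices — so $f$ is $C^1$.

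The main obstacle is the last step's bookkeeping: one must first secure $\|f(x+h)-f(x)\| = O(\|h\|)$ and justify inverting $J_yF$ at points near (not only at) $(x^0,y^0)$ before the $o$-estimate closes. An equivalent, slicker route bypasses this via the inverse function theorem: the map $G(x,y) := (x,F(x,y))$ has Jacobian $\left(\begin{smallmatrix} I & 0 \\ J_xF & J_yF \end{smallmatrix}\right)$ at $(x^0,y^0)$, invertible since $J_yF$ is, so $G$ admits a local $C^1$ inverse of the form $G^{-1}(x,z)=(x,g(x,z))$; then $f(x):=g(x,0)$ is $C^1$ with $F(x,f(x))=0$ and $f(x^0)=y^0$, and differentiating $F(x,f(x))=0$ by the chain rule recovers the Jacobian formula. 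I would present the contraction-mapping argument as primary since it is self-contained.
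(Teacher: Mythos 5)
Your contraction-mapping proof is correct, but note that the paper does not prove Lemma~\ref{lemm-2-1} at all: it is quoted as the classical implicit function theorem with citations to standard references, and the paper's own effort goes into the semismooth analogue (Corollary~\ref{coro-2-1}), proved in Appendix~\ref{appA}. So you have supplied a self-contained argument where the paper relies on the literature. Your argument is the standard Banach fixed-point proof and is sound: the $\tfrac12$-contraction and self-mapping estimates are right, and the displayed self-improving bound $\|f(x_1)-f(x_2)\| \le 2\|A^{-1}\|\,\|F(x_1,f(x_2))-F(x_2,f(x_2))\|$ in fact gives local Lipschitz continuity of $f$ (since $\|J_xF\|$ is bounded on a compact neighbourhood), which is what you need for $\|f(x+h)-f(x)\|=O(\|h\|)$ in the differentiability step; saying that mere ``continuity of $f$'' forces this is slightly off, but you flag and repair exactly this bookkeeping point yourself. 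One further small imprecision: uniqueness holds among solutions with values in $\bar B(y^0,r)$ (equivalently, for continuous solutions near $x^0$ by a connectedness argument), not literally among all functions on $U$; this matches the usual statement. It is worth observing that your ``slicker route'' via $G(x,y)=(x,F(x,y))$ and the inverse function theorem is precisely the strategy the paper employs in Appendix~\ref{appA} to derive the semismooth implicit function theorem from the G-semismooth inverse function theorem (Lemma~\ref{G:inv}), so your alternative sketch is the one most consonant with the paper, while your primary contraction argument buys self-containedness and does not presuppose an inverse function theorem.
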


However, in many applications, we encounter functions that are not differentiable everywhere, which leads to the following definition of semismoothness.

\begin{definition}[Semismoothness~\cite{Li2018,Qi1993}]\label{def-2-3}
Let $\mathcal{D} \subset \R^n$ be an open set, $\mathcal{K}: \mathcal{D} \rightrightarrows \R^{m\times n}$ be a nonempty and compact valued, upper semicontinuous set-valued mapping, and let  $F:\mathcal{D} \to \mathbb{R}^m$ be a locally Lipschitz continuous function. We say that $F$ is semismooth at $x\in \mathcal{D}$ with respect to $\mathcal{K}$ if
\begin{itemize}
    \item[(a)] $F$ is directionally differentiable at $x$; and
    \item[(b)] for any $J \in \mathcal{K}(x+d)$,
    \[
    F(x+d) - F(x) - J d = \smallO(\|d\|) \ \mathrm{as}\ d\to 0.
    \]
\end{itemize}
Furthermore, $F$ is said to be strongly semismooth at $x\in \mathcal{D}$ with respect to $\mathcal{K}$ if $F$ is semismooth at $x\in \mathcal{D}$ and for any $J \in \mathcal{K}(x+d)$,
    \[
    F(x+d) - F(x) - J d = \mathcal{O}(\|d\|^2) \ \mathrm{as}\ d\to 0.
    \]
If $F$ is (strongly) semismooth at any $x \in \mathcal{D}$ with respect to $\mathcal{K}$, then $F$ is called a (strongly) semismooth function with respect to $\mathcal{K}$.
\end{definition}

Commonly-encountered semismooth functions include smooth functions, piecewise smooth functions, and convex functions. For every $p \in [1, \infty]$, the norm $\|\cdot\|_p$ is strongly semismooth. Piecewise affine functions are strongly semismooth. Note that the proximal mapping  of $\ell_1$ norm $\|\cdot\|_1$, i.e., $\prox_{\|\cdot\|_1}$ is component-wise separable and piecewise affine, then $\prox_{\|\cdot\|_1}$ is strongly semismooth.
It has been shown in~\cite{Qi1993} that the corresponding set-valued mapping $\mathcal{K}$ can be chosen as the B(ouligand)-subdifferential or the Clarke subdifferential, i.e.,
\[
\mathcal{K}:\mathcal{D} \mapsto \mathbb{R}^{m \times n}: x \mapsto \partial_{\mathrm{B}} F(x) \quad \hbox{ or } \quad \mathcal{K}:\mathcal{D} \mapsto \mathbb{R}^{m \times n}: x \mapsto \partial F(x),
\]
where $\partial_{\mathrm{B}} F(x)$ and $\partial F(x)$ respectively denote the B(ouligand)-subdifferential and the Clarke's subdifferential, and the definitions of B(ouligand)-subdifferential and the Clarke's subdifferential can be found in \cite{clarke1990} and we state them in Definition~\ref{def:B-C-subdiff} for completeness.

\begin{definition}[B-subdifferential and Clarke's subdifferential] \label{def:B-C-subdiff}
Let $F:\mathcal{D} \subset \mathbb{R}^n \rightarrow \mathbb{R}^m$ be a locally Lipschitz continuous function, where $\mathcal{D}$ is an open subset of $\mathbb{R}^n$. Let $\mathcal{D}_F$ denote the set of the points in $\mathcal{D}$ such that $F$ is differentiable at any $x \in \mathcal{D}_F$\footnote{Note that by Rademacher's theorem, any locally Lipschitz continuous function is differentiable almost everywhere in its domain. In other words, $\mathcal{D}_F$ is a dense subset of $\mathcal{D}$ in the sense of the Lebesgue measurement.}. The B-subdifferential of $F$ at $x \in \mathcal{D}$ is defined by
\[
\partial_{\mathrm{B}} F(x) = \left\{\lim_{k\to +\infty} J F(x_k): x_k \in \mathcal{D}_F, \lim_{k\to +\infty} x_k  = x\right\}.
\]
The Clarke's subdifferential of $F$ at $x \in \mathcal{D}$ is defined as the convex hull of $\partial_{\mathrm{B}} F(x)$, i.e., $\partial F(x) = \mathrm{conv}\{\partial_{\mathrm{B}} F(x)\}$.
\end{definition}

In~\cite[Theorem~4]{Gowda2004}, the author proposed an implicit function theorem for semismooth function, however, the semismoothness in~\cite{Gowda2004} is weaker than the semismoothness in Definition~\ref{def-2-3}. To distinguish this kind of semismoothness, the authors in~\cite{Pang2003} called semismoothness in~\cite{Gowda2004} to be G-semismoothness, where it is defined as follows. 

\begin{definition}[G-semismoothness~\cite{Gowda2004, Pang2003}]
    Let $F:\mathcal{D} \to \mathbb{R}^m$ where $\mathcal{D} \subset \R^n$ be an open set, $\mathcal{K}: \mathcal{D} \rightrightarrows \R^{m\times n}$ be a nonempty set-valued mapping. We say that $F$ is G-semismooth at $x\in \mathcal{D}$ with respect to $\mathcal{K}$ if for any $J \in \mathcal{K}(x+d)$,
    \[
    F(x+d) - F(x) - J d = \smallO(\|d\|) \ \mathrm{as}\ d\to 0.
    \]
    Furthermore, $F$ is said to be G-strongly semismooth at $x\in \mathcal{D}$ with respect to $\mathcal{K}$ if $F$ is G-semismooth at $x\in \mathcal{D}$ and for any $J \in \mathcal{K}(x+d)$,
    \[
    F(x+d) - F(x) - J d = \mathcal{O}(\|d\|^2) \ \mathrm{as}\ d\to 0.
    \]
If $F$ is G-(strongly) semismooth at any $x \in \mathcal{D}$ with respect to $\mathcal{K}$, then $F$ is called a G-(strongly) semismooth function with respect to $\mathcal{K}$.
\end{definition}

Next, we give the semismooth implicit function theorem that it is used later to prove the local quadratic convergence result. It is a rearrangement of~\cite[Theorem~4]{Gowda2004} and~\cite[Theorem 6]{Pang2003}. The proofs are given in Appendix~\ref{appA}. 

\begin{corollary}[Semismooth Implicit Function Theorem]\label{coro-2-1}
 Suppose that $F: \mathbb{R}^n\times \mathbb{R}^m \to \mathbb{R}^m$ is a (strongly) semismooth function with respect to $\partial_{\mathrm{B}} F$ in an open neighborhood of $(x^0,y^0)$ with $F(x^0,y^0) = 0$. Let $H(y) = F(x^0, y)$. If every matrix in $\partial H(y^0)$ is nonsingular, then there exists an open set $\mathcal{V} \subset \mathbb{R}^n$ containing $x^0$, a set-valued function $\mathcal{K}:\mathcal{V} \rightrightarrows \mathbb{R}^{m \times n}$, and a G-(strongly) semismooth function $f:\mathcal{V} \to \mathbb{R}^m$ with respect to $\mathcal{K}$ 
satisfying $f(x^0) = y^0$, 
\[
F\left(x, f(x)\right) = 0
\]
for every $x\in \mathcal{V}$ and the set-valued function $\mathcal{K}$ is
\[
\mathcal{K}:x \mapsto \left\{-(A_y)^{-1} A_x: [A_x\ A_y] \in \partial_{\mathrm{B}} F \big(x,f(x)\big)\right\},
\]
where the map $x\mapsto \mathcal{K}(x)$ is compact valued and upper semicontinuous. 
\end{corollary}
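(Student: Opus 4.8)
The plan is to combine the smooth implicit function theorem (Lemma~\ref{lemm-2-1}) with a limiting/subsequence argument to transfer the nonsingularity at $(x^0, y^0)$ to a full neighborhood, and then verify the G-semismoothness estimate directly from the semismoothness of $F$. The strategy is essentially the one behind~\cite[Theorem~4]{Gowda2004} and~\cite[Theorem~6]{Pang2003}, adapted to the $\partial_{\mathrm{B}}$-based formulation here.

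First I would establish the existence and (local) uniqueness of the implicit function $f$. Since every matrix in $\partial H(y^0) = \partial_y F(x^0, y^0)$ is nonsingular, an upper-semicontinuity argument on $\partial_{\mathrm{B}} F$ gives a neighborhood of $(x^0, y^0)$ on which every candidate partial Jacobian block $A_y$ appearing in $\partial_{\mathrm{B}} F(x,y)$ is nonsingular, with uniformly bounded inverse (here one uses compactness of $\partial_{\mathrm{B}} F$ values together with the fact that nonsingularity is an open condition on a compact set). On this neighborhood, I would invoke a nonsmooth implicit function / inverse function result — or more elementarily, approximate $F$ near differentiability points and pass to the limit through Rademacher's theorem — to produce the continuous, locally unique $f: \mathcal{V} \to \R^m$ with $f(x^0) = y^0$ and $F(x, f(x)) = 0$. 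Continuity of $f$ follows from the uniform invertibility bound plus local Lipschitzness of $F$; shrinking $\mathcal{V}$ keeps $(x, f(x))$ inside the good neighborhood.

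Next I would define $\mathcal{K}(x) = \{-(A_y)^{-1} A_x : [A_x\ A_y] \in \partial_{\mathrm{B}} F(x, f(x))\}$ and check its claimed properties. Compact-valuedness is immediate since $\partial_{\mathrm{B}} F(x, f(x))$ is compact and $[A_x\ A_y] \mapsto -(A_y)^{-1} A_x$ is continuous on the set where $A_y$ is invertible. Upper semicontinuity of $x \mapsto \mathcal{K}(x)$ follows by composing upper semicontinuity of $\partial_{\mathrm{B}} F$ (standard for B-subdifferentials), continuity of $f$, and continuity of the inversion map, using the uniform bound on $\|A_y^{-1}\|$ so no blow-up occurs. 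For the G-semismoothness estimate: given $x \in \mathcal{V}$ and a small perturbation $d$, pick $J = -(A_y)^{-1} A_x \in \mathcal{K}(x+d)$ with $[A_x\ A_y] \in \partial_{\mathrm{B}} F(x+d, f(x+d))$. Writing $0 = F(x+d, f(x+d)) - F(x, f(x))$ and applying the semismoothness of $F$ at $(x, f(x))$ with increment $(d, f(x+d) - f(x))$ gives
\[
A_x d + A_y\big(f(x+d) - f(x)\big) = \smallO\!\left(\|d\| + \|f(x+d) - f(x)\|\right)
\]
(respectively $\mathcal{O}((\|d\| + \|f(x+d)-f(x)\|)^2)$ in the strong case). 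Solving for $f(x+d) - f(x)$ and using the uniform invertibility bound yields $f(x+d) - f(x) - Jd = \smallO(\|d\| + \|f(x+d)-f(x)\|)$; a short bootstrapping step (the $\smallO$ term absorbs $\|f(x+d)-f(x)\|$ for $d$ small, so $\|f(x+d)-f(x)\| = \mathcal{O}(\|d\|)$, since $\|Jd\| = \mathcal{O}(\|d\|)$) then converts this into $f(x+d) - f(x) - Jd = \smallO(\|d\|)$, as required; the strong-semismooth case is identical with $\smallO(\|d\|)$ replaced by $\mathcal{O}(\|d\|^2)$.

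**The main obstacle** I anticipate is the first step: getting existence/uniqueness and continuity of $f$ from only a $\partial_{\mathrm{B}}$-nonsingularity hypothesis, rather than the stronger Clarke-subdifferential hypothesis used in some versions of the nonsmooth implicit function theorem. The delicate point is that $\partial H(y^0) = \partial_y F(x^0, y^0)$ need not coincide with the projection of $\partial F(x^0, y^0)$ onto the $y$-block, so one must be careful that nonsingularity of all matrices in $\partial H(y^0)$ actually propagates to a usable uniform bound on the relevant Jacobian blocks in a neighborhood; this is where the careful upper-semicontinuity/compactness bookkeeping (and possibly a reduction to differentiability points via Rademacher) does the real work, and it is precisely the content of~\cite[Theorem~4]{Gowda2004} and~\cite[Theorem~6]{Pang2003} that we are repackaging. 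The remaining parts — compact-valuedness, upper semicontinuity of $\mathcal{K}$, and the G-semismoothness estimate — are then routine once the uniform invertibility is in hand.
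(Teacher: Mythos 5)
Your first step---producing the implicit function $f$ at all---is the entire content of the corollary, and the argument you sketch for it does not go through. The hypothesis is nonsingularity of every matrix in $\partial H(y^0)$, where $H$ is the \emph{section} $y \mapsto F(x^0,y)$; this set does not control the $y$-blocks $A_y$ with $[A_x\ A_y] \in \partial_{\mathrm{B}} F(x,y)$ near $(x^0,y^0)$ (for the Clarke subdifferential the partial Jacobian is only \emph{contained} in the projection of the full Jacobian, and for the B-subdifferential neither inclusion holds in general), so no upper-semicontinuity/compactness argument can upgrade the hypothesis to ``every candidate block $A_y$ appearing in $\partial_{\mathrm{B}} F(x,y)$ near $(x^0,y^0)$ is nonsingular with uniformly bounded inverse.'' You correctly identify this as the delicate point, but your resolution is either to ``invoke a nonsmooth implicit function / inverse function result''---which is circular, since that is the statement being proved---or to declare that the content of \cite[Theorem~4]{Gowda2004} and \cite[Theorem~6]{Pang2003} is being ``repackaged'' without carrying out any reduction to those results. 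The alternative route you announce at the outset (the smooth implicit function theorem, Lemma~\ref{lemm-2-1}, at nearby differentiability points plus a Rademacher limiting argument) is likewise not executed and faces the same unjustified uniform-invertibility claim, in addition to a nontrivial uniform-domain issue for the local implicit functions one would need to patch together.

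For contrast, the paper's proof never argues via invertibility of the $A_y$ blocks: it is degree-theoretic. Lemma~\ref{lemm-2-2} (from \cite{Pang2003}) converts nonsingularity of $\partial H(y^0)$ into a constant-sign/topological-index condition for $H$; one then forms the augmented map $\mathcal{F}(x,y) = (x, F(x,y))$, verifies the orientation and index conditions for it (following \cite{Gowda2004}), and applies the G-semismooth inverse function theorem (Lemma~\ref{G:inv}) to obtain a locally Lipschitz, G-(strongly) semismooth local inverse $\mathcal{H}$; the implicit function is read off as $f = \mathcal{P}_2 \circ \mathcal{H} \circ \phi$ with $\phi(x) = (x,0)$, and the formula for $\mathcal{K}$ falls out of the block inversion. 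Your second half---deriving the G-semismoothness estimate for $f$ directly from the (strong) semismoothness of $F$ with the bootstrapping step that absorbs $\|f(x+d)-f(x)\|$---is a legitimate alternative to the paper's composition argument, but it only becomes available once existence and continuity of $f$ and invertibility (with uniform bounds) of the relevant $A_y$'s near $(x^0,y^0)$ are already established, and those are exactly what is missing from your proposal.
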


Note that the set-valued function $\mathcal{K}$ in Corollary~\ref{coro-2-1} is not necessarily $\partial_{\mathrm{B}} f(x)$ nor $\partial f(x)$.

\section{A Riemannian Proximal Newton Method}\label{sec:RPN}

In this section, the proposed Riemannian proximal Newton method is described in detail and the local quadratic convergence rate is established.

\subsection{Algorithm Interpretation} \label{sec:RPNdescription}

We propose a Riemannian proximal Newton (RPN) method stated in Algorithm~\ref{alg:RPN}. Each iteration of RPN consists of three phases: the computation of a Riemannian proximal gradient direction, the Riemannian Newton modification of the step direction, and finally, the retraction on the manifold constraint.

\begin{algorithm}
\caption{A Riemannian proximal Newton method (RPN)}
\label{alg:RPN}
\begin{algorithmic}[1] 
\Require A $(n-d)$-dimensional embedded submanifold of $\mathbb{R}^n$, $x_0 \in \mathcal{M}$, $t > 0$;
\For{$k = 0,1,\dots$}
\State \label{alg:RPN:st01} Compute $v(x_k)$ by solving 
\begin{equation} \label{eq:subforv}
v(x_k) = \argmin_{v \in \T_{x_k} \M}\ f(x_k) + \nabla f(x_k)^{\T} v + \frac{1}{2t} \|v\|_{\F}^2 + h(x_k + v).
\end{equation}
\State \label{alg:RPN:st02} Find $u(x_k) \in \T_{x_k}\M$ by solving
\begin{equation}\label{3-3}
    J(x_k) [u(x_k)] = - v(x_k),
\end{equation}
where 
\begin{equation}\label{eq:Jsol}
J(x_k) = -\left[\I_n - \Lambda_{x_k} + t\Lambda_{x_k} (\nabla^2 f(x_k) - \mathcal{L}_{x_k})\right],
\end{equation}
$\Lambda_{x_k} = M_{x_k} -  M_{x_k} B_{x_k} H_{x_k} B_{x_k}^{\T} M_{x_k}$, $H_{x_k} = \left(B_{x_k}^{\T} M_{x_k} B_{x_k}\right)^{-1}$, $B_{x_k}$ is an orthonormal basis of $\N_{x_k} \mathcal{M}$, $\mathcal{L}_{x_k}(\cdot) = \W_{x_k}\big(\cdot,B_{x_k} \lambda(x_k)\big)$, $\W_{x_k}$ denotes the Weingarten map~\eqref{2-3}, $\lambda(x_k)$ is the Lagrange multiplier~\eqref{3-1} at $x_k$, and $M_{x_k}$ is a diagonal matrix defined in~\eqref{eq:SpecialM}.
\State $x_{k+1} = R_{x_k}\left( u(x_k)\right)$;
\EndFor
\end{algorithmic}
\end{algorithm}

Firstly, in Step~\ref{alg:RPN:st01}, we compute the Riemannian proximal gradient direction $v(x_k)$ as done in~\cite{2020Proximal}. It has been shown that $v(x_k)$ can be efficiently computed by applying a semismooth Newton method to the KKT conditions of the optimization problem in~\eqref{eq:subforv}. Specifically, the KKT condition for Problem~\eqref{eq:subforv} is given by
\begin{equation}\label{eq:KKT01}
\partial_v \mathscr{L}_k(v, \lambda) = 0, \hbox{ and } B_{x_k}^T v = 0, 
\end{equation}
where we use the fact that $v \in \T_{x_k} \mathcal{M}$ is equivalent to $B_{x_k}^T v = 0$, $\mathscr{L}_k$ is the Lagrange function given by $\mathscr{L}_k(v, \lambda) = f(x_k) + \nabla f(x_k)^{\T} v + \frac{1}{2t} \|v\|_{\F}^2 + h(x_k + v) + \lambda^T B_{x_k}^{\T} v$, and $\lambda \in \mathbb{R}^d$ denotes the Lagrange multiplier. It follows from~\eqref{eq:KKT01} that
\begin{equation}\label{3-1}
v=\prox_{t h} \bigl( x_k - t \left[\nabla f(x_k) + B_{x_k} \lambda \right] \bigr) - x_k, \hbox{ and } B_{x_k}^{\T} v = 0,
\end{equation}
where $ \mathrm{prox}_{t h}(z) $ denotes the proximal mapping of $t h$, i.e., 
\begin{equation} \label{eq:EProx}
\mathrm{prox}_{t h}(z) = \argmin_{x \in \mathbb{R}^n} \frac{1}{2} \|x - z\|^2 + t h(x) = \max(|z| - t \mu, 0) \odot \sign(z).
\end{equation}
From~\eqref{3-1}, we have an equation of $\lambda$ given by
\begin{equation} \label{eq:Psilambda}
B_{x_k}^T \left( \prox_{t h} \bigl( x_k - t \left[\nabla f(x_k) + B_{x_k} \lambda \right] \bigr) - x_k \right) = 0,
\end{equation}
which can be solved efficiently by a semismooth Newton method and the resulting $v(x_k)$ is obtained by the first equation of~\eqref{3-1}.

Step~\ref{alg:RPN:st02} is the main innovation of our proposed RPN compared to the first-order method of~\cite{2020Proximal} and it is the Riemannian analogue of the semismooth Newton method applied to $f(x) + \mu \|x\|_1$ in the Euclidean setting~\cite{Xiao2018}. For smooth optimization problems, the direction $v(x_k)$ plays the same role as the negative Riemannian gradient. To see that, if the nonsmooth term $h$ is the zero function, i.e., $h(x) \equiv 0$, and $t$ is chosen to be one, then $v(x_k)$ is the negative Riemannian gradient, i.e., $v(x_k) = - \grad f(x_k)$. The Newton direction is given by solving the Newton equation
\[
\Hess f(x_k) [\eta_k] = - \grad f(x_k),
\]
which can be written in terms of $v(x_k)$, i.e.,
\begin{equation}\label{smooth_sear}
\Proj_{x_k} (\mathrm{D} v(x_k) [\eta_k]) = - v(x_k),
\end{equation}
since $v(x_k) = - \grad f(x_k)$. The same idea can be used for $v(x_k)$ given by~\eqref{eq:subforv}. However, $v(x_k)$ is not a smooth function of $x_k$. In Subsections~\ref{sec:RPNsemiv}, \ref{sec:RPNgenJv} and \ref{sec:RPNlocal}, we will show that given a local optimal point $x_*$, (i) $v(x)$ is a G-strongly semismooth function of $x$ in a neighborhood of $x_*$, (ii) the linear operator $J(x):\T_{x}\M \to \T_x \M$ in~\eqref{eq:Jsol} is motivated by a generalized Jacobian of $v(x)$, and (iii) the direction $u(x)$ given by~\eqref{3-3} is a quadratic convergence direction.

\subsection{G-strongly semismoothness of $v(x)$} \label{sec:RPNsemiv}

The first step of the analysis is to prove the G-strongly semismoothness of the term $v(x_k)$, which herein we denote as $v(x)$ for a concise notation. The G-strongly semismoothness of $v(x)$ is proven by verifying the assumptions of the semismooth implicit function theorem in Corollary~\ref{coro-2-1} at $x_*$. Define the function
\[
\mathcal{F}: \mathbb{R}^n \times \mathbb{R}^{n + d} \mapsto \mathbb{R}^{n + d}: (x; v, \lambda) \mapsto 
\begin{pmatrix}
 v + x - \prox_{t h} \bigl( x - t [\nabla f(x) + B_x \lambda ] \bigr)\\
B_x^{T} v 
\end{pmatrix}.
\]
Since $\mathcal{F}$ is piecewise smooth, it follows from~\cite[Proposition 2.26]{ulbrich2011} 
that
\begin{equation} \label{eq:semicalF}
\mathcal{F} \hbox{ is a strongly semismooth function with respect to } \partial_{\mathrm{B}} \mathcal{F}.  
\end{equation}
Let $x_*$ be a local optimal point. It has been shown in~\cite[Lemma~5.2]{2020Proximal} that the solution $v(x)$ of~\eqref{eq:subforv} is a strict decent direction. Hence 
\begin{equation}\label{v_*}
    v(x_*) = 0.
\end{equation}

It follows from the KKT condition~\eqref{3-1} that
\begin{equation} \label{eq:calFeq0}
\mathcal{F}(x_*; v(x_*), \lambda(x_*)) = 0,
\end{equation}
where $\lambda(x_*)$ also is the solution of~\eqref{eq:Psilambda} at $x_*$. We use $v_*$ and $\lambda_*$ to respectively denote $v(x_*)$ and $\lambda(x_*)$ for simplicity. It follows that $\mathcal{F}(x_*; v_*, \lambda_*) = 0$. 

Define
\[
\begin{aligned}
\mathcal{G}&:\mathbb{R}^{n+d}\to \mathbb{R}^{n+d} \\
&: (v,\lambda) \mapsto \mathcal{G}(v, \lambda) = \mathcal{F}(x_*;v,\lambda) = \begin{pmatrix}
 v + x_* - \prox_{t h} \bigl( x_* - t[\nabla f(x_*) + B_{x_*} \lambda]\bigr)\\
B_{x_*}^{T} v 
\end{pmatrix}.
\end{aligned}
\]
Next, we will show that under a certain reasonable assumption, any entry in $\partial \mathcal{G}(v_*,\lambda_*)$ is nonsingular. Since the proximal mapping $\prox_{t h}$ is given by~\eqref{eq:EProx}, it follows from the chain rule~\cite[Theorem~2.3.9]{clarke1990} that the Clarke subdifferential of $\mathcal{G}$ is given by
\begin{equation} \label{eq:partialG}
\partial \mathcal{G}(v_*,\lambda_*) = \left\{ \begin{bmatrix} I_n
 & t M B_{x_*} \\
B_{x_*}^{\T} & 0_d  
\end{bmatrix}: M \in \partial  \prox_{t h} \bigl( x_* - t [\nabla f(x_*)  + B_{x_*} \lambda_*]\bigr)\right\},
\end{equation}
where 
\[
\begin{aligned}
\partial \prox_{t h} (z) = \{ M \in \mathbb{R}^{n \times n} \hbox{ is diagonal}: &M_{ii} = 1 \hbox{ if } |z_i| > t \mu, M_{ii} = 0 \hbox{ if } |z_i| < t \mu,  \\ 
&\left. \hbox{ and } M_{i i} \in [0, 1] \hbox{ otherwise.} \right\}.
\end{aligned}
\]
We consider a representative matrix in $\partial \prox_{t h}(x - t [\nabla f(x)  + B_{x} \lambda(x)])$, denoted by $M_{x}$, i.e.,
\begin{equation} \label{eq:SpecialM}
(M_{x})_{i i} = 
\left\{
\begin{array}{cc}
    0 & \hbox{ if $|x - t [\nabla f(x)  + B_{x} \lambda(x)]|_i \leq t \mu$; } \\
    1 & \hbox{ otherwise.}
\end{array}
\right.
\end{equation}
We let $j$ denote the number of nonzero entries in $M_{x_*}$ and, without loss of generality, we assume that the nonzero entries of $M_{x_*}$ lie on the left upper corner of the matrix, i.e.,
\begin{equation} \label{eq:03}
M_{x_*} = \begin{bmatrix}
 \I_j & \\
 & 0_{n-j} 
\end{bmatrix}.
\end{equation}
The nonsingularity of all entries in $\partial \mathcal{G}(v_*, \lambda_*)$ relies on Assumption~\ref{assu-3-1}. In addition, we provide an alternative interpretation for Assumption~\ref{assu-3-1}, see Remark~\ref{con_3_21}.

\begin{assumption}\label{assu-3-1}
 Let $B_{x_*}^{\T} = [\bar{B}_{x_*}^{\T}, \hat{B}_{x_*}^{\T}] $, where $\bar{B}_{x_*} \in \mathbb{R}^{j \times d}$ and $\hat{B}_{x_*} \in \mathbb{R}^{(n-j)\times d}$. It is assumed that $j \geq d$ and $\bar{B}_{x_*}$ is full column rank.
\end{assumption}

 In view of Lemma~\ref{lemm_Mx}, it follows from $v(x_*)=0$ of~\eqref{v_*} that if  the matrix $M_{x_*}$ is in the form of~\eqref{eq:03}, then $x_*$ can be written in the form of  $x_* = [\bar{x}_*^T, 0^T]^T$, where any entries in $\bar{x}_* \in \R^{j}$ are nonzero. Therefore,
if the manifold $\M$ is the unit sphere $\mathbb{S}^{n-1}$, then $d = 1$ and $B_{x_*} = x_*$ with $\bar{B}_{x_*} = \bar{x}_*$. Since $x_* \in \mathbb{S}^{n-1}$ with $x_* \ne 0$, then $j \geq 1$ and $\bar{B}_{x_*}$ is full column rank, i.e., Assumption~\ref{assu-3-1} holds for the unit sphere $\mathbb{S}^{n-1}$. It also can be shown that Assumption~\ref{assu-3-1}holds for the oblique manifold $\mathrm{O B}(p,n) = (\mathbb{S}^{n-1})^p$.

\begin{lemma}\label{lemm_Mx}
Suppose that $v$ is the solution of~\eqref{eq:subforv} at $x$, then $(x + v)_i = 0$ if and only if $(M_x)_{i i} = 0$, and $(x + v)_i \neq 0$ if and only if $(M_x)_{i i} = 1$, where $M_x$ is defined in~\eqref{eq:SpecialM}. 
\end{lemma}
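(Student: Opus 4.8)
The plan is to characterize the solution $v$ of the proximal subproblem \eqref{eq:subforv} via the first equation of the KKT system \eqref{3-1}, namely $v = \prox_{th}(x - t[\nabla f(x) + B_x\lambda(x)]) - x$, so that $x + v = \prox_{th}(z)$ with $z := x - t[\nabla f(x) + B_x\lambda(x)]$. Since $\lambda(x)$ is exactly the Lagrange multiplier whose existence and uniqueness are guaranteed by the preceding discussion around \eqref{eq:Psilambda}, this identity is legitimate and $z$ is precisely the argument appearing inside the absolute-value test in the definition \eqref{eq:SpecialM} of $M_x$. Thus the lemma reduces to a statement purely about the componentwise structure of the soft-thresholding operator $\prox_{th}$ evaluated at $z$, compared against the indicator $M_x$ built from the same $z$.

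Next I would invoke the explicit formula for the proximal mapping of $\ell_1$, namely \eqref{eq:EProx}: $(\prox_{th}(z))_i = \max(|z_i| - t\mu, 0)\,\sgn(z_i)$. Reading off componentwise: $(\prox_{th}(z))_i = 0$ precisely when $|z_i| \le t\mu$, and $(\prox_{th}(z))_i \ne 0$ precisely when $|z_i| > t\mu$. On the other hand, by the definition \eqref{eq:SpecialM}, $(M_x)_{ii} = 0$ exactly when $|z_i| \le t\mu$ and $(M_x)_{ii} = 1$ exactly when $|z_i| > t\mu$ (note $M_x$ is one of the representative elements of the Clarke subdifferential, so it only takes the values $0$ and $1$, never an intermediate value). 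Combining these two equivalences with $x + v = \prox_{th}(z)$ gives: $(x+v)_i = 0 \iff |z_i| \le t\mu \iff (M_x)_{ii} = 0$, and $(x+v)_i \ne 0 \iff |z_i| > t\mu \iff (M_x)_{ii} = 1$, which is exactly the claim.

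There is essentially no hard step here; the lemma is a bookkeeping observation that the threshold test governing the support of $\prox_{th}(z)$ is the very same test defining $M_x$. The only point that deserves a word of care is the boundary case $|z_i| = t\mu$: both $(\prox_{th}(z))_i$ and $(M_x)_{ii}$ are set to zero there by the stated conventions (the ``$\le$'' in \eqref{eq:SpecialM} and the ``$\max(\cdot,0)$'' in \eqref{eq:EProx}), so the two equivalences remain exact and the dichotomy $(M_x)_{ii}\in\{0,1\}$ is genuinely exhaustive. I would therefore write the proof as: substitute the KKT identity $x+v = \prox_{th}(z)$, apply \eqref{eq:EProx} componentwise, and match the resulting case split against \eqref{eq:SpecialM}.
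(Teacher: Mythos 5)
Your proposal is correct and follows essentially the same argument as the paper: substitute the KKT identity $x+v=\prox_{th}(z)$ with $z = x - t[\nabla f(x)+B_x\lambda(x)]$, apply the componentwise soft-thresholding formula \eqref{eq:EProx}, and match the case split $|z_i|\le t\mu$ versus $|z_i|>t\mu$ against the definition \eqref{eq:SpecialM} of $M_x$. Your explicit remark on the boundary case $|z_i|=t\mu$ is a welcome clarification but does not change the substance.
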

\begin{proof}
According to the first equality in~\eqref{3-1}, we have
\begin{equation}
x + v =\prox_{th} \bigl( x  - t[\nabla f(x) + B_x \lambda] \bigr),\label{3-9}
\end{equation}
where the $\prox_{t h}(z)$ is given by \eqref{eq:EProx} with $z = x  - t[\nabla f(x) + B_x \lambda ]$. 
Thus,
\begin{itemize}
    \item[(1)] if $(M_x)_{i,i} = 1$, then $|z_i|> t \mu$. It follows from~\eqref{3-9} that 
    $
    x_i + v_i = z_i - t\mu \mathrm{sgn}(z_i) \neq 0;
    $
    \item[(2)] if $(M_x)_{i,i} = 0$, then $|z_i|\le t\mu$. It follows from~\eqref{3-9} that 
    $
    x_i + v_i = 0.
    $
\end{itemize}
Therefore, $(x + v)_i = 0$ if and only if $(M_x)_{i i} = 0$, and $(x + v)_i \neq 0$ if and only if $(M_x)_{i i} = 1$.
\end{proof}

\begin{lemma}\label{lemm-3-1}
If Assumption~\ref{assu-3-1} holds, then every matrix in $\partial \mathcal{G}(v_*,\lambda_*)$ is invertible.
\end{lemma}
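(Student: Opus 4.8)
The plan is to use the block (saddle-point) structure of the matrices in $\partial\mathcal{G}(v_*,\lambda_*)$ exhibited in~\eqref{eq:partialG} and reduce the invertibility question to the nonsingularity of a $d\times d$ Schur complement. An arbitrary element of $\partial\mathcal{G}(v_*,\lambda_*)$ has the form $\begin{bmatrix}\I_n & tMB_{x_*}\\ B_{x_*}^{\T} & 0_d\end{bmatrix}$ with $M\in\partial\prox_{th}(z_*)$, where $z_* = x_* - t[\nabla f(x_*)+B_{x_*}\lambda_*]$. Since its top-left block $\I_n$ is invertible, such a matrix is nonsingular if and only if its Schur complement $-t\,B_{x_*}^{\T} M B_{x_*}$ is nonsingular; so it suffices to show $B_{x_*}^{\T} M B_{x_*}\succ 0$ for every admissible $M$.

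The first step I would carry out is to pin down the shape of an arbitrary $M\in\partial\prox_{th}(z_*)$. Because $v_*=0$ by~\eqref{v_*}, the first equation of~\eqref{3-1} gives $x_* = \prox_{th}(z_*)$, i.e., $x_{*,i} = \max(|z_{*,i}|-t\mu,0)\,\sgn(z_{*,i})$ for each $i$. By Lemma~\ref{lemm_Mx} together with~\eqref{eq:03}, the indices $i\le j$ are exactly those with $x_{*,i}\neq 0$; for such $i$ we must have $|z_{*,i}|-t\mu>0$, i.e., $|z_{*,i}|>t\mu$ strictly, so from the description of $\partial\prox_{th}$ the $i$-th diagonal entry of \emph{every} $M\in\partial\prox_{th}(z_*)$ equals $1$. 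For $i>j$ we only know $M_{ii}\in[0,1]$. Hence every admissible $M$ has the block-diagonal form $M=\diag(\I_j,\widetilde M)$ with $\widetilde M\in\R^{(n-j)\times(n-j)}$ diagonal and $0\preceq\widetilde M\preceq\I_{n-j}$; in particular $M\succeq 0$.

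Finally I would partition $B_{x_*}^{\T}=[\bar B_{x_*}^{\T}\ \hat B_{x_*}^{\T}]$ as in Assumption~\ref{assu-3-1} and use the block form of $M$ to compute $B_{x_*}^{\T} M B_{x_*} = \bar B_{x_*}^{\T}\bar B_{x_*} + \hat B_{x_*}^{\T}\widetilde M\,\hat B_{x_*}$. By Assumption~\ref{assu-3-1}, $\bar B_{x_*}$ has full column rank (which also forces $j\ge d$), so $\bar B_{x_*}^{\T}\bar B_{x_*}\succ 0$; and $\hat B_{x_*}^{\T}\widetilde M\,\hat B_{x_*}\succeq 0$ because $\widetilde M\succeq 0$. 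Therefore $B_{x_*}^{\T} M B_{x_*}$ is positive definite, hence nonsingular, and the claim follows. I expect the only delicate point to be the second step — correctly identifying which generalized Jacobians $M$ actually occur, in particular that the top $j$ diagonal entries are forced to be $1$ (this rests on $v_*=0$, Lemma~\ref{lemm_Mx}, and the resulting strict inequality $|z_{*,i}|>t\mu$); once the block structure of $M$ is available, the Schur-complement reduction and the positive-definiteness argument are routine linear algebra.
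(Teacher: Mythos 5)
Your proposal is correct and follows essentially the same route as the paper: both reduce invertibility of the saddle-point matrix $\begin{bmatrix}\I_n & tMB_{x_*}\\ B_{x_*}^{\T} & 0_d\end{bmatrix}$ to the nonsingularity of $B_{x_*}^{\T} M B_{x_*}$ (the paper exhibits the explicit block inverse, which is just your Schur-complement computation written out) and both deduce that nonsingularity from the full column rank of $\bar{B}_{x_*}$ together with the forced entries $M_{ii}=1$ on the support of $x_*$. If anything, your intermediate step---using $v_*=0$ and $x_*=\prox_{th}(z_*)$ to show $|z_{*,i}|>t\mu$, hence $M_{ii}=1$ for all $i\le j$ and every admissible $M$---is spelled out more carefully than the paper's ``without loss of generality'' normalization of $M$.
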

\begin{proof}

For any $M \in \partial  \prox_{t h} \bigl( x_* - t [\nabla f(x_*)  + B_{x_*} \lambda_*]\bigr)$, 
without loss of generality, assume that $M = \diag(s)$, where $s^{\T} = [1,\cdots,1,s_1,\cdots,s_{\ell},0,\cdots,0]$, where $s_i \in (0,1)$, $i=1\dots,\ell$, the number of element 1 is $j$. According to the partition of $M$, let $B_{x_*}^{\T} = [\bar{B}_{x_*}^{\T},\tilde{\bar{B}}_{x_*}^{\T}, \tilde{\hat{B}}_{x_*}^{\T}   ]$, where $\bar{B}_{x_*} \in \R^{j\times d}$,  $\tilde{\bar{B}}_{x_*} \in \mathbb{R}^{\ell\times d}$ and $\tilde{\hat{B}}_{x_*} \in \mathbb{R}^{(n-j-\ell)\times d}$.  Note that $B_{x_*}^T M B_{x_*} = \begin{bmatrix}
  \bar{B}_{x_*} \\  S\tilde{\bar{B}}_{x_*}
\end{bmatrix}^{\T} \begin{bmatrix}
  \bar{B}_{x_*} \\  S\tilde{\bar{B}}_{x_*}
\end{bmatrix}$, where $S = \diag(\sqrt{s_1},\cdots,\sqrt{s_{\ell}})$. Since $\bar{B}_{x_*}$ is full column rank by Assumption~\ref{assu-3-1}, then $\begin{bmatrix}
  \bar{B}_{x_*} \\  S\tilde{\bar{B}}_{x_*}
\end{bmatrix}$ is also full column rank, then 
$B_{x_*}^T M B_{x_*}$ is invertible.

For any $D \in \partial \mathcal{G}(v_*,\lambda_*)$, it follows from~\eqref{eq:partialG} that
\[
D = \begin{bmatrix} I_n
 & t M B_{x_*} \\
B_{x_*}^{\T} & 0_d  
\end{bmatrix}.
\]
One can verify that the matrix
\[
\begin{bmatrix}
\I_n - M B_{x_*} H_{x_*} B_{x_*}^{\T} & M B_{x_*} H_{x_*}\\
\frac1t H_{x_*} B_{x_*}^{\T}  & -\frac1t H_{x_*} 
\end{bmatrix},
\]
is the inverse of $D$, where $H_{x_*} = \left(B_{x_*}^{\T} M B_{x_*}\right)^{-1}$. Thus, every matrix in $\partial \mathcal{G}(v^*,\lambda_*)$ is invertible.
\end{proof}

It follows from~\eqref{eq:semicalF}, \eqref{eq:calFeq0}, and Lemma~\ref{lemm-3-1} that the assumptions of Corollary~\ref{coro-2-1} hold. As a consequence of Corollary~\ref{coro-2-1}, we have that there exists a neighborhood $\U$ of $x_*$ such that there exists a G-strongly semismooth function $S:\mathcal{U} \rightarrow \mathbb{R}^{n + d}:x \mapsto S(x) = (v(x), \lambda(x))$ with respect to $\mathcal{K}_S$ such that for every $x \in \mathcal{U}$,
\[
\mathcal{F}\left(x; S(x)\right) = 0,
\]
and the set-valued function $\mathcal{K}_S$ is
\[
\mathcal{K}_S: x \mapsto \left\{ - B^{-1} A : [A \; B] \in \partial_{\mathrm{B}} \mathcal{F}(x; S(x)) \right\}.
\]
Thus, $v:\mathcal{U} \rightarrow \mathbb{R}^n:x \mapsto v(x)$ is a G-strongly semismooth function with respect to $\K_v$, where
\begin{equation}\label{genJac_v}
\K_v:x \mapsto \left\{-[\I_n,\ 0]\cdot B^{-1} A: [A\ B] \in\partial_{\mathrm{B}} \mathcal{F} \big(x;S(x)\big)\right\}.
\end{equation}
Given $x \in \mathcal{U}$, any element of $\mathcal{K}_v(x)$ is called a generalized Jacobian of $v$ at $x$.

\subsection{The linear operator $J(x)$}  \label{sec:RPNgenJv} 

In this section, we derive a semismooth analogue of the Riemannian Newton direction. Recall that the classical smooth Riemannian Newton direction satisfies~\eqref{smooth_sear}, i.e., 
\[
\Proj_{x} (\mathrm{D} v(x) [\eta(x)]) = - v(x),
\]
where $\mathrm{D} v(x) [\eta(x)] = (J_x v)\eta_x$ involves the Jacobian of $v$ at $x$. For the nonsmooth case, the main task is to design a linear operator
$J(x): \T_x \M \rightarrow \T_x \M$, where $J(x)$ is related to the generalized Jacobian of $v$ at $x$.
To this end, we need to first compute a generalized Jacobian of $v$. For $x\in \mathcal{U}$, we select a matrix $[A\ B] \in \partial_{\mathrm{B}} \mathcal{F} \big(x,S(x)\big)$, that is 
\begin{equation*}
    A = \begin{bmatrix}
    I_n - M_{x} \big(I_n - t\nabla^2 f(x)\big) + t M_x (\mathrm{D} B_x) \lambda\\
    (\mathrm{D} B_x^{\T}) v  \end{bmatrix},\quad
    B =  \begin{bmatrix} I_n
    & t M_{x} B_{x} \\
    B_{x}^{\T} & 0_d  
    \end{bmatrix},
\end{equation*}
where $M_x$ is given in~\eqref{eq:SpecialM}. Therefore the generalized Jacobian has the following form
\begin{equation}
\begin{aligned}
\mathcal{J}_x &\overset{\eqref{genJac_v}}{=} - [\I_n,\ 0]\cdot B^{-1} A \\
&= -\Bigl[\I_n - M_x B_x H_x B_x^{\T}  - \Lambda_x (\I_n - t \nabla^2 f(x)) + t \Lambda_x (\mathrm{D}B_x) \lambda + M_x B_x H_x (\mathrm{D} B_x^{\T}) v \Big]\\
&= -\left[ \I_n - \Lambda_x + t\Lambda_x \left(\nabla^2 f(x) + (\mathrm{D}B_x) \lambda \right) \right]  -\left[M_x B_x H_x (\mathrm{D} B_x^{\T} )v - M_x B_x H_x B_x^{\T}\right],
\end{aligned}\label{3-6}
\end{equation}
where $\Lambda_x = M_x -  M_x B_x H_x B_x^{\T} M_x$ and $H_x = \left(B_{x}^{\T} M_{x} B_{x}\right)^{-1}$. 
Thus, $\mathcal{J}_x \in \mathcal{K}_v (x)$ is a generalized Jacobian of $v$ at $x$. For any $\omega \in \T_x \M$, the action of $\mathcal{J}_x$ is given by
\begin{equation}\label{eq:actionJx}
\mathcal{J}_x [\omega] = -\omega + \Lambda_x (\I_n - t \nabla^2 f(x))\omega - t \Lambda_x (\mathrm{D}B_x[\omega]) \lambda - M_x B_x H_x ( \mathrm{D}B_x^{\T}[\omega]) v.
\end{equation}
The differential of $B_{x}$ requires the information of an orthonormal basis of the normal space $\N_x \M$ at $x$, which may not be readily available. Lemma \ref{lemm-3-2} shows that the term including the differential of $B_{x}$ can be written in term of the Weingarten map.

\begin{lemma}\label{lemm-3-2}
Suppose that $B_x$ is an orthonormal basis of $\N_x \M$ with $\dim \N_x \M = d$. Then 
\[
\Lambda_x (\mathrm{D} B_x[\omega])\lambda = -\Lambda_x \mathcal{W}_x(\omega,B_x \lambda),
\]
where $\omega\in \T_x \M$, $\lambda \in \R^d$ and $\W_x$ denotes the Weingarten map defined in~\eqref{2-3}.
\end{lemma}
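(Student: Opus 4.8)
The plan is to differentiate the identity $B_x^{\T} B_x = I_d$ and the orthogonality relation $\Proj_x B_x = 0$ (equivalently $\Proj_x^{\perp} B_x = B_x$), and then recognize the Weingarten map on the right-hand side of the resulting expression. First I would write $B_x\lambda \in \N_x\M$ and compute $\mathrm{D}(x \mapsto \Proj_x^{\perp})(x)[\omega]\cdot(B_x\lambda)$ using the product rule on $\Proj_x^{\perp}B_x = B_x$. Since $\Proj_x^{\perp} = B_x B_x^{\T}$ for an orthonormal normal basis, differentiating gives
\[
\mathrm{D}B_x[\omega] = \big(\mathrm{D}\Proj_x^{\perp}[\omega]\big) B_x + \Proj_x^{\perp}\big(\mathrm{D}B_x[\omega]\big),
\]
so that $\big(I_n - \Proj_x^{\perp}\big)\mathrm{D}B_x[\omega] = \Proj_x\,\mathrm{D}B_x[\omega] = \big(\mathrm{D}\Proj_x^{\perp}[\omega]\big)B_x = -\big(\mathrm{D}\Proj_x[\omega]\big)B_x$. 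Applying this to $\lambda$ and using the definition~\eqref{2-3} of the Weingarten map, $\W_x(\omega, B_x\lambda) = \mathrm{D}(x\mapsto\Proj_x)(x)[\omega]\cdot(B_x\lambda)$, yields
\[
\Proj_x\big(\mathrm{D}B_x[\omega]\,\lambda\big) = -\W_x(\omega, B_x\lambda).
\]

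The remaining step is to show that $\Lambda_x$ acting on $\mathrm{D}B_x[\omega]\lambda$ only sees its tangential component, i.e., $\Lambda_x\,(\mathrm{D}B_x[\omega]\lambda) = \Lambda_x\,\Proj_x(\mathrm{D}B_x[\omega]\lambda)$. For this I would examine the structure $\Lambda_x = M_x - M_x B_x H_x B_x^{\T} M_x$ with $M_x$ the diagonal $0/1$ matrix of~\eqref{eq:SpecialM}. The normal component of $\mathrm{D}B_x[\omega]\lambda$ lies in $\sspan(B_x)$ (from the $\Proj_x^{\perp}(\mathrm{D}B_x[\omega])\lambda$ piece together with the fact that the normal part of the derivative of an orthonormal frame stays in the frame's span — this follows from differentiating $B_x^{\T}B_x = I_d$, which gives $B_x^{\T}\mathrm{D}B_x[\omega]$ skew-symmetric, hence $\Proj_x^{\perp}\mathrm{D}B_x[\omega] = B_x B_x^{\T}\mathrm{D}B_x[\omega] \in \sspan(B_x)$). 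A direct computation then shows $\Lambda_x B_x = M_x B_x - M_x B_x H_x (B_x^{\T}M_x B_x) = M_x B_x - M_x B_x H_x H_x^{-1} = M_x B_x - M_x B_x = 0$, so $\Lambda_x$ annihilates $\sspan(B_x)$ and therefore annihilates the normal component of any vector. Combining, $\Lambda_x(\mathrm{D}B_x[\omega]\lambda) = \Lambda_x\Proj_x(\mathrm{D}B_x[\omega]\lambda) = -\Lambda_x\W_x(\omega, B_x\lambda)$, which is the claim.

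The main obstacle I anticipate is the bookkeeping around which projector appears where: one must be careful that $\mathrm{D}B_x[\omega]$ is an $n\times d$ matrix (a tuple of $d$ tangent-space-valued derivatives of normal vectors), that $\mathrm{D}\Proj_x[\omega]\cdot(B_x\lambda)$ indeed lands in $\T_x\M$ so that the Weingarten map is well-defined, and that the sign conventions in~\eqref{2-3} are respected. The cleanest route is probably to avoid choosing $\Proj_x^{\perp} = B_x B_x^{\T}$ explicitly and instead argue abstractly: decompose $\mathrm{D}B_x[\omega]\lambda$ into its $\T_x\M$ and $\N_x\M$ parts, use $\Lambda_x B_x = 0$ to kill the normal part, and identify the tangential part via $\Proj_x(\mathrm{D}B_x[\omega]\lambda) = \mathrm{D}(\Proj_x B_x\lambda)[\omega] - (\mathrm{D}\Proj_x[\omega])(B_x\lambda) = 0 - \W_x(\omega, B_x\lambda)$, where the first term vanishes because $\Proj_x B_x\lambda \equiv 0$. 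Everything else is routine matrix algebra with $H_x$ and $M_x$.
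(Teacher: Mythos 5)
Your proposal is correct and follows essentially the same route as the paper: both proofs rest on the two facts that $\Lambda_x B_x = 0$ (so only the tangential part of $\mathrm{D}B_x[\omega]\lambda$ survives) and that $\Proj_x\big(\mathrm{D}B_x[\omega]\lambda\big) = -\W_x(\omega, B_x\lambda)$, obtained by a product-rule differentiation of the relation between $\Proj_x$ and the orthonormal frame $B_x$ (the paper differentiates $\Proj_x = \I_n - B_xB_x^{\T}$ applied to $u = B_x\lambda$, while you differentiate $\Proj_x^{\perp}B_x = B_x$, equivalently $\Proj_x B_x\lambda \equiv 0$, which is the same computation). Your explicit verification of $\Lambda_x B_x = 0$ is a nice touch that the paper leaves implicit.
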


\begin{proof}
Since $\Proj_{x}^{\perp}  = B_x B_x^{\T}$ and $\Lambda_x B_x = 0$, we have
\[
\Lambda_x (\mathrm{D}B_x[\omega]) \lambda = \Lambda_x \Proj_{x}\left( \mathrm{D}B_x[\omega] \lambda\right).
\]
By the definition of  \textit{Weingarten map} in~\eqref{2-3}, we have $\mathcal{W}_x(\omega,u) = \Proj_x \big(\mathcal{W}_x(\omega,u)\big)$, where $\omega \in \T_x \M$, $u\in \N_x \M$. Therefore, it holds that
\begin{equation}\label{eq:01}
\begin{aligned}
  \mathcal{W}_x(\omega,u) &= \Proj_x\big(\D(x\mapsto \Proj_x)(x)[\omega] \cdot u\big) = \Proj_x\big(\D(I - B_x B_x^T)[\omega] \cdot u\big) \\
  &= \Proj_x\big(- \mathrm{D}B_x[\omega]\cdot B_x^{\T}u - B_x\cdot \mathrm{D}B_x^{\T}[\omega] \cdot u \big ) \\
  &= - \Proj_x\big( \mathrm{D}B_x[\omega]\cdot B_x^{\T}u \big), 
\end{aligned}
\end{equation}
where the sign $\cdot$ denotes usual matrix multiplication,  the last equality follows from $B_x\cdot \mathrm{D}B_x^{\T}[\omega] \cdot u \in \mathrm{N}_{x} \mathcal{M}$. Letting $u = B_x \lambda \in \mathrm{N}_{x} \mathcal{M}$ in~\eqref{eq:01} yields
$\mathcal{W}_x(\omega,B_x \lambda) = - \Proj_x\big( \mathrm{D}B_x[\omega]\cdot \lambda \big)$, which implies
$\Lambda_x ( \mathrm{D}B_x[\omega]) \lambda = -\Lambda_x \mathcal{W}_x(\omega,B_x \lambda).$
\end{proof}

By Lemma~\ref{lemm-3-2}, the action of $\mathcal{J}_x$ can be reformulated as
\begin{equation} \label{eq:02}
\mathcal{J}_x [\omega] 
= -\left[ \I_n - \Lambda_x + t\Lambda_x (\nabla^2 f(x) - \mathcal{L}_x)\right]\omega - M_x B_x H_x ( \mathrm{D}B_x^{\T}[\omega]) v,
\end{equation}
where $\mathcal{L}_x(\omega) = \W_x(\omega,B_x \lambda)$ is linear operator with respect to $\omega\in \T_x \M$. Since at a stationary point $x_*$, $v(x_*)$ is equal to zero by~\cite{2020Proximal}, the last term in~\eqref{eq:02} can be dropped without influencing the local quadratic convergence rate. This yields the linear operator $J(x)$ used in Algorithm~\ref{alg:RPN}. Since for any $\omega \in \T_x \M$, we have $B_x^{\T}J(x)[w] = 0$, it follows that $J(x): \T_x\M \to \T_x\M$.
\begin{remark} \label{example}
If the manifold $\M$ is the unit sphere $\mathbb{S}^{n-1}$, then by~\cite{absil2013extrinsic}, we have $\mathcal{W}_x(w, u) = -wx^{\T}u$ for any $w\in \T_x \M$, $u\in \N_x \M$. For $x\in \mathbb{S}^{n-1}$, we have $B_x = x$. Without  loss of generality, we assume that 
$M_x = \begin{bmatrix}
    \I_j &\\ & 0_{n-j} \end{bmatrix}$.
According to the partition of $M_x$, we have 
\[
\begin{aligned}
    x = \begin{bmatrix}
    x_j \\ x_{n-j}
    \end{bmatrix}, &\ \
    \nabla^2 f(x) = \begin{bmatrix}
    H_{x}^{(11)} & H_x^{(12)}\\
    H_x^{(21)} & H_x^{(22)} 
    \end{bmatrix}, & \mathcal{L}_{x}(\cdot) = \W_{x}\big(\cdot,B_{x} \lambda(x)\big) = -\lambda(x) \I_n,
\end{aligned}
\]
where $H_{11} \in \mathbb{R}^{j\times j}$. By~\eqref{eq:Jsol}, we have
\[
\begin{aligned}
    J(x) &= -\left[\I_n - \Lambda_{x} + t\Lambda_{x} \left(\nabla^2 f(x) - \mathcal{L}_{x}\right)\right]\\
    &= -\begin{bmatrix}
   \frac{x_j x_j^{\T}}{x_j^{\T} x_j} + t\left(\I_j - \frac{x_j x_j^{\T}}{x_j^{\T} x_j}\right)\Bigl(H_x^{(11)}+\lambda(x) \I_j\Bigr) & t\left(\I_j -\frac{x_j x_j^{\T}}{x_j^{\T} x_j}\right)H_x^{(12)}\\
   0_{(n-j) \times j}& \I_{n-j}
   \end{bmatrix}.
\end{aligned}
\]
\end{remark}

\begin{remark}\label{smoothcase}
Consider the smooth case in~\eqref{1-1}, i.e., $h(x) \equiv 0$. The KKT condition in~\eqref{3-1} is therefore
$\nabla f(x) + \frac1t v  + B_x \lambda = 0,\  B_x^{T} v = 0$.
The closed-form solution is given by 
$\lambda = - B_x^{\T} \nabla f(x), \ v = - t \grad f(x)$. 
For the smooth case, we have $M_x = \I_n$, then $J(x)$ in~\eqref{eq:Jsol} can be simplified to  $J(x) = B_x B_x^{\T} -t(\I_n - B_x B_x^{\T})(\nabla^2 f(x) - \mathcal{L}_x)$. Therefore, for any $\omega \in \T_x \M$,
\[
J(x)[\omega] = -t(\I_n - B_x B_x^{\T})(\nabla^2 f(x) - \mathcal{L}_x)\omega = -t \Hess f(x)[\omega],
\]
where the last equation follows from~\cite{absil2013extrinsic}.
It follows that the linear system $J(x)[u(x)] = -v(x)$ is equivalent to the Riemannian Newton linear system 
\[
\Hess f(x)[u(x)] = -\grad f(x).
\]
Thus, $u(x)$ is the Riemannian Newton direction.
\end{remark} 

\subsection{Local Convergence Analysis} \label{sec:RPNlocal}

In this section, we show that for a sufficiently small neighborhood of a local optimal point $x_*$, the RPN has quadratic convergence. Without loss of generality, we assume that the nonzero entries of $x_*$ are in the first part, i.e., $x_* = [\bar{x}_*^T, 0^T]^T$, where any entries in $\bar{x}_* \in \R^{j}$ are nonzero. Moreover, it follow from Lemma~\ref{lemm_Mx} together with $v(x_*)=0$ of~\eqref{v_*} {that if $x_*$ is written in the form of $[\bar{x}_*^T, 0^T]^T$, then the matrix $M_{x_*}$ is in the form of}~\eqref{eq:03}.

 Our analysis is based on an assumption that in a sufficiently small neighborhood, RPN will produce iterates with the correct support of the stationary point $x_*$, which will achieve quadratic convergence.

\begin{assumption}\label{assu-3-2}
There exists a neighborhood $\mathcal{U}$ of $x_* = [\bar{x}_*^T, 0^T]^T$ on $\M$ such that for any $x = [\bar{x}^T, \hat{x}^T]^T \in \mathcal{U}$, it holds that $\bar{x} + \bar{v} \neq 0$ and $\hat{x} + \hat{v} = 0$, where $v = [
\bar{v}^T, \hat{v}^T ]^T$ denotes the solution of~\eqref{eq:subforv} at $x$.
\end{assumption}

Since $x + v$ is promoted to be sparse by the term $h(x) = \mu \|x\|_1$ in Problem~\eqref{eq:subforv}, and other terms therein are smooth, it is reasonable to assume that the support does not change in a neighborhood of $x_*$.

Lemma \ref{lemm-3-3} shows that when the support of $x + v$ is the one of $x_*$, i.e., the support does not change, the locations of the nonzero entries of $M_x$ also remain the same. 

\begin{lemma}\label{lemm-3-3}
Under Assumption~\ref{assu-3-2},  for any $x\in \mathcal{U}$, we have $M_x = M_{x_*}$, where $M_x$ is defined in~\eqref{eq:SpecialM}.
\end{lemma}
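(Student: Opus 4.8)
The plan is to connect the support of $x+v$ (which is controlled by Assumption~\ref{assu-3-2}) to the diagonal pattern of $M_x$ via Lemma~\ref{lemm_Mx}. Recall that Lemma~\ref{lemm_Mx} gives the exact equivalence $(M_x)_{ii} = 0 \iff (x+v)_i = 0$ and $(M_x)_{ii} = 1 \iff (x+v)_i \neq 0$, where $v$ is the solution of~\eqref{eq:subforv} at $x$. So the entire claim reduces to showing that the support of $x+v$ is the same for every $x \in \mathcal{U}$, namely equal to the support of $x_*$.

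First I would fix the partition $x = [\bar{x}^T, \hat{x}^T]^T$ with $\bar{x} \in \R^j$ and $\hat{x} \in \R^{n-j}$, matching the block structure $x_* = [\bar{x}_*^T, 0^T]^T$ established just before the lemma. By Assumption~\ref{assu-3-2}, for any $x \in \mathcal{U}$ the solution $v = [\bar{v}^T, \hat{v}^T]^T$ of~\eqref{eq:subforv} satisfies $\bar{x} + \bar{v} \neq 0$ and $\hat{x} + \hat{v} = 0$; here the first condition should be read entrywise (no entry of $\bar{x}+\bar{v}$ vanishes), which is the natural reading needed to pin down the support exactly. Applying Lemma~\ref{lemm_Mx} coordinate by coordinate: for $i \le j$, $(x+v)_i = (\bar{x}+\bar{v})_i \neq 0$ forces $(M_x)_{ii} = 1$; for $i > j$, $(x+v)_i = (\hat{x}+\hat{v})_{i-j} = 0$ forces $(M_x)_{ii} = 0$. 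Since $M_x$ is diagonal by~\eqref{eq:SpecialM}, this determines $M_x$ completely as $M_x = \begin{bmatrix} \I_j & \\ & 0_{n-j}\end{bmatrix}$. Finally, by the remark preceding the lemma, $M_{x_*}$ has exactly this form~\eqref{eq:03} (using $v(x_*) = 0$ from~\eqref{v_*} together with Lemma~\ref{lemm_Mx}), so $M_x = M_{x_*}$ for all $x \in \mathcal{U}$.

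I do not expect any real obstacle here — the lemma is essentially a bookkeeping consequence of Lemma~\ref{lemm_Mx} and Assumption~\ref{assu-3-2}. The only point requiring a little care is the entrywise interpretation of the nonvanishing condition $\bar{x}+\bar{v} \neq 0$ in Assumption~\ref{assu-3-2}: one should make explicit that it means every component is nonzero (as is consistent with how $\bar{x}_*$ is described as having all nonzero entries), since otherwise Lemma~\ref{lemm_Mx} would only give $(M_x)_{ii} = 1$ on the support of $\bar{x}+\bar{v}$ rather than on all of $\{1,\dots,j\}$. With that reading in place the argument is immediate.
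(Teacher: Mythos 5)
Your proof is correct and follows essentially the same route as the paper: apply Lemma~\ref{lemm_Mx} entrywise to identify the support of $x+v$ with the diagonal pattern of $M_x$, then use Assumption~\ref{assu-3-2} to conclude $M_x = M_{x_*}$; your extra remark about reading $\bar{x}+\bar{v}\neq 0$ entrywise is just making explicit what the paper implicitly assumes.
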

\begin{proof}
According to Lemma~\ref{lemm_Mx}, we have 
$(x + v)_i = 0$ if and only if $(M_x)_{i i} = 0$, and $(x + v)_i \neq 0$ if and only if $(M_x)_{i i} = 1$. It follows from Assumption~\ref{assu-3-2} that $M_x = M_{x_*}$ for any $x\in \U$.
\end{proof}

Now, we are ready to give the local convergence analysis of Algorithm~\ref{alg:RPN}, with the assumption that $J(x_*)$ is nonsingular, where $x_*$ is a local optimal point of~\eqref{1-1}. 
We claim that, under the condition of Proposition~\ref{sec_opt} given later, $J(x_*)$ is always nonsingular. 

\begin{theorem}\label{the_super}
    Suppose that $x_*$ is a local optimal point of \eqref{1-1}, that Assumption~\ref{assu-3-1} and Assumption~\ref{assu-3-2} hold, and that $J(x_*)$ is nonsingular. Then there exists a neighborhood $\mathcal{V}$ of $x_*$ on $\M$ such that for any $x_0 \in \mathcal{V}$, Algorithm~\ref{alg:RPN} generates a sequence $\{x_k\}$ converging quadratically to $x_*$. 
\end{theorem}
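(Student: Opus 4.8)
The plan is to prove quadratic convergence by combining three ingredients that have already been assembled in Sections~\ref{sec:RPNsemiv}--\ref{sec:RPNgenJv}: (i) $v(x)$ is a G-strongly semismooth function near $x_*$ with generalized Jacobians in $\mathcal{K}_v(x)$; (ii) for $x$ in the neighborhood $\mathcal{U}$ of Assumption~\ref{assu-3-2}, the operator $J(x)$ of~\eqref{eq:Jsol} coincides (up to the $v$-dependent term, which vanishes at $x_*$) with the generalized Jacobian $\mathcal{J}_x$ of~\eqref{eq:02}, via Lemma~\ref{lemm-3-3} which fixes $M_x = M_{x_*}$; and (iii) $v(x_*) = 0$ by~\eqref{v_*}, so that $x_*$ is a zero of the ``residual'' $x \mapsto v(x)$ restricted to the manifold. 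I would first argue that since $J(x_*)$ is nonsingular and $x\mapsto J(x)$ is continuous (it is built from $\nabla^2 f$, the Weingarten map, $M_{x_*}$, and the smooth bases $B_x$), there is a neighborhood on which $J(x)$ is uniformly invertible with $\|J(x)^{-1}\|$ bounded by some constant $C$. Then I would establish the key one-step estimate of the form
\[
\|u(x_k) + \mathrm{something}\| \;\lesssim\; \|v(x_k)\|^2,
\]
more precisely that the Newton-type correction $u(x_k)$ defined by $J(x_k)[u(x_k)] = -v(x_k)$ satisfies $v(x_k) + \mathcal{J}_{x_k}[u(x_k)] = \smallO$ terms which, by G-strong semismoothness, are $\mathcal{O}(\|u(x_k)\|^2)$; subtracting $\mathcal{J}_{x_k}$ from $J(x_k)$ costs only a term proportional to $\|v(x_k)\|\,\|u(x_k)\|$ (the dropped term $M_{x_k}B_{x_k}H_{x_k}(\mathrm{D}B_{x_k}^{\T}[u])v$), which is higher order since $v(x_k) \to 0$.

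Concretely, the heart of the argument is to bound $\dist(x_{k+1}, x_*)$ in terms of $\dist(x_k, x_*)$. I would proceed as follows. Write $x_{k+1} = R_{x_k}(u(x_k))$. Using the G-strong semismoothness of $v$ at $x_*$ together with $v(x_*) = 0$, one gets, for any generalized Jacobian $\mathcal{J}_{x_k} \in \mathcal{K}_v(x_k)$,
\[
v(x_k) - \mathcal{J}_{x_k}[\,\xi_k\,] = \mathcal{O}(\|\xi_k\|^2),
\]
where $\xi_k \in \T_{x_k}\M$ is a tangent vector representing the displacement from $x_k$ to $x_*$ (e.g. via $R_{x_k}^{-1}(x_*)$, which is well defined near $x_*$). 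Since $u(x_k)$ solves $J(x_k)[u(x_k)] = -v(x_k)$ and $J(x_k)$ differs from $\mathcal{J}_{x_k}$ only by the $\mathcal{O}(\|v(x_k)\|)$-in-operator-norm term identified in~\eqref{eq:02}, we obtain $\mathcal{J}_{x_k}[u(x_k) + \xi_k] = \mathcal{O}(\|\xi_k\|^2) + \mathcal{O}(\|v(x_k)\|\,\|u(x_k)\|)$; applying $J(x_k)^{-1}$ (uniformly bounded) and using $\|v(x_k)\| = \mathcal{O}(\|\xi_k\|)$ (Lipschitzness of $v$, since $v(x_*)=0$) and $\|u(x_k)\| = \mathcal{O}(\|\xi_k\|)$ yields $\|u(x_k) + \xi_k\| = \mathcal{O}(\|\xi_k\|^2)$. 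Finally, because $R_{x_k}(\xi_k)$ approximates $x_*$ to second order (retraction property: $R_{x_k}(\xi_k) = x_*$ up to $\mathcal{O}(\|\xi_k\|^2)$ once $\xi_k = R_{x_k}^{-1}(x_*)$, in fact exactly $x_*$ by definition of $\xi_k$, and $R_{x_k}$ is Lipschitz), we get $\dist(x_{k+1}, x_*) = \dist(R_{x_k}(u(x_k)), R_{x_k}(\xi_k)) \le L\|u(x_k) - \xi_k\|$... wait, I need $u(x_k) - (-\xi_k)$, i.e. $\|u(x_k) + \xi_k\| = \mathcal{O}(\|\xi_k\|^2) = \mathcal{O}(\dist(x_k,x_*)^2)$, giving the quadratic rate.

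To close the induction I would choose $\mathcal{V} \subseteq \mathcal{U}$ small enough that (a) $J(x)$ is uniformly invertible on $\mathcal{V}$, (b) the implied constant $\tilde{C}$ in $\dist(x_{k+1},x_*) \le \tilde{C}\,\dist(x_k,x_*)^2$ satisfies $\tilde{C}\,\mathrm{diam}(\mathcal{V}) < 1$, and (c) $R_{x_k}$ is a diffeomorphism onto a neighborhood containing $x_*$ for all $x_k \in \mathcal{V}$ so that $x_{k+1}$ stays in $\mathcal{V}$. Then a standard induction gives $x_k \to x_*$ quadratically. The main obstacle I anticipate is the bookkeeping around the manifold: relating the ambient-space semismoothness estimates for $v(x)$ (which live in $\mathbb{R}^n$) to the intrinsic distance $\dist(x_k,x_*)$ and the retraction, and in particular justifying that the dropped term in passing from $\mathcal{J}_x$ to $J(x)$ is genuinely higher-order — this requires a quantitative bound $\|v(x)\| = \mathcal{O}(\dist(x,x_*))$ and uniform control of $\|\mathrm{D}B_x^{\T}\|$ and $\|H_x\|$ on $\mathcal{V}$, which follow from smoothness of $x\mapsto B_x$ and from Assumption~\ref{assu-3-1} (ensuring $B_{x_*}^{\T}M_{x_*}B_{x_*}$, hence $B_x^{\T}M_{x_*}B_x$ nearby, is invertible). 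A secondary subtlety is that the generalized Jacobian $\mathcal{J}_{x_k}$ used in the semismoothness inequality must be a valid element of $\mathcal{K}_v(x_k)$ evaluated at the displacement direction pointing to $x_*$; here Lemma~\ref{lemm-3-3} is essential, as it guarantees the combinatorial part ($M$) is locked to $M_{x_*}$, so the specific $\mathcal{J}_{x_k}$ in~\eqref{3-6} is indeed the relevant one.
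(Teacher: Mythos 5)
Your proposal is essentially the paper's own proof: it uses the same ingredients in the same order (G-strong semismoothness of $v$ at $x_*$ with $v(x_*)=0$, Lemma~\ref{lemm-3-3} to lock $M_x=M_{x_*}$ so that the dropped $v$-dependent term in passing from $\mathcal{J}_x$ to $J(x)$ is higher order via $\|v(x)\|=\mathcal{O}(\|x-x_*\|)$, uniform invertibility of $J(x)$ near $x_*$ by continuity, and the retraction's second-order/Lipschitz property to transfer the tangent-space estimate to $\|x_{k+1}-x_*\|$), with the paper working with the ambient displacement $x-x_*$ and the identity $x_*-x=R_x^{-1}(x_*)+\mathcal{O}(\|x-x_*\|^2)$ rather than your $\xi_k$. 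The only blemish is a sign slip at the end: with $\xi_k=R_{x_k}^{-1}(x_*)$ the correct relations are $v(x_k)+\mathcal{J}_{x_k}[\xi_k]=\mathcal{O}(\|\xi_k\|^2)$ and hence $\|u(x_k)-\xi_k\|=\mathcal{O}(\|\xi_k\|^2)$ (not $u(x_k)+\xi_k$), which is exactly what your final Lipschitz-retraction step needs, so the argument goes through unchanged.
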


\begin{proof}
Since $v$ is G-strongly semismooth with respect to $\K_v$, it holds that
\[
v(x) - v(x_*) - \mathcal{J}_x(x - x_*) = \mathcal{O}(\|x - x_*\|^2),
\]
where $\mathcal{J}_x = \J_1(x) + \J_2(x)\in \K_v(x)$ in~\eqref{3-6} with
\[
\J_1(x) = -\left[\I_n - \Lambda_x + t\Lambda_x \left(\nabla^2 f(x) + (\mathrm{D} B_x) \lambda \right)\right]
\]
and
\[
\J_2(x) = -\left[M_x B_x H_x (\mathrm{D} B_x^{\T} )v - M_x B_x H_x B_x^{\T}\right].
\]  
According to Lemma~\ref{lemm-3-3}, for $x\in \U$, the position of $M_x$ is fixed, it follows that $\J_1(x)$ and $\J_2(x)$ vary continuously with $x$. Since $v(x)$ is G-strongly semismooth, it follows from \cite[Corollary 1]{Gowda2004} that there exists $\gamma > 0$ such that $\|v(x) - v(x_*) \| \le \gamma \|x-x_*\|$. Therefore,
\[
\begin{aligned}
    \J_2 (x - x_*) &=  -M_x B_x H_x\cdot \mathrm{D}B_x^{\T}[x-x_*] \cdot v + M_x B_x H_x B_x^{\T}(x-x_*) \\
    &\overset{(i)}{=}  -M_x B_x H_x\cdot \mathrm{D}B_x^{\T}[x-x_*] \cdot v - M_x B_x H_x B_x^{\T}\left(R_x^{-1}(x_*) + \mathcal{O}(\|x-x_*\|^2\right)\\
    &\overset{(ii)}{=} -M_x B_x H_x\cdot \mathrm{D}B_x^{\T}[x-x_*] \cdot v + \mathcal{O}(\|x-x_*\|^2)\\
    &\overset{(iii)}{=}\mathcal{O}(\|x-x_*\|^2),
\end{aligned}
\]
where $(i)$ follows from $x_* - x = R_x^{-1}(x_*) + \mathcal{O}(\|x-x_*\|^2)$ and $R_x^{-1}(x_*) \in \T_x \M$ is inverse retraction,$ (ii)$ and $(iii)$ follow from $M_x$ being fixed, $R_x^{-1}(x_*)$  belonging to $\T_x \M$ and therefore $B_x^{\T}\left(R_x^{-1}(x_*)\right) = 0$, $B_x$ and $H_x$ varying smoothly when $x$ is sufficiently close to $x_*$. Therefore, 
\[
v(x) - v(x_*) - \J_1(x)(x-x_*) = \mathcal{O}(\|x - x_*\|^2).
\]
Note that
\[
\begin{aligned}
&\J_1(x)(x-x_*) = -\J_1(x)\left[R_x^{-1}(x_*) + \mathcal{O}(\|x-x_*\|^2\right]\\
=& -J(x)\left[R_x^{-1}(x_*)\right] + \mathcal{O}\left(\|x-x_*\|^2\right)= J(x)(x-x_*) + \mathcal{O}\left(\|x-x_*\|^2\right),
\end{aligned}
\]
then 
\[
v(x) - v(x_*) - J(x)(x-x_*) = \mathcal{O}(\|x - x_*\|^2).
\]
Since $J(x)u(x)=-v(x)$, then
\[
-J(x) \left[u(x) + x - x_*\right] = \mathcal{O}(\|x-x_*\|^2).
\]
Since $J(x)$ in~\eqref{eq:Jsol} varies continuously with $x$ and $J(x_*)$ is nonsingular, $J(x)$ and $J(x)^{-1}$ are bounded.
Thus,  
$x+u(x) - x_* = \mathcal{O}(\|x-x_*\|^2).$
By adding the subscript, we have $x_k+u_k - x_* = \mathcal{O}(\|x_k-x_*\|^2)$. Therefore, for any $\eta >0$, there exists a neighborhood $\U_1$ of $x_*$ such that for any $x_k \in \U_1$, it holds that
$\|x_k + u_k  -x_*\| \le \eta  \|x_k - x_*\|^2 \le \eta  \|x_k - x_*\|$. Therefore, we have $\|u_k\| \le (1+\eta)\|x_k - x_*\|$,
which means $u_k = \mathcal{O}(\|x_k-x_*\|)$. According to the definition of the retraction, there exists a neighborhood $\U_2$ of $x_*$ such that for any $x_k \in \U_2$,
\[
R_{x_k}(u_k) - (x_k + u_k) = \mathcal{O}(\|u_k\|^2).
\]
 Let $\mathcal{V} = \U \cap \U_1 \cap \U_2$, for any $x_k \in \mathcal{V}$, we have 
\[
R_{x_k}(u_k) - (x_k + u_k) = \mathcal{O}(\|x_k - x_*\|^2),
\]
and
\[
\begin{aligned}
    \|x_{k+1} - x_*\| &= \|R_{x_k}(u_k) - x_*\|\\
    &\le \|R_{x_k}(u_k) - (x_k + u_k)\| + \|x_k + u_k -x_*\| = \mathcal{O}(\|x_k - x_*\|^2).
\end{aligned}
\]
\end{proof}

\begin{remark}\label{im_thm}
    In Theorem~\ref{the_super}, we achieve local quadratic convergence results within the neighborhood $\mathcal{V}$, which plays a key role in the design of globalization methods. The main idea behind globalization is to enable $x_k$ to enter the neighborhood $\mathcal{V}$, we then propose a globalized version, see Algorithm~\ref{alg:RPN_s}.
\end{remark}

\subsection{Optimality Conditions}\label{subsec:opc}

The first-order necessary optimality condition for Stiefel manifold has been given in~\eqref{v_*}, which shows that  $v(x_*) = 0$ if $x_*$ is a local optimal point. It's worth noting that the first-order necessary condition is straightforward to apply when $\M$ is a submanifold of a Euclidean space. In this section, we derive the second-order necessary optimality condition. Additionally, we present a sufficient condition for the nonsingularity of $J(x_*)$ in~\eqref{equ:J_x_*}.

Without loss of generality, let  $x_* = \begin{bmatrix}
\bar{x}_* \\ 0
\end{bmatrix}$ be a stationary point of~\eqref{1-1}, where $\bar{x}_* \in \R^j$. By Lemma~\ref{lemm-3-3}, it holds that
\[
M_{x_*} = \begin{bmatrix}
 \I_j & \\
 & 0_{n-j} 
\end{bmatrix}.
\]
By the partition of $M_{x_*}$, we have
\begin{equation*}
\begin{aligned}
B_{x_*} = \begin{bmatrix}
\bar{B}_{x_*} \\ \hat{B}_{x_*}
\end{bmatrix}, &\ \
\nabla^2 f(x_*) = \begin{bmatrix}
 H_{x_*}^{(11)} & H_{x_*}^{(12)}\\
 H_{x_*}^{(21)} & H_{x_*}^{(22)} 
\end{bmatrix}, & \mathcal{L}_{x_*} = \begin{bmatrix}
 L_{x_*}^{(11)} & L_{x_*}^{(12)}\\
 L_{x_*}^{(21)} & L_{x_*}^{(22)} 
\end{bmatrix},
\end{aligned}
\end{equation*}
where $\bar{B}_{x_*} \in \mathbb{R}^{j\times d}$ with $j\ge d$ has full column rank by Assumption~\ref{assu-3-1}, $H_{x_*}^{(11)} \in \mathbb{R}^{j\times j}$, $L_{x_*}^{(11)} \in \mathbb{R}^{j\times j}$. Therefore,

\begin{equation}\label{equ:J_x_*}
\begin{aligned}
J(x_*) &= -\left[\I_n - \Lambda_{x_*} + t\Lambda_{x_*} (\nabla^2 f(x_*) - \mathcal{L}_{x_*})\right] \\
&= -\begin{bmatrix}
\bar{B}_{x_*} \bar{B}_{x_*}^{\dagger} + t(\I_j - \bar{B}_{x_*} \bar{B}_{x_*}^{\dagger})(H_{x_*}^{(11)}-L_{x_*}^{(11)}) & t(\I_j - \bar{B}_{x_*} \bar{B}_{x_*}^{\dagger})(H_{x_*}^{(12)}-L_{x_*}^{(12)})\\
0_{(n-j) \times j}& \I_{n-j}
\end{bmatrix},
\end{aligned}
\end{equation}
where $\dagger$ denotes generalized \textit{Moore-Penrose inverse} and $ \bar{B}_{x_*}^{\dagger} =  (\bar{B}_{x_*}^{\T}\bar{B}_{x_*})^{-1}\bar{B}_{x_*}^{\T}$. 


Since $x_* = \begin{bmatrix}
\bar{x}_* \\ 0
\end{bmatrix}$, the non-smooth optimization problem~\eqref{1-1} can be separated into the smooth part and non-smooth part at the sufficiently small neighborhood of $x_*$, which correspond to $\bar{x}_*$ and the zero part, respectively. By fixing the location of the zero part, the second-order necessary condition follows from only considering the smooth part of the optimization problem. To do this, we first analyze the property of the set consisting of the points on $ \M$ that keep zero part unchanged.

Lemma~\ref{lemm-3-5} shows that the set on $\M$ that keeps its location of zero part unchanged is an embedded submanifold of $\mathbb{R}^n$ under reasonable conditions. 

\begin{lemma}\label{lemm-3-5}
Let $\mathcal{M}$ be an embedded submanifold of Euclidean space $\R^n$. Let $q(x) = Q x$, where $Q = \left[0_{(n-j)\times j},\ \I_{n-j}\right]$, if
\begin{equation}
\N_{x_*} \mathcal{M} \cap \mathrm{range}(Q^{\T}) = \{0\},\label{3-13}
\end{equation}
 then $\mathcal{N} = \{x\in \mathcal{M}: q(x)=0\}\cap \Omega_{x_*}$ is an embedded submanifold of $\mathbb{R}^n$, where $\Omega_{x_*}$ is a sufficiently small neighbourhood of $x_*$ and $\mathrm{range}(Q^{\T})$ denotes the columns space of $Q^T$. Furthermore, the tangent space of $\mathcal{N}$ at $x \in \mathcal{N}$ is 
\[
\T_{x} \mathcal{N} = \left\{ u =\begin{bmatrix}
u_1\\0
\end{bmatrix}
: \bar{B}_{x}^{\T} u_1 = 0, u_1 \in \mathbb{R}^j \right\}.
\]
\end{lemma}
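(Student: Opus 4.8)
The plan is to apply the standard submanifold description from Definition~\ref{def-2-1}: exhibit $\mathcal{N}$ locally as the zero set of a smooth map whose differential has full rank at $x_*$. Concretely, near $x_*$ the manifold $\M$ is cut out by a local defining function $h_\M:\mathcal{U}\to\R^d$ with $\operatorname{rank}\D h_\M(x_*)=d$; the rows of $\D h_\M(x)$ span $\N_x\M$, and after an orthonormalization we may as well think of $B_x^\T$ as playing that role. I would then define the combined map $g:\Omega_{x_*}\to\R^{d+(n-j)}$ by $g(x)=\bigl(h_\M(x);\,q(x)\bigr) = \bigl(h_\M(x);\,Qx\bigr)$, so that by construction $g^{-1}(0)\cap\Omega_{x_*} = \{x\in\M:q(x)=0\}\cap\Omega_{x_*} = \mathcal{N}$. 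Property (a) of Definition~\ref{def-2-1} is then immediate, so the whole content is the rank condition (b): $\operatorname{rank}\D g(x_*) = d+(n-j)$.

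For the rank computation, observe $\D g(x_*) = \begin{bmatrix}\D h_\M(x_*)\\ Q\end{bmatrix}$. The row space of $\D h_\M(x_*)$ is $\N_{x_*}\M$ (equivalently $\operatorname{range}(B_{x_*})$ viewed as row vectors), and the row space of $Q$ is $\operatorname{range}(Q^\T)$. A block matrix of this form has full row rank precisely when the two row spaces intersect trivially, i.e. when $\N_{x_*}\M\cap\operatorname{range}(Q^\T)=\{0\}$, which is exactly hypothesis~\eqref{3-13} — together with a dimension count: $\dim\N_{x_*}\M=d$, $\dim\operatorname{range}(Q^\T)=n-j$, and $d+(n-j)\le n$ must hold, which follows from Assumption~\ref{assu-3-1} ($j\ge d$). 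So I would spell out: trivial intersection plus $d+(n-j)\le n$ gives that the stacked matrix has $d+(n-j)$ independent rows, hence rank $d+(n-j)$; then shrink $\mathcal{U}$ if necessary so the rank stays full throughout $\Omega_{x_*}$ (rank is lower semicontinuous, so full rank is an open condition), and conclude $\mathcal{N}$ is an embedded submanifold of dimension $n-(d+(n-j)) = j-d$.

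For the tangent space, I would use $\T_x\mathcal{N}=\ker\D g(x) = \ker\D h_\M(x)\cap\ker Q$. Now $\ker\D h_\M(x) = \T_x\M = \{v\in\R^n: B_x^\T v=0\}$, and $\ker Q = \{v: Qv=0\} = \{v=[v_1^\T,0^\T]^\T: v_1\in\R^j\}$. Intersecting: $u=[u_1^\T,0^\T]^\T$ with $u_1\in\R^j$ and $B_x^\T u = \bar B_x^\T u_1 + \hat B_x^\T\cdot 0 = \bar B_x^\T u_1 = 0$, using the partition $B_x^\T=[\bar B_x^\T,\hat B_x^\T]$ from Assumption~\ref{assu-3-1}. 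This is exactly the claimed description
\[
\T_x\mathcal{N} = \left\{u=\begin{bmatrix}u_1\\0\end{bmatrix}: \bar B_x^\T u_1=0,\ u_1\in\R^j\right\}.
\]

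The main obstacle I anticipate is a slightly delicate point rather than a hard one: making rigorous the identification of the row space of $\D h_\M(x_*)$ with $\N_{x_*}\M=\operatorname{range}(B_{x_*})$. The clean way is to recall that for an embedded submanifold cut out by $h_\M$, one has $\T_{x_*}\M=\ker\D h_\M(x_*)$, hence the row space of $\D h_\M(x_*)$ is $(\ker\D h_\M(x_*))^\perp = (\T_{x_*}\M)^\perp = \N_{x_*}\M$; so I can work directly with $\ker\D h_\M$ and never need to match individual rows with the $b_{x_*}^{(i)}$. A secondary nuisance is the case $\M$ open in $\R^n$ (then $d=0$, $g=q$, and everything degenerates gracefully) and confirming that the neighborhood $\Omega_{x_*}$ can be taken inside both the domain $\mathcal{U}$ of $h_\M$ and the region where the smooth-basis map $x\mapsto B_x$ is defined — both are handled by simply intersecting finitely many neighborhoods.
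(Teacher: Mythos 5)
Your proposal is correct and follows essentially the same route as the paper: stack the local defining function of $\M$ with $q$ to get $\psi(x)=(\phi(x);q(x))$, use hypothesis~\eqref{3-13} to show $\D\psi(x_*)$ has full row rank $d+(n-j)$, shrink the neighborhood so the rank persists, and read off $\T_x\mathcal{N}$ as the kernel, with the open-subset case handled separately. One small remark: your appeal to Assumption~\ref{assu-3-1} for the count $d+(n-j)\le n$ is unnecessary, since the trivial intersection in~\eqref{3-13} already forces $\dim\N_{x_*}\M+\dim\mathrm{range}(Q^{\T})\le n$.
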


\begin{proof}
The result follows from transversality theory, see~\cite[Theorem~6.30(b)]{Lee_2012}. We give a detailed proof for the reader's convenience.

If $\mathcal{M}$ is an open subset of $\mathbb{R}^n$, then $\mathcal{N}$ is the intersection of an open set of $\mathbb{R}^n$ with a hyperplane $\{x \in \mathbb{R}^n : q(x) = 0\}$. Therefore, it is an embedded submanifold of $\mathbb{R}^n$.

If $\mathcal{M}$ is not an open subset of $\mathbb{R}^n$, then for $x_* \in \mathcal{M}$, there exists a local defining function $\phi: \mathcal{U} \to \mathbb{R}^d$ satisfying
 \begin{itemize}
     \item[(a)] $\mathcal{M} \cap \mathcal{U} = \phi^{-1}(0) = \{y\in \mathcal{U} : \phi(y) = 0\};$ and
     \item[(b)] $\mathrm{rank}\D \phi(x_*) = d$,
 \end{itemize}
where $\mathcal{U}$ is a neighborhood of $x_*$ in $\R^n$. Define
\[
\psi(x) = \begin{pmatrix}
\phi(x)\\
q(x)
\end{pmatrix} : \mathcal{U}\to \mathbb{R}^{d+(n-j)}.
\]
We have
\begin{equation*}
\begin{aligned}
\mathrm{rank}(\D \psi(x_*)) = \mathrm{rank}
\begin{pmatrix}
  \D \phi(x_*)\\
  \D q(x_*)
  \end{pmatrix}
= \mathrm{rank}
\begin{pmatrix}
B_{x_*}^T \\
Q
  \end{pmatrix}
= \mathrm{rank}\begin{pmatrix}
B_{x_*} & Q^T
\end{pmatrix}
= d + n - j,
\end{aligned}
\end{equation*}
where the last equation follows from~\eqref{3-13}.

Therefore, there exist a neighborhood $\tilde{\Omega}_{x_*}$ of $x_*$ in $\mathbb{R}^n$ such that $\D \psi(x)$ is full row rank. Let $\Omega_{x_*} = \mathcal{U} \cap \tilde{\Omega}_{x_*}$. 
We have
\[
\mathcal{N} \cap \Omega_{x_*} = \psi^{-1}(0),
\]
and
\[
\mathrm{rank} \D \psi(x) = d+(n-j).
\]
It follows that $\mathcal{N}$ is an embedded submanifold of $\mathbb{R}^n$ with  $\dim \T_x \mathcal{N} = j-d$, where $j\ge d$. Furthermore, for any $x\in \mathcal{N}$, we have
\[
\begin{aligned}
    \T_{x} \mathcal{N} = \mathrm{ker}(\D \psi(x)) &= \left\{u: \D \phi(x)[u] = 0, \D q(x)[u]=0 \right\}\\
    &= \left\{u: B_{x}^{\T} u = 0, q(u)=0 \right\}\\
    &= \left\{u=\begin{bmatrix}
       u_1\\u_2
    \end{bmatrix}: \bar{B}_{x}^{\T} u_1 = 0, u_2 = 0 \right\}.
\end{aligned}
\]
\end{proof}

\begin{remark}\label{con_3_21}
    Condition~\eqref{3-13} is implied by Assumption~\ref{assu-3-1}. Specifically, for any $\mathbf{z} \in \N_{x_*} \mathcal{M} \cap \mathrm{range}(Q^{\T})$, we have $\mathbf{z} = Q^{\T}\mathbf{s} = B_{x_*}\mathbf{t}$ for certain vectors $\mathbf{s}$ and $\mathbf{t}$. 
    Since $Q = \left[0_{(n-j)\times j},\ \I_{n-j}\right]$, the first $j$ entries of $\mathbf{z}$ are zeros. It follows that the first $j$ entries of $B_{x_*}\mathbf{t}$ are zeros, which implies $\bar{B}_{x_*}\mathbf{t} = 0$. Combining with Assumption~\ref{assu-3-1} yields $\mathbf{t} = 0$. Therefore, $\mathbf{z} = B_{x_*}\mathbf{t}$ must be $0$, which implies Condition~\eqref{3-13} holds.
\end{remark}

Now, we are ready to give a second-order necessary optimality condition for Problem~\eqref{1-1}. 

\begin{proposition}\label{sec_opt}
Suppose Assumption~\ref{assu-3-1} holds. 
If $x_* = \begin{bmatrix}
\bar{x}_* \\ 0
\end{bmatrix}$ is a local optimal point of Problem~\eqref{1-1} with $\bar{x}_* \in \R^j$, then $v(x_*) = 0$ and $H_{x_*}^{(11)} - L_{x_*}^{(11)} \succeq 0$ on the subspace
$\{w: \bar{B}_{x_*}^{\T}w = 0\}$. Furthermore, if $H_{x_*}^{(11)} - L_{x_*}^{(11)} \succ 0$ on the subspace
$\{w: \bar{B}_{x_*}^{\T}w = 0\}$, then $J(x_*)$ in~\eqref{equ:J_x_*} is nonsingular.
\end{proposition}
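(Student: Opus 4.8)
The statement splits into two parts and I would treat them separately. For the second-order necessary condition the plan is to restrict Problem~\eqref{1-1} to the auxiliary manifold $\mathcal{N}$ of Lemma~\ref{lemm-3-5} --- the points of $\M$ near $x_*$ that keep the zero pattern of $x_*$ --- on which the $\ell_1$ term is affine, invoke the classical second-order necessary condition for smooth optimization on $\mathcal{N}$, and then identify the induced Riemannian Hessian with the quadratic form $w\mapsto w^{\T}(H_{x_*}^{(11)}-L_{x_*}^{(11)})w$ on $\T_{x_*}\mathcal{N}=\{w:\bar{B}_{x_*}^{\T}w=0\}$. For the nonsingularity of $J(x_*)$ I would argue directly from the explicit block form~\eqref{equ:J_x_*}.

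By Remark~\ref{con_3_21}, Assumption~\ref{assu-3-1} implies Condition~\eqref{3-13}, so Lemma~\ref{lemm-3-5} applies with $Q=[0_{(n-j)\times j},\ \I_{n-j}]$ and $\mathcal{N}=\{x\in\M:Qx=0\}\cap\Omega_{x_*}$ is an embedded submanifold of $\R^n$ with $\T_{x_*}\mathcal{N}=\{[\bar{\eta}^{\T},0^{\T}]^{\T}:\bar{B}_{x_*}^{\T}\bar{\eta}=0\}$, which we endow with the metric induced from $\R^n$. Since $\bar{x}_*$ has only nonzero entries, on a neighborhood of $x_*$ in $\M$ we have $\|x\|_1=\sum_{i\le j}\sgn((\bar{x}_*)_i)x_i+\|\hat{x}\|_1$, hence $F(x)=\tilde f(x)+\mu\|\hat{x}\|_1$ with $\tilde f(x)=f(x)+\langle c,x\rangle$ smooth and $c=[\mu\,\sgn(\bar{x}_*)^{\T},0^{\T}]^{\T}$. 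As $x_*\in\mathcal{N}\subseteq\M$ and $\|\hat{x}\|_1\equiv0$ on $\mathcal{N}$, the local optimality of $x_*$ for $F$ on $\M$ makes $x_*$ a local minimizer of the smooth function $\tilde f$ on $\mathcal{N}$, so $\grad_{\mathcal{N}}\tilde f(x_*)=0$ and $\Hess_{\mathcal{N}}\tilde f(x_*)\succeq0$. (The claim $v(x_*)=0$ is just~\eqref{v_*}.)

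Next I would evaluate $\Hess_{\mathcal{N}}\tilde f(x_*)$ using formula~\eqref{eq:Rhess} on $\mathcal{N}$; its Euclidean Hessian is $\nabla^2 f(x_*)$. From $v(x_*)=0$ and~\eqref{3-1}, the first $j$ coordinates of $x_*=\prox_{th}(x_*-t[\nabla f(x_*)+B_{x_*}\lambda_*])$ give (using $(\bar{x}_*)_i\neq0$) $\overline{\nabla f(x_*)}+\bar{B}_{x_*}\lambda_*=-\mu\,\sgn(\bar{x}_*)$, so writing $\nabla\tilde f(x_*)=-B_{x_*}\lambda_*+r$ we get $r\in\{0\}^{j}\times\R^{n-j}$, while $-B_{x_*}\lambda_*\in\N_{x_*}\M\subseteq\N_{x_*}\mathcal{N}$ and $r\in\N_{x_*}\mathcal{N}$, so the $\mathcal{N}$-normal part of $\nabla\tilde f(x_*)$ is $\nabla\tilde f(x_*)$ itself. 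For the Weingarten term: since $\T_x\mathcal{N}\subseteq\R^{j}\times\{0\}$ for $x$ near $x_*$ and $r\in\{0\}^{j}\times\R^{n-j}$, the projection of $r$ onto $\T_x\mathcal{N}$ vanishes identically, hence $\W^{\mathcal{N}}_{x_*}(\eta,r)=0$; and $x\mapsto B_x\lambda_*$ is a smooth field normal to both $\M$ and $\mathcal{N}$ near $x_*$, so combining the normal-field characterization $\W_x(\eta,U(x))=-\Proj_{\T_x\M}(\mathrm{D}U(x)[\eta])$ --- which for $U=B_x\lambda_*$ is~\eqref{eq:01} --- with $\T_{x_*}\mathcal{N}\subseteq\T_{x_*}\M$ gives $\W^{\mathcal{N}}_{x_*}(\eta,-B_{x_*}\lambda_*)=-\Proj_{\T_{x_*}\mathcal{N}}\big(\W_{x_*}(\eta,B_{x_*}\lambda_*)\big)=-\Proj_{\T_{x_*}\mathcal{N}}(\mathcal{L}_{x_*}[\eta])$. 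Thus $\Hess_{\mathcal{N}}\tilde f(x_*)[\eta]=\Proj_{\T_{x_*}\mathcal{N}}\big((\nabla^2 f(x_*)-\mathcal{L}_{x_*})[\eta]\big)$, and evaluating the quadratic form on $\eta=[\bar{\eta}^{\T},0^{\T}]^{\T}\in\T_{x_*}\mathcal{N}$ (where $\Proj_{\T_{x_*}\mathcal{N}}\eta=\eta$) gives $\langle\eta,\Hess_{\mathcal{N}}\tilde f(x_*)\eta\rangle=\bar{\eta}^{\T}(H_{x_*}^{(11)}-L_{x_*}^{(11)})\bar{\eta}$; together with $\Hess_{\mathcal{N}}\tilde f(x_*)\succeq0$ this is exactly $H_{x_*}^{(11)}-L_{x_*}^{(11)}\succeq0$ on $\{w:\bar{B}_{x_*}^{\T}w=0\}$.

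For the last assertion, from~\eqref{equ:J_x_*} the matrix $J(x_*)$ is block upper triangular with lower-right block $-\I_{n-j}$, so it is nonsingular iff its upper-left block $P+t(\I_j-P)(H_{x_*}^{(11)}-L_{x_*}^{(11)})$ is, where $P=\bar{B}_{x_*}\bar{B}_{x_*}^{\dagger}$ is the orthogonal projector onto $\mathrm{range}(\bar{B}_{x_*})$ and $\I_j-P$ the orthogonal projector onto $\{w:\bar{B}_{x_*}^{\T}w=0\}$. If $[P+t(\I_j-P)(H_{x_*}^{(11)}-L_{x_*}^{(11)})]w=0$, left-multiplying by $P$ and using $P(\I_j-P)=0$ gives $Pw=0$, so $(\I_j-P)w=w$ and the equation reduces to $(\I_j-P)(H_{x_*}^{(11)}-L_{x_*}^{(11)})w=0$; pairing with $w$ and using the symmetry of $\I_j-P$ yields $w^{\T}(H_{x_*}^{(11)}-L_{x_*}^{(11)})w=0$, hence $w=0$ by positive-definiteness on $\{w:\bar{B}_{x_*}^{\T}w=0\}$. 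Thus $J(x_*)$ is nonsingular. The step I expect to require most care is the identification of $\Hess_{\mathcal{N}}\tilde f(x_*)$ with $H_{x_*}^{(11)}-L_{x_*}^{(11)}$: relating the Weingarten map of the auxiliary manifold $\mathcal{N}$ to that of $\M$, reading the right multiplier relation off the proximal KKT system, and separating the genuine $\M$-normal part $-B_{x_*}\lambda_*$ of $\nabla\tilde f(x_*)$ from the flat extra normal directions introduced by pinning the zeros; the reduction of $F$ to $\tilde f$ on $\mathcal{N}$ and the block-triangular argument are routine by comparison.
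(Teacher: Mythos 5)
Your proposal is correct and follows essentially the same route as the paper: restrict to the fixed-support manifold $\mathcal{N}$ of Lemma~\ref{lemm-3-5}, linearize the $\ell_1$ term using the sign pattern of $\bar{x}_*$ together with the multiplier relation from the prox-KKT system, compare the Weingarten maps of $\mathcal{N}$ and $\M$ to identify the restricted Hessian with $H_{x_*}^{(11)}-L_{x_*}^{(11)}$, and establish nonsingularity of $J(x_*)$ by the same projector argument on the $(1,1)$ block. The differences are only in execution: the paper adds the extra linear term $\theta^{\T}x_2$ so that $\nabla\tilde{F}(x_*)=-B_{x_*}\lambda_*$ exactly and computes $\tilde{\mathcal{W}}_{x_*}$ with an explicit basis of $\N_{x_*}\mathcal{N}$, whereas you keep the residual normal component $r$ and correctly dispose of it via $\mathcal{W}^{\mathcal{N}}_{x_*}(\cdot,r)=0$, and you invoke the abstract second-order necessary condition on $\mathcal{N}$ instead of the paper's curve-based argument.
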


\begin{proof}
It follows from~\eqref{v_*} that $v(x_*) = 0$.
According to the KKT condition \eqref{3-1} at $x_*$, we have 
\[
x_* = \prox_{t h}\bigl( x_* - t [\nabla f(x_*) + B_{x_*} \lambda_* ] \bigr).
\]
It follows from~\eqref{eq:EProx} that
\[
\bar{x}_* + t \mu \sgn(\bar{x}_*) = \bar{x}_* - t[\nabla f(x_*)]_1 -t \bar{B}_{x_*}\lambda_*,
\]
where $\nabla f(x_*)^{\T} = \Big[[\nabla f(x_*)]_1^{\T},\ [\nabla f(x_*)]_2^{\T} \Big]$ with $[\nabla f(x_*)]_1\in \R^j$. Therefore, 
\begin{equation}\label{3-16}
-\bar{B}_{x_*}\lambda_* = \mu \sgn(\bar{x}_*) + [\nabla f(x_*)]_1.
\end{equation}

As mentioned in Remark~\ref{con_3_21}, Condition~\eqref{3-13} holds, hence $\mathcal{N}$ in Lemma~\ref{lemm-3-5}
is an embedded submanifold.
Let $\theta = -\hat{B}_{x_*}\lambda_* - [\nabla f(x_*)]_2 \in \R^{n-j}$. Define a function
\[
\Tilde{F}(x) = f(x) + \mu \mathrm{sgn}(\bar{x}_*)^{\T} x_1 + \theta^{\T} x_2,
\]
where $x = \begin{bmatrix}
x_1\\x_2
\end{bmatrix} \in \mathcal{N}$ with $x_1 \in \R^k$. 
Therefore, it holds that $F(x_*) =  \Tilde{F}(x_*)$, where $F(x)$ is the objection function in~\eqref{1-1}. Since $\nabla \Tilde{F}(x_*) = \nabla f(x_*) + \begin{bmatrix}
\mu \sgn(\bar{x}_*)\\ \theta
\end{bmatrix} $, it follows from~\eqref{3-16} that
\begin{equation}
   \nabla \Tilde{F}(x_*) = \begin{bmatrix}
[\nabla f(x_*)]_1 + \mu \sgn(\bar{x}_*)\\ [\nabla f(x_*)]_2 + \theta
\end{bmatrix} = \begin{bmatrix}
-\bar{B}_{x_*}\lambda_*\\-\hat{B}_{x_*}\lambda_*
\end{bmatrix} = - B_{x_*}\lambda_*. \label{3-14}
\end{equation}
Let $\gamma(t)$ be a smooth curve on $\mathcal{N}$ with $\gamma(0) = x_*$ and $\gamma^{\prime}(0) = u$ for any $u \in \T_{x_*} \mathcal{N}$. Therefore, $\gamma(t)$ is also a smooth curve on $\M$, It follows that $t=0$ is a local optimal point of $F(\gamma(t)) = \Tilde{F}(\gamma(t))$. Since $\tilde{F}(\gamma(t))$ is smooth at $t = 0$, we have
\[
\left.\frac{\mathrm{d}^2}{\mathrm{d} \mathrm{t}^2} \Tilde{F}(\gamma(t))\right|_{\mathrm{t}=0} \ge 0.
\]
Note that 
\[
\frac{\mathrm{d}}{\mathrm{d} \mathrm{t}} \Tilde{F}(\gamma(t)) = \left\langle \grad \Tilde{F}(\gamma(t)), \gamma^{\prime}(t)\right\rangle,
\]
and
\[
\left.\frac{\mathrm{d}^2}{\mathrm{d} \mathrm{t}^2} \Tilde{F}(\gamma(t))\right|_{\mathrm{t}=0} = \left\langle \Hess \Tilde{F}(x_*)[u], u\right\rangle + \left\langle \grad \Tilde{F}(x_*), \gamma^{\prime\prime}(0)\right\rangle.
\]
Since $x_*$ is  a local optimal point of $\Tilde{F}(x)$ over the manifold $\mathcal{N}$, so that $\grad \Tilde{F}(x_*) = 0$. On the other hand, for any $u \in \T_{x_*}\mathcal{N}$, we have
\begin{equation*}
\begin{aligned}
    \Hess &\Tilde{F}(x_*)[u] = \Proj_{\T_{x_*}\mathcal{N}} \nabla^2 \Tilde{F}(x_*)[u] + \Tilde{\mathcal{W}}_{x_*}\left(u, \Proj_{\T_{x_*}\mathcal{N}}^{\perp}\nabla \Tilde{F}(x_*)\right)\\
    &\overset{(i)}{=} \Proj_{\T_{x_*}\mathcal{N}} \nabla^2 \Tilde{F}(x_*)[u] + \Tilde{\mathcal{W}}_{x_*}\left(u, \begin{bmatrix}
    \bar{B}_{x_*} & \\
    & \I_{n-j}
    \end{bmatrix}
    \begin{bmatrix}
    \bar{B}_{x_*} & \\
    & \I_{n-j}
    \end{bmatrix}^{\dagger}
    \nabla \Tilde{F}(x_*)\right)\\
    &\overset{(ii)}{=} \Proj_{\T_{x_*}\mathcal{N}} \nabla^2 \Tilde{F}(x_*)[u] - \Tilde{\mathcal{W}}_{x_*}\left(u, B_{x_*}\lambda_* \right)
\end{aligned}
\end{equation*}
where $\Tilde{\mathcal{W}}_{x_*}$ denotes the Weingarten map on manifold $\mathcal{N}$,
$(i)$ follows from $ \begin{bmatrix}
    \bar{B}_{x_*} & \\
    & \I_{n-j}
    \end{bmatrix}$ is a basis of $\N_{x_*} \mathcal{N}$, $(ii)$ follow from~\eqref{3-14}.

Note that  $ \begin{bmatrix}
    \bar{B}_{x} & \\
    & \I_{n-j}
    \end{bmatrix}$ is a basis of $\N_{x} \mathcal{N}$, then for any $u=\begin{bmatrix}
u_1 \\ u_2
\end{bmatrix}\in \T_{x_*}\mathcal{N}$ satisfying $\bar{B}_{x_*}^{\T}u_1 = 0$ and $u_2 = 0$, we have
\begin{equation*}
    \begin{aligned}
        \Tilde{\mathcal{W}}_{x_*}\left(u, B_{x_*}\lambda_* \right) &\overset{\eqref{2-3}}{=} \D\left(I_n - \begin{bmatrix}
    \bar{B}_{x} & \\
    & \I_{n-j}
    \end{bmatrix}
    \begin{bmatrix}
    \bar{B}_{x} & \\
    & \I_{n-j}
    \end{bmatrix}^{\dagger}\right)[u] \cdot B_{x_*}\lambda_* \\
    &= -\begin{bmatrix}
    \D\bar{B}_{x} & \\
    & 0_{n-j}
    \end{bmatrix}
    \begin{bmatrix}
u_1 \\ u_2
\end{bmatrix}
    \cdot
     \begin{bmatrix}
    \bar{B}_{x_*}^{\dagger} & \\
    & \I_{n-j}
    \end{bmatrix}
     \begin{bmatrix}
        \bar{B}_{x_*}\lambda_*  \\
    \hat{B}_{x_*}\lambda_*
    \end{bmatrix} \\
    &\quad- \begin{bmatrix}
    \bar{B}_{x_*} & \\
    & I_{n-j}
    \end{bmatrix}\cdot
     \begin{bmatrix}
    \D\bar{B}_{x}^{\dagger} & \\
    & 0_{n-j}
    \end{bmatrix}\begin{bmatrix}
u_1 \\ u_2
\end{bmatrix}\cdot
    \begin{bmatrix}
        \bar{B}_{x_*}\lambda_*  \\
    \hat{B}_{x_*}\lambda_*
    \end{bmatrix}
    \\
     &\overset{(a)}{=}-\Proj_{\T_{x_*}\mathcal{N}}\left(\begin{bmatrix}
    \D\bar{B}_{x} & \\
    & 0_{n-j}
    \end{bmatrix}
    \begin{bmatrix}
u_1 \\ u_2
\end{bmatrix}
    \cdot
     \begin{bmatrix}
    \bar{B}_{x_*}^{\dagger} & \\
    & \I_{n-j}
    \end{bmatrix}
     \begin{bmatrix}
        \bar{B}_{x_*}\lambda_*  \\
    \hat{B}_{x_*}\lambda_*
    \end{bmatrix}\right)\\
    &=-\Proj_{\T_{x_*}\mathcal{N}}\left(
   \begin{bmatrix}
       \D \bar{B}_x[u_1]\cdot \lambda_*\\
       0_{(n-j)\times 1}
   \end{bmatrix}
    \right),
    \end{aligned}
\end{equation*} 
where (a) follows from $\Tilde{W}_{x_*}(u, B_{x_*}\lambda_{x_*}) = \Proj_{\T_{x_*}\mathcal{N}}\left(\Tilde{W}_{x_*}(u, B_{x_*}\lambda_{x_*}) \right)$. Therefore,
\begin{equation}\label{3-32}
    \left\langle u, \Tilde{\mathcal{W}}_{x_*}(u, B_{x_*}\lambda_*)\right\rangle = -u_1^{\T}  \D \bar{B}_x[u_1]\cdot \lambda_*.
\end{equation}
Furthermore, for any $\eta \in \T_{x_*}\M$ satisfying $B_{x_*}^{\T}\eta = 0$, we have
$$
\begin{aligned}
\mathcal{W}_{x_*}(\eta,B_{x_*}\lambda_*) &= \D(\I_n - B_{x}B_{x}^{\T})[\eta]\cdot B_{x_*}\lambda_*\\
&= -\D B_x[\eta]\cdot B_{x_*}^{\T}B_{x_*}\lambda_* - B_{x_*}\cdot \D B_{x}^{\T} [\eta]\cdot B_{x_*}\lambda_*\\
&=-\Proj_{\T_{x_*}\M} \D B_x[\eta]\cdot \lambda_*,
\end{aligned}
$$
where $\mathcal{W}_{x_*}$ denotes the Weingarten map on manifold $\mathcal{M}$. We take $\eta = u \in \T_{x_*}\mathcal{N} \subset \T_{x_*}\M$, then
\begin{equation*}
\begin{aligned}
\left\langle u, \mathcal{W}_{x_*}(u, B_{x_*}\lambda_*)\right\rangle &= -u^{\T}  \D B_x[u]\cdot \lambda_*\\
&= -u_1^{\T}  \D \bar{B}_x[u_1]\cdot \lambda_*\\
&\overset{\eqref{3-32}}{=}\left\langle u, \Tilde{\mathcal{W}}_{x_*}(u, B_{x_*}\lambda_*)\right\rangle
\end{aligned}
\end{equation*}
Therefore, for any $u=\begin{bmatrix}
u_1 \\ u_2
\end{bmatrix}\in \T_{x_*}\mathcal{N}$ satisfying $\bar{B}_{x_*}^{\T}u_1 = 0$ and $u_2 = 0$, we have 
\begin{equation*}
\begin{aligned}
    0 \le \left\langle \Hess\Tilde{F}(x_*)[u], u\right\rangle &= \left[u_1^{\mathrm{T}}, 0\right] \nabla^2 f(x_*)\begin{bmatrix}
    u_1\\0
    \end{bmatrix} - [u_1^{\T}, 0] \Tilde{\mathcal{W}}_{x_*}\left(\begin{bmatrix}
    u_1\\0
    \end{bmatrix}, B_{x_*}\lambda_* \right)\\
     &= \left[u_1^{\mathrm{T}}, 0\right] \nabla^2 f(x_*)\begin{bmatrix}
    u_1\\0
    \end{bmatrix} - [u_1^{\T}, 0] \mathcal{W}_{x_*}\left(\begin{bmatrix}
    u_1\\0
    \end{bmatrix}, B_{x_*}\lambda_* \right)\\
    &=  u_1^{\T} H_{x_*}^{(11)} u_1 - u_1^{\T} L_{x_*}^{(11)} u_1.
\end{aligned}
\end{equation*}
Therefore, on the subspace $\{w: \bar{B}_{x_*}^{\T}w = 0\}$, we have
\[
H_{x_*}^{(11)}- L_{x_*}^{(11)} \succeq 0.
\]
Furthermore, if $H_{x_*}^{(11)}- L_{x_*}^{(11)} \succ 0$ on the subspace
$\{w: \bar{B}_{x_*}^{\T}w = 0\}$, we have $J(x_*): \T_{x_*}\M \to \T_{x_*}\M$, i.e.,
\[
\begin{aligned}
J(x_*) = -\begin{bmatrix}
\bar{B}_{x_*} \bar{B}_{x_*}^{\dagger} + t(\I_j - \bar{B}_{x_*} \bar{B}_{x_*}^{\dagger})(H_{x_*}^{(11)}- L_{x_*}^{(11)}) & t(\I_j - \bar{B}_{x_*} \bar{B}_{x_*}^{\dagger})(H_{x_*}^{(12)}- L_{x_*}^{(12)})\\
0_{(n-j) \times j}& \I_{n-j}
\end{bmatrix},
\end{aligned}
\]
where $\bar{B}_{x_*}^{\dagger} =  (\bar{B}_{x_*}^{\T}\bar{B}_{x_*})^{-1}\bar{B}_{x_*}^{\T}$. In order to verify $J(x_*)$ is nonsingular, we only need to verify $J_{x_*}^{(11)} = \bar{B}_{x_*} \bar{B}_{x_*}^{\dagger} + t(\I_j - \bar{B}_{x_*} \bar{B}_{x_*}^{\dagger})(H_{x_*}^{(11)}- L_{x_*}^{(11)})$ is nonsingular. For any $\eta \in \R^{j}$, we have
\[
J_{x_*}^{(11)} \eta = \underbrace{\bar{B}_{x_*} \bar{B}_{x_*}^{\dagger} \eta}_{s_1} + \underbrace{t(\I_j - \bar{B}_{x_*} \bar{B}_{x_*}^{\dagger}) (H_{x_*}^{(11)}- L_{x_*}^{(11)})\eta}_{s_2},
\]
where $s_1$  is orthogonal to $s_2$. Thus, if $J_{x_*}^{(11)}\eta = 0$, then $s_1 = 0$ and $s_2 = 0$. Since $s_1 = \bar{B}_{x_*} \bar{B}_{x_*}^{\dagger} \eta = 0$, then $\eta = (\I_j - \bar{B}_{x_*} \bar{B}_{x_*}^{\dagger}) \eta$. According to $s_2 = 0$, we have 
\[
\begin{aligned}
0 = s_2 &= t(\I_j - \bar{B}_{x_*} \bar{B}_{x_*}^{\dagger}) (H_{x_*}^{(11)}- L_{x_*}^{(11)})\eta\\
&= t(\I_j - \bar{B}_{x_*} \bar{B}_{x_*}^{\dagger})(H_{x_*}^{(11)}- L_{x_*}^{(11)}) (\I_j - \bar{B}_{x_*} \bar{B}_{x_*}^{\dagger}) \eta,
\end{aligned}
\]
then $0 = \eta^{\T} s_2 = t\eta^{\T}(\I_j - \bar{B}_{x_*} \bar{B}_{x_*}^{\dagger})(H_{x_*}^{(11)}- L_{x_*}^{(11)}) (\I_j - \bar{B}_{x_*} \bar{B}_{x_*}^{\dagger}) \eta$. With the condition $H_{x_*}^{(11)}- L_{x_*}^{(11)} \succ 0$ on the subspace
$\{\omega: \bar{B}_{x_*}^{\T}\omega = 0\}$, we have $\eta = 0$. Therefore, $J_{x_*}^{(11)}\eta = 0$ if and only if $\eta = 0$, which implies $J_{x_*}^{(11)}$ is nonsingular. It follows that we have $J(x_*)$ is nonsingular.
\end{proof}

\section{Globalization}\label{sec:glo}
In Section~\ref{sec:RPN}, the Riemannian proximal Newton method in Algorithm~\ref{alg:RPN} is guaranteed to converge locally and quadratically. However, it may not converge globally. In this section, a hybrid version by concatenating the Riemannian proximal gradient method in~\cite{2020Proximal} and Algorithm~\ref{alg:RPN} is given and stated in Algorithm~\ref{alg:RPN_s}.

Specifically, if the norm of the search direction $v_k$ is not sufficiently small, then the search direction is taken to be the proximal gradient step, see Steps from~\ref{alg:RPN_s:st01} to~\ref{alg:RPN_s:st02}, which has global convergence property and is well-defined~\cite[Lemma 5.2]{2020Proximal}. Otherwise, the proximal Newton step is used. Next, we show that if $\|v_k\|$ is sufficiently small and certain reasonable assumptions hold, then the iterate $x_k$ is sufficiently close to a local minimizer $x_*$. Therefore, the local quadratic convergence rate follows from Theorem~\ref{the_super}.

\begin{algorithm}
\caption{A globalized version of the Riemannian proximal Newton method (RPN-G)}\label{alg:RPN_s}
\begin{algorithmic}[1] 
\Require
$x_0 \in \mathcal{M}$, $t > 0$, $\rho\in (0,\frac12]$,
$\epsilon > 0$
\State $k = 0$;
\Loop 
\State Compute $v_k$ by solving~\eqref{eq:subforv} with $t$;
\If { $\|v_k\| \leq \epsilon$ }
\State $K = k$ and break;
\EndIf
\State Set $\alpha = 1$;\label{alg:RPN_s:st01}
\While{$F(R_{x_k}(\alpha v_k)) > F(x_k) - \frac{1}{2}\alpha\|v_k\|^2$}  
\State $\alpha = \rho \alpha$;
\EndWhile 
\State $x_{k+1} = R_{x_k}(\alpha v_k)$; \label{alg:RPN_s:st02}
\State $k = k + 1$;
\EndLoop 
\For{$k = K+1,K+2,\dots$}
\State Compute $v_{k}$ by solving~\eqref{eq:subforv} with $t$;
\State Compute $u_{k}$ by solving~\eqref{3-3};
\State  $x_{k+1} = R_{x_{k}}(u_{k})$;
\EndFor
\end{algorithmic}
\end{algorithm}

\begin{theorem}\label{the_4-2}
 Suppose that 
    \begin{itemize}
        \item[(a)] $\M$ is a compact Riemannian submanifold of $\R^n$, the retraction $R$ is globally defined, $f:\R^n \to \R$ has Lipschitz continuous gradient with Lipschitz constant $L$ in the convex hull of $\M$, i.e., $\|\nabla f(x) - \nabla f(y)\|_{\F} \leq L \|x - y\|_{\F}$ for all $x, y$ in the convex hull of $\M$;
    \end{itemize} 

    Let $\{\hat{x}_k\}$ denote the sequence generated by Algorithm~\ref{alg:RPN_s} with $\epsilon = 0$ and $x_*$ denote an accumulation point of $\{\hat{x}_k\}$.
Then  $x_*$ is a stationary point. Further, suppose that
    \begin{itemize}
        \item[(b)] $x_*$ is a strict local optimal point and isolated stationary point of $F$, i.e., there exists a neighborhood $\mathcal{N}_{x_*}$ of $x_*$ such that for any $y \in \mathcal{N}_{x_*}$ and $y \neq x_*$, it holds that $F(y) > F(x_*)$ and $x_*$ is the only stationary point in $\mathcal{N}_{x_*}$.
        \item[(c)] The assumptions of Theorem~\ref{the_super} hold at $x_*$ (i.e., Assumption~\ref{assu-3-1} and Assumption~\ref{assu-3-2} hold, and $J(x_*)$ is nonsingular).
    \end{itemize}
    
    Then there exists $\epsilon >0$, such that the iterates $\{x_k\}$ generated by Algorithm~\ref{alg:RPN_s} with the $\epsilon > 0$ converges to the local optimal $x_*$ and the local convergence rate is quadratic.
\end{theorem}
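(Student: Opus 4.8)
The plan is to view Algorithm~\ref{alg:RPN_s} as two phases: a Riemannian proximal gradient (ManPG) phase that runs until the search direction becomes short, followed by the Riemannian proximal Newton phase governed by Theorem~\ref{the_super}. The first assertion---that with $\epsilon=0$ every accumulation point $x_*$ of $\{\hat x_k\}$ is stationary---is the global convergence of ManPG: with $\epsilon=0$ the test $\|v_k\|\le\epsilon$ is met only if $v_k=0$ (i.e.\ $x_k$ is already stationary, in which case the iterates stall there and $x_*=x_k$), so $\{\hat x_k\}$ is exactly the ManPG sequence. Assumption~(a) guarantees, via~\cite[Lemma~5.2]{2020Proximal}, that the Armijo step sizes are bounded away from $0$; hence $F(\hat x_k)$ is nonincreasing, $\sum_k\|v_k\|^2<\infty$, and $\dist(\hat x_{k+1},\hat x_k)\to 0$, and any accumulation point is stationary by~\cite{2020Proximal}.

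Next I would upgrade ``accumulation point'' to ``limit''. Since $\M$ is compact, the set $L$ of accumulation points of $\{\hat x_k\}$ is nonempty and compact; because $\dist(\hat x_{k+1},\hat x_k)\to 0$, the standard fact that the cluster set of a bounded sequence with vanishing increments is connected shows $L$ is connected. By the previous paragraph $L$ consists of stationary points, and by assumption~(b) $x_*\in L$ is an isolated stationary point, so $\{x_*\}$ is relatively open and closed in $L$; connectedness forces $L=\{x_*\}$, whence $\hat x_k\to x_*$ (strictness of the minimum in~(b), together with continuity of $F$ and monotonicity of $F(\hat x_k)$, additionally gives $F(\hat x_k)\to F(x_*)$, which we only need for consistency).

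Now fix the neighborhood $\mathcal{V}$ of $x_*$ furnished by Theorem~\ref{the_super}. Using $v(x_*)=0$, the local bound $\|v(x)\|\le\gamma\|x-x_*\|$ from~\cite[Corollary~1]{Gowda2004}, and a uniform retraction bound $\|R_x(\alpha w)-x\|\le C\|\alpha w\|$ for $x\in\M$, $\alpha\in(0,1]$ (available since $\M$ is compact and $R$ is globally defined), choose a smaller neighborhood $\mathcal{V}_0\subseteq\mathcal{V}$ of $x_*$ with $R_x(\alpha v(x))\in\mathcal{V}$ for all $x\in\mathcal{V}_0$, $\alpha\in(0,1]$. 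By the previous paragraph there is $N$ with $\hat x_k\in\mathcal{V}_0$ for $k\ge N$. If $v(\hat x_k)=0$ for some $k<N$, then $x_*=\hat x_k$ and the conclusion is immediate; otherwise set
\[
\epsilon=\tfrac12\min_{0\le k<N}\|v(\hat x_k)\|>0 .
\]
With this $\epsilon$, Algorithm~\ref{alg:RPN_s} performs exactly the same ManPG updates as the $\epsilon=0$ run for as long as $\|v_k\|>\epsilon$, so $x_k=\hat x_k$ up to the first index $K$ with $\|v_K\|\le\epsilon$; such $K$ exists because $\|v(\hat x_k)\|\to 0$, and $K\ge N$ by the choice of $\epsilon$. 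Hence $x_K=\hat x_K\in\mathcal{V}_0$, and the point launching the Newton phase---$x_K$ itself or $R_{x_K}(\alpha_K v_K)$---lies in $\mathcal{V}$. Theorem~\ref{the_super}, whose hypotheses are precisely assumption~(c), then applies from that point onward, giving $x_k\to x_*$ with local quadratic rate; together with $x_k=\hat x_k\to x_*$ on the ManPG phase, the whole sequence $\{x_k\}$ converges quadratically to $x_*$.

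The main obstacle is the second paragraph: converting the hypothesized single accumulation point into convergence of the entire sequence. This rests on connectedness of the cluster set, which needs $\dist(\hat x_{k+1},\hat x_k)\to 0$, and establishing the latter requires the ManPG line-search step sizes to be uniformly bounded below---precisely where the global Lipschitz gradient hypothesis~(a) and compactness of $\M$ are used. A secondary technical point, the continuity/semismooth control of $v(x)$ near $x_*$ producing the nested neighborhoods $\mathcal{V}_0\subseteq\mathcal{V}$, is supplied by the analysis of Subsection~\ref{sec:RPNsemiv} and~\cite{Gowda2004}.
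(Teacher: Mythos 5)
Your proposal is correct, and its overall architecture matches the paper's: establish via ManPG theory that the $\epsilon=0$ run has $\|v_k\|\to 0$ and stationary accumulation points, upgrade the accumulation point $x_*$ to the limit of $\{\hat{x}_k\}$, and then choose $\epsilon$ below $\tfrac12\min_{k<N}\|v(\hat{x}_k)\|$ so that the two runs coincide until the switch, which therefore occurs only after the iterates have entered the neighborhood $\mathcal{V}$ of Theorem~\ref{the_super}; this last phase is essentially the paper's $\Delta_k$-argument, including the finite-termination caveat when some $v(\hat{x}_k)=0$. The genuinely different ingredient is the middle step. The paper argues by a capture-theorem device: using strict local optimality it builds the trapping set $\mathbb{B}(x_*,\delta)\cap\{z: F(z)<F(x_*)+(F_{\delta,x_*}-F(x_*))/2\}$, shows it lies at positive distance from the sphere $\mathcal{B}(x_*,\delta)$, and uses the descent property of ManPG together with step displacements controlled by $\|v_k\|\to 0$ to show that once an iterate enters this set it never leaves, after which convergence follows because $x_*$ is the only stationary point there. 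You instead invoke the Ostrowski-type fact that a bounded sequence with vanishing increments has a connected cluster set, note that the cluster set consists of stationary points, and use isolatedness of $x_*$ to make $\{x_*\}$ clopen in the cluster set, forcing $\hat{x}_k\to x_*$. Both routes are sound; yours uses only the ``isolated stationary point'' half of assumption (b) (strictness is, as you note, not needed), at the price of appealing to the connected-cluster-set theorem, while the paper's capture argument is self-contained and makes explicit where strict local optimality and the descent property enter. Two minor remarks: your guard neighborhood $\mathcal{V}_0$, protecting against a proximal-gradient step taken after the test, is harmless but unnecessary, since in Algorithm~\ref{alg:RPN_s} the loop breaks before the line search and the Newton phase is launched from $x_K$ itself; and the uniform lower bound on the Armijo step sizes is the content of the global convergence analysis of ManPG (Theorem 5.5 there) rather than of its Lemma 5.2, though the mechanism you describe (Lipschitz gradient on the convex hull of the compact $\M$ plus boundedness of $v_k$) is exactly the right one.
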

\begin{proof}
Since $\epsilon = 0$, then Algorithm~\ref{alg:RPN_s} degenerates to ManPG~\cite{2020Proximal}. It follows from (a) and~\cite[Theorem~5.5]{2020Proximal} that $\lim_{k\to \infty}\|v_k\| = 0$ and $x_*$ is a stationary point of~\eqref{1-1}.  
    
Assume (b) and (c) hold. Next, by an argument akin to the capture theorem~\cite[Theorem~4.4.2]{absil2009}, we show that $\{\hat{x}_k\}$ converges to $x_*$. Specifically, since $x_*$ is a strict local optimal point of $F$, there exists $\delta > 0$ such that for any $y \in \mathbb{B}(x_*, \delta) := \{ z \in \mathcal{M} \mid \mathrm{dist}(x_*, z) \leq \delta \}$ and $y \neq x_*$, it holds that $F(y) > F(x_*)$. Let $\mathcal{B}(x_*, \delta) = \{ z \in \mathcal{M} \mid \mathrm{dist}(x_*, z) = \delta \}$ denote the boundary of $\mathbb{B}(x_*, \delta)$ and define $F_{\delta, x_*} = \inf_{y \in \mathcal{B}(x_*, \delta) } F(y)$. Since $\mathcal{B}(x_*, \delta)$ is compact, it holds that $F_{\delta, x_*} > F(x_*)$. Let $\mathcal{N}_{x_*}$ denote $\mathbb{B}(x_*, \delta) \cap \{z \in \mathcal{M} \mid F(z) < F(x_*) + \left(F_{\delta, x_*} - F(x_*)\right) / 2 \}$. Therefore, it holds that $\mathrm{dist}(\mathcal{B}(x_*, \delta), \mathcal{N}_{x_*}) := \inf_{z_1 \in \mathcal{B}(x_*, \delta), z_2 \in \mathcal{N}_{x_*}} \mathrm{dist}(z_1, z_2) > 0$, otherwise, it violates the continuity of $F$. Since $x_*$ is an accumulation point of $\{\hat{x}_k\}$ and $\|v_k\| \rightarrow 0$, there exists $K > 0$ such that for all $k > K$, inequality $\|v_k\| < \mathrm{dist}(\mathcal{B}(x_*, \delta), \mathcal{N}_{x_*})$ holds, and there exists $J > K$ such that $\hat{x}_J \in \mathcal{N}_{x_*}$. Since ManPG is a descent algorithm in the sense that $F(\hat{x}_k) > F(\hat{x}_{k+1})$ for all $k$, we have $\hat{x}_k \in \mathcal{N}_{x_*}$ for all $k \geq J$. Since $x_*$ is the only stationary point of $F$ in $\mathcal{N}_{x_*}$, $\hat{x}_k$ converges to~$x_*$.
   
   Since $\hat{x}_k$ converges to $x_*$, there exists $\bar{K} > 0$ such that for all $k > \bar{K}$, $\hat{x}_k \in \mathcal{V}$, where $\mathcal{V}$ is the neighborhood of $x_*$ defined in Theorem~\ref{the_super}. Define $\Delta_k = \min_{j \leq k} \|v_j\|$. If $\Delta_k = 0$, then there exists $j \leq k$ such that $v_j = 0$. By the definition of Algorithm~\ref{alg:RPN_s}, we have $x_{j} = x_{j + 1} = \ldots$. Since $x_*$ is the limit point of $\{x_k\}$, it holds that $x_j = x_{j + 1} = \ldots = x_*$, which implies finite step termination. Suppose that $\Delta_k > 0$ for all $k$. Let $\epsilon < \Delta_{\bar{K}}$. It follows that $x_k = \hat{x}_k$ for all $k \leq \bar{K}$ and $\{x_k\}_{k \geq \bar{K}}$ converges to $x_*$ quadratically by Theorem~\ref{the_super}.
\end{proof}

\begin{remark}
    The global and local quadratic convergence of Algorithm~\ref{alg:RPN_s} by Theorem~\ref{the_4-2} requires 
    a sufficiently small $\epsilon$. However, if the value of $\epsilon$ is too small, then it is hard to observe quadratic convergence behavior. In the numerical experiments, 
    $\epsilon$ is chosen empirically.
\end{remark}

\section{Numerical Experiments}\label{sec:Num}

In this section, we perform numerical experiments to test the efficiency of our proposed RPN for solving the sparse PCA given as
\begin{equation}\label{spca:1}
\min_{X\in \mathrm{St}(n,r)} -\trace(X^{\T} A^{\T} A X) + \mu \|X\|_1,
\end{equation}
where $\mathrm{St}(n,r) = \{X\in \R^{n\times r}: X^{\T}X = \I_r\}$ is the Stiefel manifold, $A\in \R^{m\times n}$ is the data matrix, $\mu >0$ is the sparsity inducing regularization parameter.
The main benefit of Sparse PCA arises in high-dimensional setting, when, due to the introduced sparsity in the principal vectors, it remains a consistent estimator, while the classical PCA fails~\cite{jolliffe2003}. The optimization problem arising from sparse PCA has been used as a benchmark to test other Riemannian proximal methods, such as~\cite{2020Proximal,Huang2022a}.   
All experiments are performed in MATLAB R2022b on a standard PC with 2.8 GHz CPU (Inter Core i7).

We first numerically demonstrate in Subsection~\ref{subsec:naive} that the naive second order generalization, which comes from simply replacing the Euclidean gradient and Hessian with their Riemannian counterparts, does not achieve local superlinear convergence. In Subsection~\ref{subsec:loc_sup}, we verify our theoretical results showing that RPN does achieve quadratic convergence, and finally, in Subsection~\ref{subsec:compare} and Subsection~\ref{subsec:compare2} we compare its performance against ManPG.

\subsection{Naive Generalization}\label{subsec:naive}

The naive analogue in the Riemannian setting is constructed by simply replacing the Euclidean gradient and Hessian of~\eqref{1-3} by its Riemannian counterparts:
\begin{equation}\label{Rie_naive}
\begin{cases}v_k = \argmin_{v \in \T_{x_k} \M}\ f(x_k) + \left\langle\grad f\left(x_k\right), v\right\rangle+\frac{1}{2}\left\langle v, \Hess f(x_k) [v]\right\rangle + h\left(x_k+v\right), \\ x_{k+1}= R_{x_k}(v_k).\end{cases}
\hspace{-0.5em}
\end{equation}
We construct a simple example to show that \eqref{Rie_naive} generally does not yield a superlinear convergence when applied to the simple case of sparse PCA with $r=1$ defined over a unit sphere, i.e.,
\begin{equation}\label{spca:0}
\min_{x\in \mathbb{S}^{n-1}} -x^{\T} A^{\T} A x + \mu \|x\|_1.
\end{equation}
Essentially, the goal of \eqref{spca:0} is to compute the loading eigenvectors of $A^{\T} A$ while promoting sparsity. 

We handcraft the data matrix $A$ in the following way. Let $\Sigma = \diag\{[20, 0.1, 0.05]\}$ such that $\Sigma^2$ is the matrix of nonzero eigenvalues values of $A^TA$ and its corresponding eigenvectors $U$ are defined as
\[
 U = \begin{bmatrix}
     \I_3\\ 0_3
 \end{bmatrix}\in \R^{6\times 3},
\]
where $\I_3$ is $3\times 3$ identity matrix and $0_3$ is the $3\times 3$ matrix of zeros. Constructing $A = \Sigma U^{\T} \in \R^{3\times 6}$ results in
\[
 A^{\T} A = \begin{bmatrix}
     \Sigma^2 & 0_3\\
     0_3& 0_3
 \end{bmatrix},
\]
making an optimal solution of~\eqref{spca:0} to be $x_* = [1,0,0,0,0,0]^{\T}$. According to~\eqref{eq:Rhess}, for any $\eta_{x_*}\in \T_{x_*}\mathbb{S}^{n-1}$,
\[
 \begin{aligned}
    \eta_{x_*}^{\T} \Hess f(x_*)[\eta_*] &= \eta_{x_*}^{\T}\left[\Proj_{x_*} \left(\nabla^2 f(x_*) [\eta_{x_*}]\right) + \W_{x_*}\left(\eta_{x_*}, \Proj_{x_*}^{\perp} \big(\nabla f(x_*)\big) \right)\right]\\
    &=\eta_{x_*}^{\T}\left[(\I_6 - x_* x_*^{\T}) (-2 A^{\T}A \eta_{x_*}) + 2(x_*^{\T} A^{\T} A x_*) \eta_{*}\right]\\
    &= 2 \eta_{x_*}^{\T} \left((x_*^{\T}A^{\T} A x_*) \I_6 - A^{\T} A 
    \right)\eta_{x_*}^{\T}.
 \end{aligned}
\]
It can easily be verified that $\Hess f(x_*)$ is positive definite on the tangent space $\T_{x_*} \mathbb{S}^{n-1}$.
 
Since the positive definiteness is a local property, we add a sufficiently small perturbation to $A$, e.g., $A = A + 0.1 * R_1$, and select the initial point $x_0 = x_{*} + 0.1 * R_2$,  where the entries of 
 $R_1\in \R^{3\times 6}$ and $R_2 \in \R^{6\times 1}$ are sampled from the standard normal distribution $\mathcal{N}(0,1)$. In our experiments, we solve $v_k$ in \eqref{Rie_naive} by the semismooth Newton method similar to ManPG algorithm in~\cite{2020Proximal}.
We use the retraction $R_x(\eta_x) = (x+\eta_x)/\|x+\eta_x\|$ on the sphere manifold, where $x\in \mathbb{S}^{n-1}$, $\eta_x \in \T_x \mathbb{S}^{n-1}$. We compare the naive generalization, denoted RPN-N, with ManPG and our algorithm RPN in Figure~\ref{fig:naive}.
We clearly see in the Figure~\ref{fig:naive} that 
the naive generalization (RPN-N) fails to achieve the superlinear rate of convergence, exhibiting instead linear convergence, while our proposed RPN does. 

\begin{remark}
The naive generalization can be viewed as a specific instance of~\eqref{manpqn}, which was shown to have local linear convergence in~\cite{Wang2023}. On the other hand, a possible explanation why RPN-N seems to only achieve linear convergence rate is that it only considers the first-order approximation of $R_x(v)$ in $h(R_x (v))$, i.e., $h(x+v)$. However, to perform a second-order accurate approximation of $R_x(v)$ or $h(R_x(v))$ would be a computationally expensive task.
 \end{remark}
 
\begin{figure}
\centering 
\includegraphics[width=0.5\textwidth]{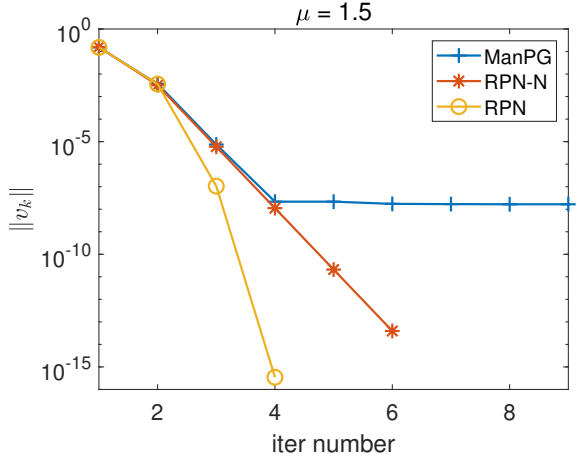}
\caption{Comparisons with RPN and ManPG algorithms.}\label{fig:naive} 
\end{figure}

\subsection{Local quadratic convergence of Algorithm~\ref{alg:RPN}}\label{subsec:loc_sup}

We proceed by testing the local quadratic convergence rate for different problem sizes in sparse PCA. For this task, we will use the polar retraction~\cite[Example~4.1.3]{absil2009} defined as $
R_x(\eta_x) = (x + \eta_x)(\I_r + \eta_x^{\T} \eta_x)^{-1/2}$, where $x\in \mathrm{St}(n,r)$, $\eta_x\in \T_x \mathrm{St}(n,r)$. We generate the random data matrix $A\in \R^{m\times n}$ such that its entries are drawn from the standard normal distribution $\mathcal{N}(0,1)$. In Algorithm~\ref{alg:RPN}, we set $m=50$ and use $t = 1/(2\|A\|_2^2)$, which corresponds to the choice of the stepsize in~\cite{2020Proximal,Huang2022a}.

In order to observe the local quadratic convergence, we first choose an initial point $x_0$ that is sufficiently close $x_*$, where $x_*$ is a stationary point of \eqref{1-1}.
Theorem~\ref{the_4-2} shows that
$x_k$ is sufficiently close to $x_*$ when $v_k$ is sufficiently small, so we first run the ManPG algorithm to choose a point $x_k$ satisfying $\|v_k\|_{\F} \le 10^{-4}$ as the initial point $x_0$ of Algorithm~\ref{alg:RPN}. 
Figure~\ref{Fig:1} shows $\|v_k\|$ versus the number of iterations for multiple values of $n$, $r$, and $\mu$. We can see that the proposed method RPN empirically shows quadratic convergence results which is consistent with the theoretical result.

\begin{figure}[H]
\centering 
\includegraphics[width=0.325\textwidth]{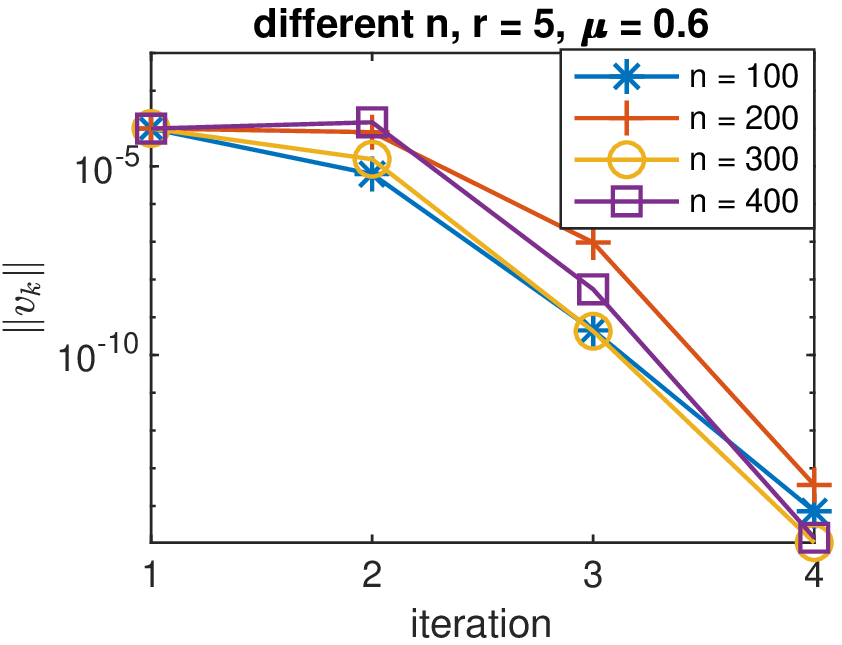}
\includegraphics[width=0.325\textwidth]{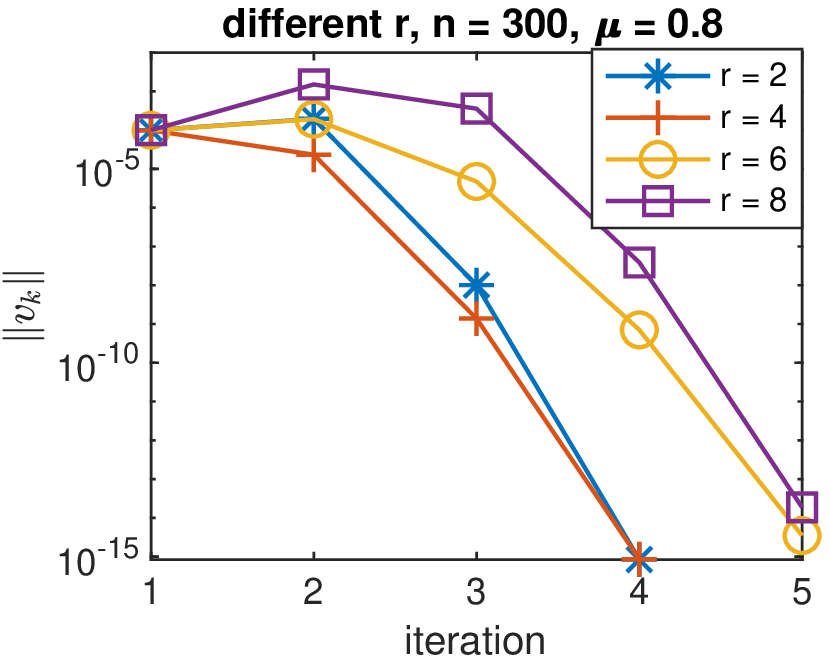}
\includegraphics[width=0.325\textwidth]{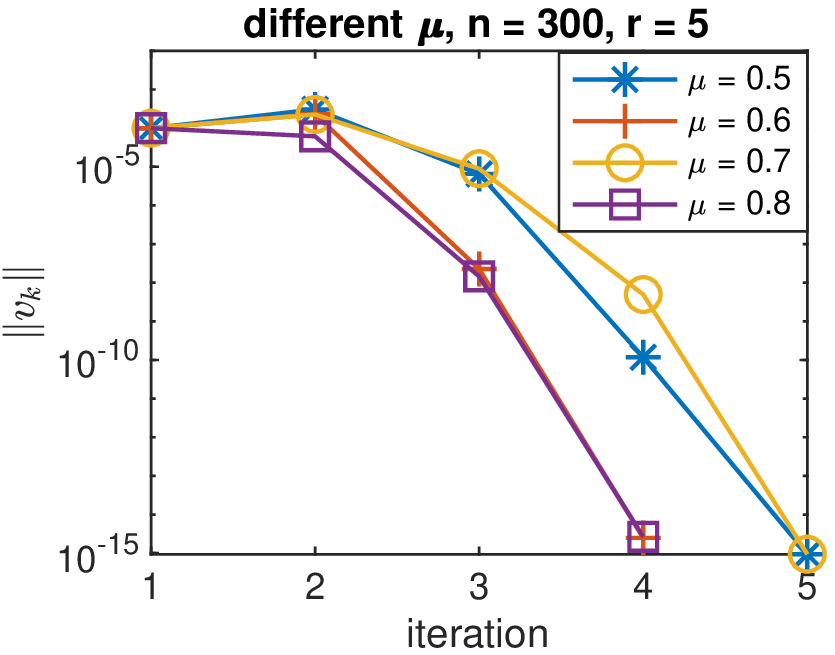}
\label{fig:1} 
\caption{ 
Random data: the norm of search direction for the SPCA. Left: different $n = \{100,200,300,400\}$ with $r = 5$ and $\mu = 0.6$; Middle:different $r = \{2,4,6,8\}$ with $n = 300$ and $\mu = 0.8$; Right: different $\mu = \{0.5,0.6,0.7,0.8\}$ with $n = 300$ and $r = 5$} \label{Fig:1}
\end{figure}

\subsection{Comparisons with ManPG algorithm on the sphere ($r = 1$)}\label{subsec:compare}

In this section, we consider the version of sparse PCA with $r = 1$, which admits a simple computation of $u_k$ by solving the linear equations~\eqref{3-3}. Thus
\eqref{spca:1} is simplified as 
\begin{equation}\label{spca:2}
\min_{x\in \mathbb{S}^{n-1}} -x^{\T} A^{\T} A x + \mu \|x\|_1,
\end{equation}
where the Stiefel manifold reduces to the unit sphere, as discussed in~\cite{jolliffe2003,d2004direct}.
We compare RPN-G, as stated in Algorithm~\ref{alg:RPN_s}, with the proximal Riemannian gradient method ManPG in~\cite{2020Proximal}.

The parameters are set as those in Subsection~\ref{subsec:loc_sup}. $\epsilon$ is set to be $10^{-4}$. Linear system~\eqref{3-3} is solved by the built-in Matlab function \textit{cgs}.
ManPG does not terminate until the number of iterations attains the maximal iteration (3000). RPN-G does not terminate until $\|v_k\| \le 10^{-12}$.  

\textbf{Random data:} The matrix $A$ is generated as that in Subsection~\ref{subsec:loc_sup}. The results with multiple values of $n$ and $\mu$ are reported in Table~\ref{tab:1} and Figure~\ref{fig:cpu}. Compared to ManPG, the proposed method RPN-G is able to obtain higher accurate solutions in the sense that $\|v_k\|$ from RPN-G is in double precision whereas that from ManPG is only in single precision. In addition, the number of Riemannian proximal Newton steps is not large and usually is only 5-6 iterations. It follows that the RPN-G is faster than ManPG when a highly accurate solution is needed, as shown in Figure~\ref{fig:cpu}.

\textbf{Synthetic data:} The comparisons are repeated for synthetic data, which is generated by following the experiments in \cite{Huang2022a,SpaSM}. Specifically, the five principal components shown in Figure~\ref{figPCs} are repeated $m/5$ times to obtain $m \times n$ noise-free matrix. Then the data matrix $A$ is created by adding each entry of the noise-free matrix by i.i.d. random value drawn from $\mathcal{N}(0,0.25)$. We set $m = 400$, $n= 4000$ and $\mu = 1.2$. 
The left and right plots in Figure~\ref{fig:sy_data} show $\|v_k\|$ versus the number of iterations and $\|v_k\|$ versus CPU time, respectively. The behavior of RPN-G and ManPG on the synthetic data is similar to that on the random data, that is, RPN-G converges faster in the sense of both computational time and the number of iterations.

\begin{figure}[ht!]
\centering
\includegraphics[width=0.7\textwidth]{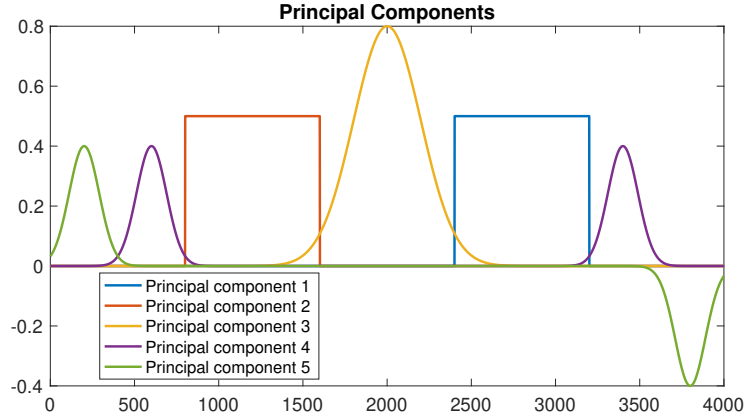}
\caption{
The five principal components used in the synthetic data.
}
\label{figPCs}
\end{figure}

\begin{table}[ht]
    \centering
    \caption{An average result of 5 random runs for random data with different settings of $(n,\mu)$. The subscript $k$ indicates a scale of $10^k$. iter-v denotes iterations to reach the optimal $\|v_k\|$ for the first time, for example, in the second row, 897 denotes the number of iterations that make $\|v_k\|$ firstly attain $10^{-8}$. iter-u denotes the number of using the new search direction $u_k$.}
    \begin{tabular}{c|c|cccccc}
    \hline$(n,\mu)$ & Algo & iter & iter-v & iter-u & $f$ & sparsity & $\|v_k\|$ \\
    \hline 
    (5000,1.5)   & ManPG  & 3000 & 897 & -    & $-4.59_1$ & 0.37 & $7.41_{-8}$ \\
    (5000,1.5)   & RPN-G   & 334 & - & 5  & $-4.59_1$  & 0.37 & $4.53_{-16}$ \\
    \hline
    (10000,1.8)   & ManPG  & 3000 & 1736 & -  & $-1.02_2$ & 0.32 & $2.19_{-8}$ \\
    (10000,1.8)   & RPN-G    & 580 &- & 6  & $-1.02_2$ & 0.32 & $5.69_{-16}$\\
    \hline
    (30000,2.0)   & ManPG  & 3000 & 1283 & -  & $-3.98_2$ & 0.22 & $1.19_{-8}$ \\
    (30000,2.0)   & RPN-G    & 347 & - & 5  & $-3.98_2$ & 0.22 & $5.25_{-15}$ \\
    \hline 
    (50000,2.2)   & ManPG  & 3000 & 1069  & -  & $-7.06_2$ & 0.18 & $4.56_{-7}$ \\
    (50000,2.2)   & RPN-G    & 789 & -  & 5   & $-7.06_2$ & 0.18 & $1.41_{-14}$ \\
    \hline 
    (80000,2.5)   & ManPG  & 3000 & 834 & -  & $-1.17_3$ & 0.17 & $1.41_{-6}$ \\
    (80000,2.5)   & RPN-G    & 839 & - & 6 & $-1.17_3$ & 0.17 & $1.94_{-15}$ \\
    \hline
    \end{tabular}
    \label{tab:1}
\end{table}

\begin{figure}
\centering 
\includegraphics[width=370pt]{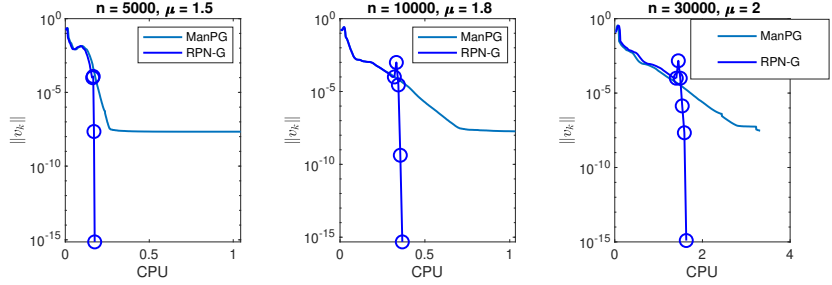}
\caption{Random data: the norm of search direction $v_k$ versus CPU for different $(n, \mu)$, where the blue circle indicates the use of the new direction $u_k$.}\label{fig:cpu} 
\end{figure}

\begin{figure}
\centering 
\includegraphics[width=370pt]{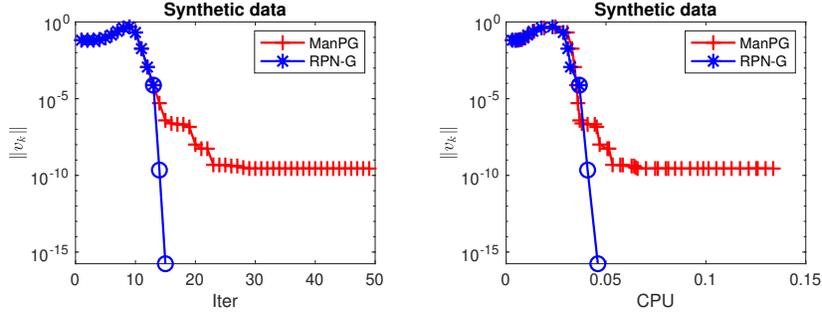}
\caption{Plots of $\|v_k\|$ versus iterations and CPU times respectively, where $\|v_k\|$ is the norm of search direction, data matrix $A\in \R^{4000\times 400}$ is from the synthetic data, $\mu$ is set to be 1.2. Note that the blue circle indicates the use of the new direction $u_k$.}\label{fig:sy_data} 
\end{figure}

\begin{remark} \label{flop}
Here, we discuss the numbers of floating point arithmetic (flop) for computing $v_k$ and $u_k$ per iteration. For sparse PCA with $r = 1$, the main computation cost is on $Ax$ and $A^{\T}Ax$ for ManPG, and its flops is $\mathcal{O}(4mn)$, where $A\in \R^{m\times n}$. Besides the computations in the function value and gradient evaluation, there are computations in the semismooth Newton iterations, where its dominant computational costs focus on  $\Psi (\lambda) = B_{x_k}^T \left( \prox_{t h} \bigl( x_k - t \left[\nabla f(x_k) + B_{x_k} \lambda \right] \bigr) - x_k \right)$ in~\eqref{eq:Psilambda} and 
\[
J_{\Psi}(\lambda_k)[d] = B_{x_k}^T \left(\partial \prox_{t h} (x_k - t \left[\nabla f(x_k) + B_{x_k} \lambda \right]) \right) \circ \left(-t B_{x_k} d \right),
\]
where $\circ$ denotes the entrywise product of two matrices. The computations of $\Psi(\lambda)$ and $J_{\Psi}(\lambda_k)[d]$ in one evaluation are respectively $\mathcal{O}(4n)$  and $\mathcal{O}(6n)$. Thus, the total computation cost in the semismooth method is on the order of $\mathcal{O}(ssn_k * 10n)$, where $ssn_k$ denotes the iteration number of semismooth Newton at $k$-th step. Therefore, the total complexity of ManPG in one evaluation is on the order of $\mathcal{O}(4mn + ssn_k * 10n)$.
    
For RPN and RPN-G, the direction $u_k$ is computed by additionally solving $J(x_k)[u_k] = - v_k$. We first compute $v_k$ by semismooth Newton method and its cost is also $\mathcal{O}(4mn + ssn_k * 10n)$. When $u_k$ is solved by the built-in Matlab function $cgs$, the dominant computational cost per iteration comes from multiplying a vector by the matrix $J(x_k)$. Assuming that $cgs$ takes $inner_k$ iterations for computing $u_k$, and the nonzero element of $x_k$ is $sp$, the computation of
\[
\begin{aligned}
    J(x_k)d =& -d + M_{x_k}d - M_{x_k} x_k (x_k^{\T} M_{x_k} x_k)^{-1} x_k^{\T} M_{x_k} d  \\
    & - t (M_{x_k} - M_{x_k} x_k (x_k^{\T} M_{x_k} x_k)^{-1} x_k^{\T} M_{x_k}) * (-2 A^{\T} A d + \lambda_k d)
\end{aligned}
\]
costs $\mathcal{O}( 2m(n+sp))$ flops. Thus, the overall computation of $u_k$ is $\mathcal{O}( inner_k * 2m(n+sp) + 4mn)$. Therefore, when the direction $u_k$ is computed, the total complexity of RPN and RPN-G in one evaluation is on the order of $\mathcal{O}(4mn + ssn_k * 10n + inner_k * 2m(n+sp))$. According to the numerical test, the inner iteration number for $cgs$ is usually 4 or 5 iterations.
\end{remark} 

\subsection{Comparisons with ManPG algorithm on the Stiefel manifold ($r > 1$)}\label{subsec:compare2}
\
In this section, we repeat the numerical experiments in Subsection~\ref{subsec:compare} for multiple values of $r$. The parameters are set as those in Subsection~\ref{subsec:compare}. The results are reported in Table~\ref{tab:stiefel}. The proposed method RPN-G is able to obtain higher accurate solutions in the sense that $\|v_k\|$ from RPN-G is in double precision whereas that from ManPG is only in single precision. In addition, the number of Riemannian proximal Newton steps is not large and usually is only 3-4 iterations. Since ManPG and RPN-G use different stopping criterion, their computational times are not reported in Table~\ref{tab:stiefel}. Instead, two typical runs for demonstrating the computational times are shown in Figure~\ref{fig:rand_syn_5}. We conclude that RPN-G converges faster in the sense of both computational time and the number of iterations.

\begin{table}[ht]
    \centering
    \caption{An average result of 5 random runs for random data with different settings of $(n,\mu, r)$. The subscript $k$ indicates a scale of $10^k$. iter-v denotes iterations to reach the optimal $\|v_k\|$ for the first time, for example, in the second row, 818 denotes the number of iterations that make $\|v_k\|$ firstly attain the order of $10^{-7}$. iter-u denotes the number of using the new search direction $u_k$.}
    \label{tab:stiefel}
    \begin{tabular}{cc|c|cccccc}
    \hline$\ \ \ (n,\mu)$\ \ \
    $r$ & Algo & iter & iter-v & iter-u & $f$ & sparsity & $\|v_k\|$ \\
    \hline 
    \ (200,0.6)\ 3   & ManPG  & 3000 & 818 & -    & $-7.93_0$ & 0.54 & $3.76_{-7}$ \\
    \ (200,0.6)\ 3   & RPN-G   & 612 & - & 3 & $-7.93_0$  & 0.54 & $8.09_{-15}$ \\
    \hline
      \ (300,0.8)\ 5   & ManPG  & 3000 & 384 & -    & $-9.16_0$ & 0.68 & $6.17_{-7}$ \\
    \ (300,0.8)\ 5   & RPN-G   & 245 & - & 4 & $-9.16_0$  & 0.68 & $2.71_{-14}$ \\
    \hline
    \ (500,0.6)\ 8   & ManPG  & 3000 & 1131 & -    & $-4.90_1$ & 0.45 & $1.03_{-7}$ \\
    \ (500,0.6)\ 8   & RPN-G   & 577 & - & 3  & $-4.90_1$  & 0.45 & $4.57_{-14}$ \\
    \hline
    \ \ (800,0.8)\ 10   & ManPG  & 3000 & 868 & -  & $-7.18_1$ & 0.50 & $3.11_{-7}$ \\
    \ \ (800,0.8)\ 10   & RPN-G    & 787 &- & 3  & $-7.18_1$ & 0.50 & $8.31_{-14}$\\
    \hline
    \end{tabular}
\end{table}

\begin{figure}
\centering 
\includegraphics[width=182pt]{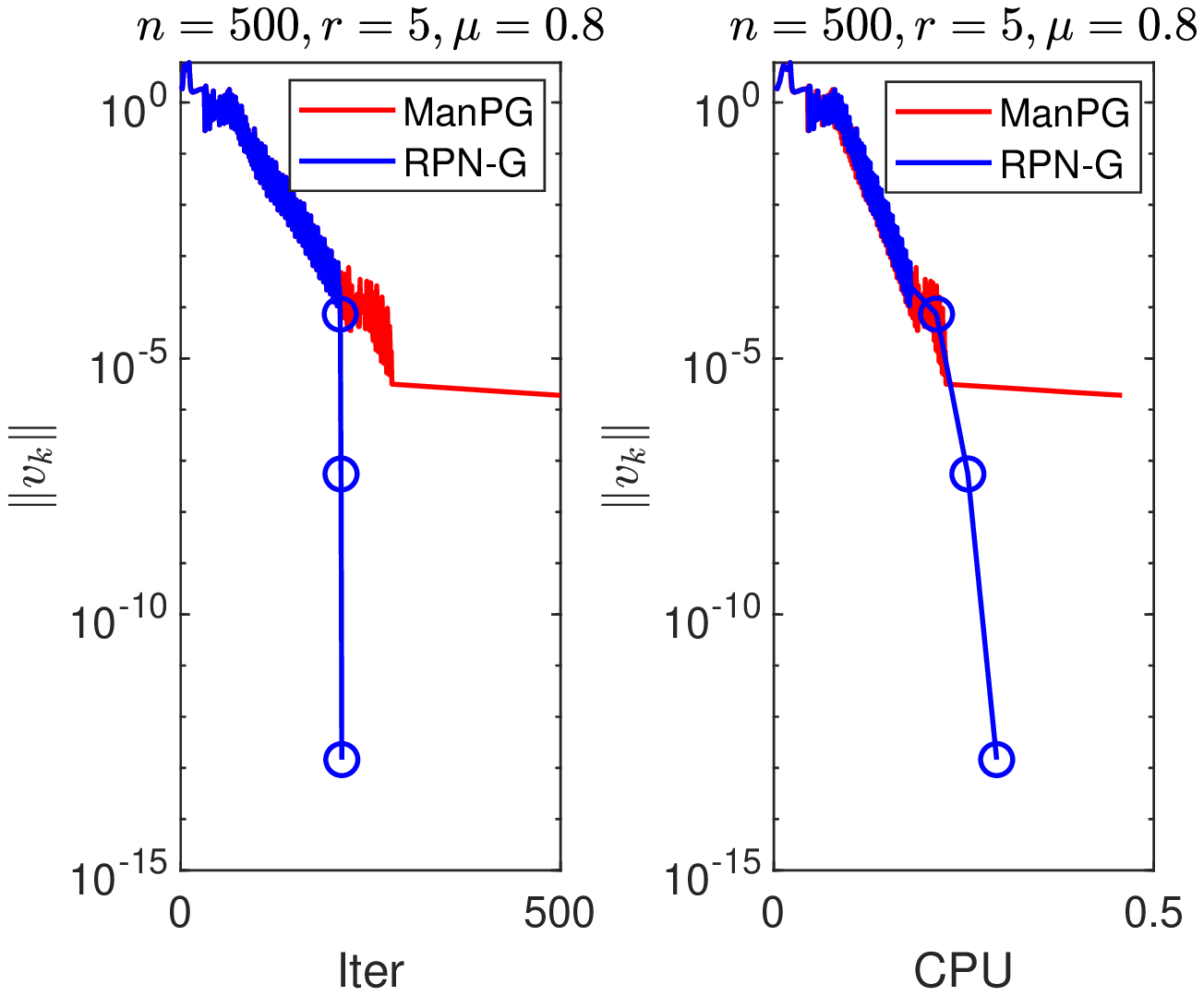}
\includegraphics[width=182pt]{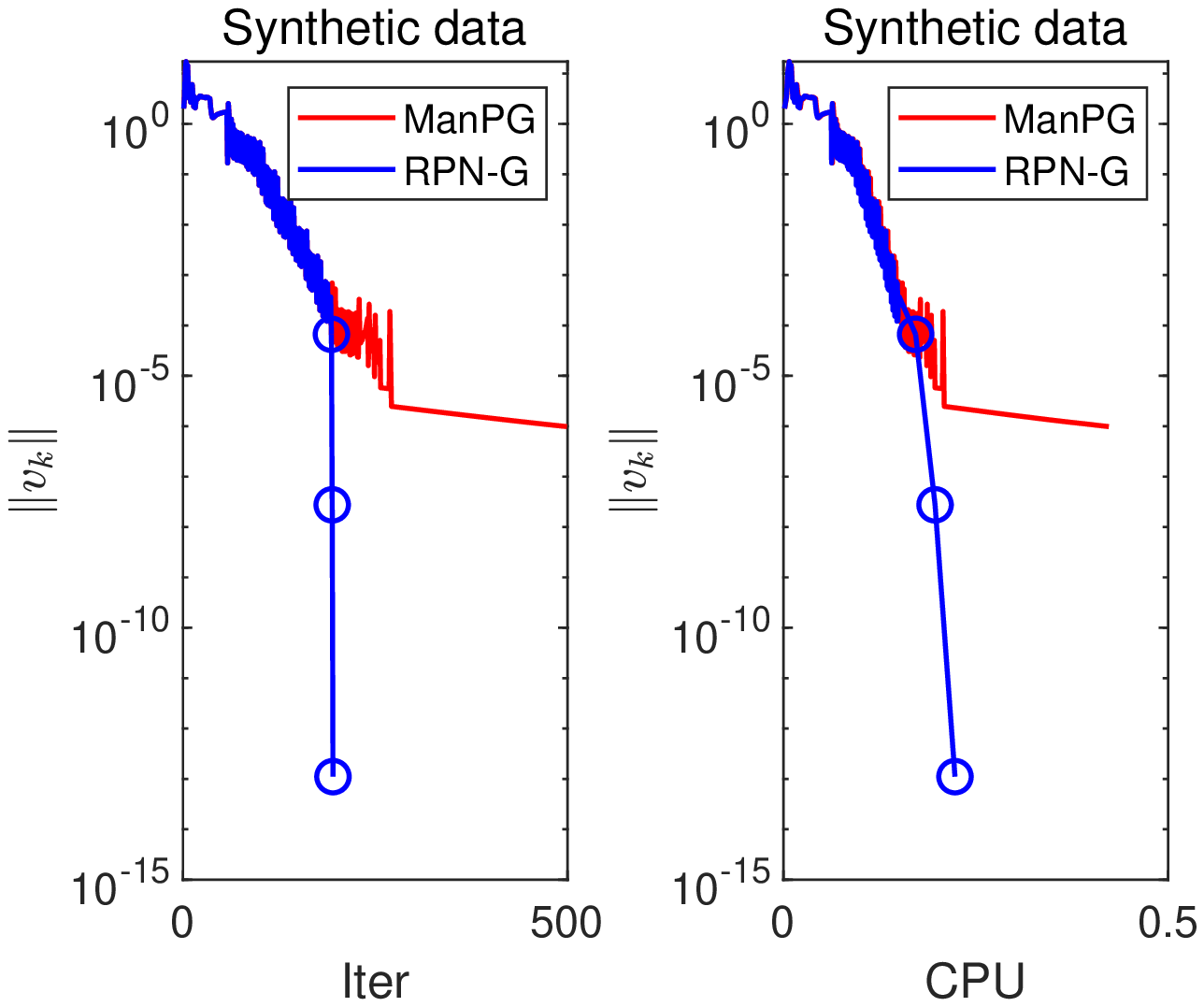}
\caption{Left: Random data: plots of $\|v_k\|$ versus iterations and CPU times respectively, where $(n,r,\mu) = (500,5,0.8)$. Right: Synthetic data: plots of $\|v_k\|$ versus iterations and CPU times respectively, where $\|v_k\|$ is the norm of search direction, data matrix $A\in \R^{400\times 40}$ is from the synthetic data, $\mu$ is set to be 0.8, $r$ is set to be 5. 
Note that the blue circle indicates the use of the new direction $u_k$.}\label{fig:rand_syn_5} 
\end{figure}

\section{Conclusion and Future Work}\label{sec:con} 

In this paper, we proposed a Riemannian proximal Newton method for solving optimization problems with separable structure, $f + \mu \|x\|_1$, over an embedded submanifold. It is proven that the proposed algorithm achieves quadratic convergence under certain reasonable assumptions. We further proposed a hybrid method that combines a Riemannian proximal gradient method and the Riemannian proximal Newton method. The hybrid method has been proven to have global convergence and the local quadratic convergence rate. Numerical results demonstrate the effectiveness of the proposed methods.

There are several directions for future research. In this paper, we have only considered a specific type of optimization problem, namely $f(x) + h(x)$ with $h(x) = \mu \|x\|_1$, on an embedded submanifold. It would be natural to extend the Riemannian proximal Newton method to handle more general nonsmooth functions $h(x)$ and more general manifolds. Additionally, we have only proposed a hybrid method for achieving global convergence, where the global convergence relies on the empirical selection of parameters. Therefore, it would be valuable to further investigate the globalization of the Riemannian proximal Newton method. The current analyses rely on solving Subproblem~\eqref{eq:subforv} and the linear system~\eqref{3-3} exactly. Proposing an inexact Riemannian proximal Newton method is also a valuable future research direction.

\section*{Acknowledgements}

The authors would like to thank Defeng Sun for the discussions on semismooth analysis.

\bibliographystyle{alpha}
\bibliography{rpn_refer}

\newpage
\appendix

\section{Semismooth Implicit Function Theorem}\label{appA}

The implicit function theorem for semismooth function comes from~\cite[Theorem 4]{Gowda2004}, it relies on the inverse function theorem for G-semismooth function~\cite[Theorem~2]{Gowda2004}, we first restate the result in Lemma~\ref{G:inv}.

\begin{lemma}[G-semismooth Inverse Function Theorem]\label{G:inv}
    Suppose that $f: \Omega \to \R^n$ be G-(strongly) semismooth function with respect to $\mathcal{K}_f$ on $\Omega$, where $\Omega\subset \R^n$ be an open set. Fix a point $x^0 \in \Omega$ and suppose that 
    \begin{itemize}
        \item[(i)] If $f$ is differentiable at $x\in \Omega$, then $\nabla f(x) \in \mathcal{K}_f (x)$.
        \item[(ii)] the multivalued mapping $x\mapsto \mathcal{K}_f (x)$ is compact valued and upper semicontinuous at each point of $\Omega$.
        \item[(iii)] $\mathcal{K}_f (x^0)$ consists of positively (negatively) oriented matrices.
        \item[(iv)] the topological index of $f$ at $x^0$ is 1 (respectively, -1), i.e., $\mathrm{index}(f, x^0)=1$(respectively, -1).
    \end{itemize}
    Then there exists an open neighborhood $\mathcal{U}$ of $x^0$ and an open neighborhood $\mathcal{V}$ of $f(x^0)$ such that $f: \mathcal{U} \to \mathcal{V}$ is bijective and $f$ has an inverse $h: \mathcal{V} \to \mathcal{U}$ that is locally Lipschitz, and $h$ is G-(strongly) semismooth at each $v \in \mathcal{V}$ with respect to $\mathcal{K}_h (v) = \{A^{-1}: A\in \mathcal{K}_f (u)\}$ and $v = f(u)$, where the map $v \mapsto \mathcal{K}_h(v)$ is compact valued and upper semicontinuous at each point of $\mathcal{V}$.
\end{lemma}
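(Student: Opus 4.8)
The plan is to prove Lemma~\ref{G:inv} by nonsmooth topological degree theory, as in~\cite[Theorem~2]{Gowda2004}; I outline the main steps and indicate where the difficulty lies.

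\emph{Step 1 (a neighbourhood of uniform nonsingularity).} Since $\mathcal{K}_f(x^0)$ is compact and, by (iii), consists of matrices whose determinants share a common sign $\sigma\in\{+1,-1\}$, there are constants $c,M>0$ with $\sigma\det A\ge c$ and $\|A^{-1}\|\le M$ for every $A\in\mathcal{K}_f(x^0)$. The set $\mathcal{O}=\{B:\sigma\det B>c/2,\ \|B^{-1}\|<2M\}$ is open and contains the compact set $\mathcal{K}_f(x^0)$, so by upper semicontinuity (ii) there is $r_0>0$ with $\mathcal{K}_f(x)\subset\mathcal{O}$ for all $x\in \bar B(x^0,r_0)$; hence every matrix occurring in $\mathcal{K}_f$ over $\bar B(x^0,r_0)$ is invertible, has determinant of sign $\sigma$, and has inverse of norm at most $2M$. \emph{Step 2 (local surjectivity from the index).} Assumption (iv) presupposes that $x^0$ is an isolated zero of $x\mapsto f(x)-f(x^0)$ and that $\deg\big(f,B(x^0,r),f(x^0)\big)=\sigma$ for all small $r\le r_0$ (the sign conventions in (iii)--(iv) being matched). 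Fix such an $r$ and let $\mathcal{V}$ be the connected component of $f(x^0)$ in the open set $\R^n\setminus f(\partial B(x^0,r))$; by homotopy invariance of degree in the value argument, $\deg\big(f,B(x^0,r),w\big)=\sigma\neq 0$ for every $w\in\mathcal{V}$, hence $f(B(x^0,r))\supset\mathcal{V}$.

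\emph{Step 3 (local injectivity).} Shrinking $r$ if needed, the claim is that $f$ is one-to-one on $B(x^0,r)$. Because $f$ is G-semismooth with $\mathcal{K}_f$ usc and compact-valued, at each $x$ near $x^0$ the directional derivative $f'(x;\cdot)$ exists, is positively homogeneous, and satisfies $f'(x;d)=J_d\,d$ for some $J_d\in\mathcal{K}_f(x)$ (obtained by letting $t\downarrow 0$ in $f(x+td)-f(x)=J_{td}(td)+\smallO(t)$ and using (i)--(ii)); by Step~1 the $J_d$ are coherently oriented and nonsingular, so $f'(x;\cdot)$ is a homeomorphism of $\R^n$ with degree $\sigma$, and each preimage of a $w\in\mathcal{V}$ is an isolated solution of $f(\cdot)=w$ with local degree $\sigma$. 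By additivity of the degree over the finitely many preimages,
\[
\sigma=\deg\big(f,B(x^0,r),w\big)=\sum_{x\in f^{-1}(w)\cap B(x^0,r)}\deg\big(f,B(x,\rho_x),w\big)=\sigma\cdot\#\big(f^{-1}(w)\cap B(x^0,r)\big),
\]
so $w$ has exactly one preimage. Thus $f$ restricts to a continuous bijection, hence a homeomorphism, from $\mathcal{U}:=f^{-1}(\mathcal{V})\cap B(x^0,r)$ onto the open set $\mathcal{V}$; let $h:\mathcal{V}\to\mathcal{U}$ be its inverse.

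\emph{Step 4 (Lipschitz continuity and (strong) G-semismoothness of $h$).} For $v,v'\in\mathcal{V}$ with $x=h(v)$, $y=h(v')$, pick $J\in\mathcal{K}_f(x)$ realizing $f(x)-f(y)-J(x-y)=\smallO(\|x-y\|)$ (respectively $\mathcal{O}(\|x-y\|^2)$ in the strong case); since $\|J^{-1}\|\le 2M$ by Step~1, $\|x-y\|\le 2M\|J(x-y)\|\le 2M(\|v-v'\|+\smallO(\|x-y\|))$, and absorbing the $\smallO$-term yields $\|h(v)-h(v')\|\le 4M\|v-v'\|$ locally, so $h$ is locally Lipschitz. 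With the same $J$ and $J^{-1}\in\mathcal{K}_h(v):=\{A^{-1}:A\in\mathcal{K}_f(h(v))\}$,
\[
h(v)-h(v')-J^{-1}(v-v')=J^{-1}\big(J(x-y)-(f(x)-f(y))\big)=\smallO(\|v-v'\|),
\]
which is $\mathcal{O}(\|v-v'\|^2)$ in the strong case by the Lipschitz bound; hence $h$ is G-(strongly) semismooth with respect to $\mathcal{K}_h$, which is compact-valued and upper semicontinuous because $\mathcal{K}_f$ is, matrix inversion is continuous on the relatively compact family of matrices from Step~1, and $h$ is continuous. The crux of the whole argument is Step~3: upper semicontinuity of $\mathcal{K}_f$ together with coherent orientation of $\mathcal{K}_f(x^0)$ do not by themselves force local injectivity (the convex hull of coherently oriented matrices may be singular), which is exactly why the nonzero-index hypothesis (iv) is imposed and the degree-counting identity above is needed; a secondary technical point is making the $\smallO$-estimates uniform enough over nearby base points to extract the global Lipschitz constant, which uses compactness of $\bar B(x^0,r_0)$ and the upper semicontinuity of $\mathcal{K}_f$.
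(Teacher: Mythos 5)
The paper never proves this lemma: it is an explicit restatement of~\cite[Theorem~2]{Gowda2004} (``we first restate the result''), and the appendix only proves Corollary~\ref{coro-2-1} \emph{from} it. So there is no in-paper argument to match; what you propose is a from-scratch degree-theoretic proof, which is indeed the general strategy behind Gowda's theorem (Steps 1 and 2 are fine: uniform coherent orientation near $x^0$ from compactness and upper semicontinuity, and local surjectivity from the nonzero index via homotopy invariance in the value argument).

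The genuine gap is in Step 3, exactly where you locate the crux. First, G-semismooth functions need \emph{not} be directionally differentiable -- that is precisely the difference between Definition~\ref{def-2-3} and G-semismoothness in this paper -- so ``the directional derivative $f'(x;\cdot)$ exists \dots and satisfies $f'(x;d)=J_d d$'' is unjustified; what upper semicontinuity and compactness of $\mathcal{K}_f$ actually give is only that along subsequences $t_k\downarrow 0$ the difference quotients converge to $Jd$ for \emph{some} $J\in\mathcal{K}_f(x)$ (H-differentiability), and different subsequences may yield different limits. Second, even granting a positively homogeneous first-order model $d\mapsto J_d d$ with all $J_d$ nonsingular and coherently oriented, such a map need not be a homeomorphism of degree $\sigma$: a piecewise linear ``wrap-around'' map of $\R^2$ (eight sectors of angle $\pi/4$, each mapped linearly and orientation-preservingly onto a sector of angle $\pi/2$) has every selection matrix with positive determinant yet is two-to-one with local degree $2$ at the origin. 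Consequently your key assertion -- that every preimage $x$ of $w$ has local degree of sign $\sigma$, which is what the additivity count needs (note also that only ``nonzero with sign $\sigma$'', not ``$=\sigma$'', is available and is what the count should use) -- is unproven at points other than $x^0$, where no index hypothesis is assumed; isolatedness of each preimage is easy (if $f(y)=f(x)$ with $y$ near $x$, take $J\in\mathcal{K}_f(y)$ in the G-semismooth estimate at $x$ and use the uniform bound $\|J^{-1}\|\le 2M$), but the sign of the local degree at such points is precisely the nontrivial content of Gowda's theorem and does not follow from coherent orientation by your argument. A secondary but real issue is Step 4: G-semismoothness at $y$ gives $f(x)-f(y)-J(x-y)=o(\|x-y\|)$ only as $x\to y$ with $y$ fixed, and the two-point uniformity needed for the Lipschitz constant $4M$ (and for the strong case $\mathcal{O}(\|v-v'\|^2)$ bound) does not follow from compactness and upper semicontinuity of $\mathcal{K}_f$ alone, since the remainder in a pointwise semismoothness estimate need not be uniform in the base point. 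As the paper's own treatment shows, the defensible options are to cite \cite[Theorem~2]{Gowda2004} or to reproduce its actual argument; the present sketch does not yet close these steps.
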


\begin{remark}
    In~\cite{Gowda2004}, the author mainly consider a class of H-differentiable function,
    and it follows from the definition of H-differentible~\cite[Definition~1]{Gowda2004},  G-(strongly) semismooth function belong to H-differentible functions, we refer interested reader to~\cite{Gowda2004} for more detail. .
\end{remark}

Note that Lemma~\ref{G:inv} requires the notion of positively (negatively) oriented matrices
and the notion of a topological index of a function.  A matrix is said to be positively (negatively) oriented if it has a positive (negative) determinant sign. The topological index is, however, not easy to verify. Fortunately, a sufficient condition has been given in~\cite[Theorem 6]{Pang2003} and we give it in Lemma~\ref{lemm-2-2}.

\begin{lemma} \label{lemm-2-2}
Let $\Phi : \mathbb{R}^n \to \mathbb{R}^n$ be Lipschitz continuous in an open neighborhood $\mathcal{D}$ of a vector $x^0$. Consider the following statements:
\begin{itemize}
    \item[(a)] every matrix in $\partial \Phi(x^0)$ is nonsingular;
    \item[(b)] for every $V \in \partial_{\mathrm{B}} \Phi(x^0)$, $\mathrm{sgn} \det V = \mathrm{index}(\Phi, x^0) = \pm 1$.
\end{itemize}
It holds that $(a) \Rightarrow (b)$. 
\end{lemma}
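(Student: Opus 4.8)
The plan is to deduce $(b)$ from $(a)$ in three stages: first show that $\Phi$ is a Lipschitz local homeomorphism near $x^0$, so that $\mathrm{index}(\Phi,x^0)$, understood as the Brouwer degree $\deg(\Phi,B(x^0,r),\Phi(x^0))$ for small $r$, is well defined and equal to $\pm1$; then show that $\det$ has a constant sign on $\partial\Phi(x^0)$; and finally identify that sign with the index via a degree-theoretic linearization argument. Throughout I would use the homotopy invariance, excision, and additivity of the Brouwer degree, together with the fact that $y\mapsto\deg(\Phi,\Omega,y)$ is locally constant off $\Phi(\partial\Omega)$.

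For the first two stages: since $\partial\Phi(x^0)$ is compact and consists of nonsingular matrices and $x\mapsto\partial\Phi(x)$ is upper semicontinuous with compact values, there is a neighborhood $\mathcal{D}_0\subseteq\mathcal{D}$ of $x^0$ on which every matrix in $\partial\Phi(x)$ is nonsingular. I would then invoke Clarke's inverse function theorem~\cite{clarke1990} to conclude that $\Phi$ is a Lipschitz homeomorphism from a neighborhood $\mathcal{U}\subseteq\mathcal{D}_0$ of $x^0$ onto a neighborhood of $\Phi(x^0)$; hence for $r>0$ small enough that $\bar B(x^0,r)\subseteq\mathcal{U}$, the point $x^0$ is the only solution of $\Phi(x)=\Phi(x^0)$ in $\bar B(x^0,r)$, and $\mathrm{index}(\Phi,x^0)=\deg(\Phi,B(x^0,r),\Phi(x^0))$ is well defined and equals $\pm1$ as the degree of a homeomorphism on a ball. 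Since $\partial\Phi(x^0)=\conv\{\partial_{\mathrm{B}}\Phi(x^0)\}$ is connected and $\det$ is continuous and nowhere zero on it by $(a)$, $\mathrm{sgn}\det V$ equals a single value $\sigma\in\{-1,1\}$ for all $V\in\partial_{\mathrm{B}}\Phi(x^0)$.

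For the last stage I would fix $V\in\partial_{\mathrm{B}}\Phi(x^0)$ and choose points of differentiability $x_k\to x^0$ of $\Phi$ with $J\Phi(x_k)\to V$; for $k$ large, $x_k\in B(x^0,r)$, $J\Phi(x_k)$ is nonsingular, and $\mathrm{sgn}\det J\Phi(x_k)=\sigma$. For such $k$ and $\rho>0$ small, the homotopy $H(s,x)=(1-s)\Phi(x)+s\bigl[\Phi(x_k)+J\Phi(x_k)(x-x_k)\bigr]$ avoids $\Phi(x_k)$ on $\partial B(x_k,\rho)$, because $\Phi(x)-\Phi(x_k)-J\Phi(x_k)(x-x_k)=\smallO(\|x-x_k\|)$ while $\|J\Phi(x_k)(x-x_k)\|$ is bounded below by a positive multiple of $\|x-x_k\|$, so homotopy invariance gives $\deg(\Phi,B(x_k,\rho),\Phi(x_k))=\mathrm{sgn}\det J\Phi(x_k)=\sigma$. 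On the other hand, excision and the injectivity of $\Phi$ on $\bar B(x^0,r)$ give $\deg(\Phi,B(x_k,\rho),\Phi(x_k))=\deg(\Phi,B(x^0,r),\Phi(x_k))$, and since $\Phi(x_k)\to\Phi(x^0)\notin\Phi(\partial B(x^0,r))$, the latter equals $\deg(\Phi,B(x^0,r),\Phi(x^0))=\mathrm{index}(\Phi,x^0)$ for $k$ large. Chaining the equalities yields $\mathrm{index}(\Phi,x^0)=\sigma=\mathrm{sgn}\det V$, which is exactly $(b)$.

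The hard part will be the last stage: one has to choose $\rho$ (depending on $k$) small enough that simultaneously the linearization homotopy is admissible on $\partial B(x_k,\rho)$ and $\bar B(x_k,\rho)\subseteq B(x^0,r)$ contains no preimage of $\Phi(x_k)$ other than $x_k$, and one cannot appeal to the classical inverse function theorem at $x_k$ because $\Phi$ need not be $C^1$ in any neighborhood of $x_k$ --- only differentiable at $x_k$ itself --- which forces the degree at $x_k$ to be computed through the pointwise linearization. The remaining ingredients (upper semicontinuity of $\partial\Phi$, Clarke's inverse function theorem, and the standard properties of the Brouwer degree) enter in a routine way.
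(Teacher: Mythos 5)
Your argument is sound, but note that the paper does not prove this lemma at all: it simply quotes it from \cite[Theorem~6]{Pang2003} (as a known sufficient condition for computing the topological index), so there is no in-paper proof to match against. What you have written is essentially a self-contained reconstruction of the standard argument behind that cited result: (i) nonsingularity of the compact convex (hence connected) set $\partial \Phi(x^0)$ forces $\det$ to have one sign $\sigma$ on it; (ii) Clarke's inverse function theorem makes $\Phi$ a local Lipschitz homeomorphism, so $x^0$ is an isolated solution of $\Phi(x)=\Phi(x^0)$ and $\mathrm{index}(\Phi,x^0)=\deg(\Phi,B(x^0,r),\Phi(x^0))$ is well defined; (iii) at nearby differentiability points $x_k$ with $J\Phi(x_k)\to V$, the linearization homotopy (admissible because $J\Phi(x_k)$ is nonsingular and the remainder is $\smallO(\|x-x_k\|)$, even though $\Phi$ is only differentiable at the single point $x_k$) gives $\deg(\Phi,B(x_k,\rho),\Phi(x_k))=\sgn\det J\Phi(x_k)=\sigma$, and excision plus local constancy of the degree in the target value transfers this to $\deg(\Phi,B(x^0,r),\Phi(x^0))$. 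Two of your ingredients are actually dispensable: the upper-semicontinuity step producing a neighborhood where all Clarke Jacobians are nonsingular is never used (nonsingularity of $J\Phi(x_k)$ for large $k$ already follows from $J\Phi(x_k)\to V$ and continuity of $\det$), and the general fact that a homeomorphism on a ball has degree $\pm 1$ is not needed since your chain of equalities yields $\mathrm{index}(\Phi,x^0)=\sigma\in\{-1,1\}$ directly. With those trims, the proof is correct and is, in substance, the proof one finds in the reference the paper relies on.
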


We present and prove the semismooth implicit function theorem in Corollary~\ref{lemm-2-3}, which is a  rearrangement  of Lemma~\ref{G:inv} and Lemma~\ref{lemm-2-2}.

\begin{corollary}[Restatement of Corollary~\ref{coro-2-1}]\label{lemm-2-3}
Suppose that $F: \mathbb{R}^n\times \mathbb{R}^m \to \mathbb{R}^m$ is a (strongly) semismooth function with respect to $\partial_{\mathrm{B}} F$ in an open neighborhood of $(x^0,y^0)$ with $F(x^0,y^0) = 0$. Let $H(y) = F(x^0, y)$. If every matrix in $\partial H(y^0)$ is nonsingular, then there exists an open set $\mathcal{V} \subset \mathbb{R}^n$ containing $x^0$, a set-valued function $\mathcal{K}:\mathcal{V} \rightrightarrows \mathbb{R}^{m \times n}$, and a G-(strongly) semismooth function $f:\mathcal{V} \to \mathbb{R}^m$ with respect to $\mathcal{K}$ 
satisfying $f(x^0) = y^0$, 
$$
F(x, f(x)) = 0
$$
for every $x\in \mathcal{V}$ and the set-valued function $\mathcal{K}$ is
\[
\mathcal{K}:x \mapsto \{-(A_y)^{-1} A_x: [A_x\ A_y] \in \partial_{\mathrm{B}} F \big(x,f(x)\big)\},
\]
where the map $x\mapsto \mathcal{K}(x)$ is compact valued and upper semicontinuous.

\end{corollary}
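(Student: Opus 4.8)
The plan is to reduce Corollary~\ref{lemm-2-3} (i.e.\ Corollary~\ref{coro-2-1}) to the G-semismooth inverse function theorem (Lemma~\ref{G:inv}), applied to a lifted map, with Lemma~\ref{lemm-2-2} supplying the topological-index hypothesis. \emph{Step 1 (Lifting).} I would introduce $\Phi:\R^n\times\R^m\to\R^n\times\R^m$, $\Phi(x,y)=(x,F(x,y))$, together with the candidate generalized-Jacobian map
\[
\mathcal{K}_\Phi(x,y)=\left\{\begin{bmatrix} I_n & 0\\ A_x & A_y\end{bmatrix}: [A_x\ A_y]\in\partial_{\mathrm{B}}F(x,y)\right\}.
\]
Since the first $n$ coordinates of $\Phi$ are the identity, a point is a differentiability point of $\Phi$ iff it is one of $F$, and there $\nabla\Phi$ is exactly this block matrix built from $\nabla F$; hence $\mathcal{K}_\Phi=\partial_{\mathrm{B}}\Phi$ everywhere, $\mathcal{K}_\Phi$ is compact-valued and upper semicontinuous (inherited from $\partial_{\mathrm{B}}F$ through the continuous affine embedding) and contains the Jacobian at every differentiability point. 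Writing the (strong) semismoothness estimate of $F$ from Definition~\ref{def-2-3} in the extra identity block shows that $\Phi$ is G-(strongly) semismooth with respect to $\mathcal{K}_\Phi$ near $(x^0,y^0)$, so hypotheses (i)--(ii) of Lemma~\ref{G:inv} hold for $\Phi$.

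\emph{Step 2 (Nonsingularity and index).} Because $\partial\Phi(x^0,y^0)=\mathrm{conv}\,\partial_{\mathrm{B}}\Phi(x^0,y^0)$ consists of block matrices $\begin{bmatrix} I_n & 0\\ C & D\end{bmatrix}$ with $[C\ D]\in\partial F(x^0,y^0)$, each having determinant $\det D$, it suffices to show that every $y$-block $D$ occurring in $\partial F(x^0,y^0)$ is nonsingular. By the standard partial-subdifferential inclusion of Clarke's calculus, the projection onto the $y$-columns satisfies $\{D:[C\ D]\in\partial F(x^0,y^0)\}\subseteq\partial H(y^0)$ with $H(y)=F(x^0,y)$, and by hypothesis every matrix in $\partial H(y^0)$ is nonsingular; hence every matrix in $\partial\Phi(x^0,y^0)$ is nonsingular. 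Applying Lemma~\ref{lemm-2-2} to $\Phi$ at $(x^0,y^0)$ then gives $\mathrm{sgn}\det V=\mathrm{index}(\Phi,(x^0,y^0))=\pm1$ for every $V\in\partial_{\mathrm{B}}\Phi(x^0,y^0)$, which is precisely hypotheses (iii)--(iv) of Lemma~\ref{G:inv} (all elements of $\mathcal{K}_\Phi(x^0,y^0)$ uniformly oriented, index $\pm1$).

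\emph{Step 3 (Inversion and extraction).} Lemma~\ref{G:inv} now yields neighborhoods $\mathcal{U}'\ni(x^0,y^0)$ and $\mathcal{W}'\ni\Phi(x^0,y^0)=(x^0,0)$ and a locally Lipschitz, G-(strongly) semismooth inverse $\Psi:\mathcal{W}'\to\mathcal{U}'$ with $\mathcal{K}_\Psi(w)=\{M^{-1}:M\in\partial_{\mathrm{B}}\Phi(\Psi(w))\}$, compact-valued and upper semicontinuous. Since $\Phi$ fixes the first $n$ coordinates, so does $\Psi$; write $\Psi(a,b)=(a,g(a,b))$, set $\mathcal{V}=\{a\in\R^n:(a,0)\in\mathcal{W}'\}$ (open, containing $x^0$) and $f(a):=g(a,0)$. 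From $\Phi(\Psi(a,0))=(a,0)$ we get $(a,F(a,f(a)))=(a,0)$, i.e.\ $F(a,f(a))=0$ on $\mathcal{V}$, and $f(x^0)=y^0$. Finally, using
\[
\begin{bmatrix} I_n & 0\\ A_x & A_y\end{bmatrix}^{-1}=\begin{bmatrix} I_n & 0\\ -A_y^{-1}A_x & A_y^{-1}\end{bmatrix},
\]
evaluating the semismoothness estimate for $\Psi$ along directions $(d,0)$ and reading off the last $m$ coordinates shows $f$ is G-(strongly) semismooth with respect to $\mathcal{K}(x)=\{-(A_y)^{-1}A_x:[A_x\ A_y]\in\partial_{\mathrm{B}}F(x,f(x))\}$; this map is compact-valued and upper semicontinuous because $x\mapsto\partial_{\mathrm{B}}F(x,f(x))$ is (composition with the continuous $x\mapsto(x,f(x))$) and $[A_x\ A_y]\mapsto-A_y^{-1}A_x$ is continuous on the compact set of matrices arising near $(x^0,y^0)$, where all $A_y$ are nonsingular by Step 2 and upper semicontinuity of $\partial_{\mathrm{B}}F$.

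\emph{Expected main obstacle.} The only delicate point is Step 2: correctly passing from nonsingularity of the \emph{partial} generalized Jacobian $\partial H(y^0)$ to nonsingularity of the \emph{full} lifted Clarke Jacobian $\partial\Phi(x^0,y^0)$, which hinges on the inclusion $\pi_y\,\partial F(x^0,y^0)\subseteq\partial H(y^0)$ and on keeping the B-subdifferential and the Clarke subdifferential straight (Lemma~\ref{lemm-2-2} requires all of $\partial\Phi(x^0,y^0)$, not merely $\partial_{\mathrm{B}}\Phi(x^0,y^0)$, to be nonsingular). Everything else is bookkeeping with the $2\times2$ block form and transporting G-semismoothness through a projection and a restriction to the slice $\{w=0\}$.
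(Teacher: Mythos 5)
Your Steps 1 and 3 follow the paper's proof essentially verbatim: the same lifted map $\mathcal{F}(x,y)=(x,F(x,y))$ with block generalized Jacobians built from $\partial_{\mathrm{B}}F$, the same appeal to Lemma~\ref{G:inv}, and the same extraction $f=\mathcal{P}_2\circ\mathcal{H}\circ\phi$ yielding $\mathcal{K}(x)=\{-(A_y)^{-1}A_x\}$ via the block inverse. The genuine gap is precisely at the point you flagged, Step 2. The containment you invoke, $\{D:[C\ D]\in\partial F(x^0,y^0)\}\subseteq\partial H(y^0)$, is not a valid rule of Clarke's calculus; the only containment that holds in general runs the other way (the subdifferential of the partial map sits inside the projection of the full Clarke Jacobian, possibly strictly). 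A one-dimensional example: $F(x,y)=|x+y|-|y|+y$ at $(x^0,y^0)=(0,0)$ has $H(y)=y$, so $\partial H(0)=\{1\}$ is nonsingular, yet the $y$-entries of the elements of $\partial_{\mathrm{B}}F(0,0)$ are $\{-1,1,3\}$, so the projection is $[-1,3]\ni 0$. Hence the lifted Clarke Jacobian $\partial\Phi(0,0)$ (whose elements have determinant equal to the $y$-entry) contains singular matrices, and the elements of $\partial_{\mathrm{B}}\Phi(0,0)$ do not even share a determinant sign. Consequently Lemma~\ref{lemm-2-2} cannot be applied to $\Phi$, and your verification of hypotheses (iii)--(iv) of Lemma~\ref{G:inv} collapses: nonsingularity of $\partial H(y^0)$ simply does not transfer to $\partial\Phi(x^0,y^0)$.

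The paper takes a different route at exactly this step: it applies Lemma~\ref{lemm-2-2} only to the partial map $H$, obtaining $\mathrm{sgn}\det V=\mathrm{index}(H,y^0)=\pm 1$ for all $V\in\partial_{\mathrm{B}}H(y^0)$, and then transfers the orientation and index conditions to the lifted map $\mathcal{F}$ by the topological-degree argument in the proof of Gowda's Theorem~4, which it cites and omits; it never claims nonsingularity of the whole lifted Clarke Jacobian, which, as the example shows, need not hold. To repair your argument you would either have to reproduce that degree-theoretic step relating $\mathrm{index}(\mathcal{F},(x^0,y^0))$ to $\mathrm{index}(H,y^0)$, or strengthen the hypothesis to require nonsingularity, with a common determinant sign, of the projected set $\{A_y:[A_x\ A_y]\in\partial_{\mathrm{B}}F(x^0,y^0)\}$ (this is what conditions (iii)--(iv) of Lemma~\ref{G:inv} literally demand of $\mathcal{K}_{\mathcal{F}}(x^0,y^0)$); the hypothesis as stated, on $\partial H(y^0)$ alone, does not give you this by any subdifferential calculus rule.
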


\begin{proof}
    Since every matrix in $\partial H (y^0)$ is nonsingular, where $H: \R^m \to \R^m : y \mapsto H(y) = F(x^0,y)$, then it follows Lemma~\ref{lemm-2-2}, for every $V \in \partial_{\mathrm{B}} H(x^0)$, $\mathrm{sgn} \det V = \mathrm{index}(H, x^0) = \pm 1$. Note that $H$ is also G-(strongly) semismooth at $y^0$ with respect to
    $$
    \mathcal{K}_H (y^0) = \{ A_y: [A_x\ A_y] \in \partial_{\mathrm{B}} F \big(x^0,y^0\big)\},
    $$
    where the map $y\mapsto \mathcal{K}_H (y)$ is compact valued and upper semicontinuous. Define $\mathcal{F}: \mathcal{D}\to \mathbb{R}^n\times \mathbb{R}^m$ by
    $$
    \mathcal{F}(x,y) = \left(x, F(x,y)\right),
    $$
    where $\mathcal{D} \subset \mathbb{R}^n\times \mathbb{R}^m$ is an open set including $(x^0,y^0)$. It is easily seen that $\mathcal{F}$ is G-(strongly) semismooth at $(x^0,y^0)\in \mathcal{D}$ with respect to 
    $$
    \mathcal{K}_{\mathcal{F}}(x^0,y^0) = \left\{ \begin{bmatrix}
        \I_n & 0\\
        A_x & A_y
    \end{bmatrix}: [A_x\ A_y] \in \partial_{\mathrm{B}} F \big(x^0,y^0\big)\right\},
    $$
    where the map $(x,y)\mapsto \mathcal{K}_{\mathcal{F}} (x, y)$ is compact valued and upper semicontinuous.
    
    We now verify conditions (i)-(iv) of Lemma~\ref{G:inv} for $\mathcal{F}$. 
   Since $\mathcal{K}_{\mathcal{F}}(x,y)$ is defined with the $\partial_{\mathrm{B}} F \big(x,y\big)$, then conditions (i) and (ii) are clearly satisfied for $\mathcal{F}$ and $\mathcal{K}_{\mathcal{F}}$. 
    As for conditions (iii) and (iv), it involves the property of topological index and the verification process of conditions (iii) and (iv) is exactly the same as that proof of~\cite[Theorem 4]{Gowda2004}, we 
    omit it and refer interested reader to~\cite{Gowda2004} for more detail. 
    
    According to Lemma~\ref{G:inv}, there exist a neighborhood $\mathcal{U}$ of $(x^0, y^0)$ and a neighborhood $\mathcal{V} \times \mathcal{W}$ of $\mathcal{F}(x^0,y^0)$ such that for every $v\in \mathcal{V}$ and $w\in \mathcal{W}$, there is a unique $z = (x,y) \in \mathcal{U}$ such that 
    $$
    \mathcal{F}(z) = (v, w).
    $$
    Since $\mathcal{F}(x^0, y^0) = (x^0, 0)$, we can fix $\omega$ to be 0. Thus, for each $v\in \mathcal{V}$, there is a unique $z = (x,y) \in \mathcal{U}$ with
    $$
    (x, F(x,y) ) = \mathcal{F}(z) = (v,0),
    $$
    which implies that $x = v$ and $F(x,y) = 0$. Thus, for every $x \in \mathcal{V}$, there is a unique $y$ such that $(x,y) \in \mathcal{U}$ and $F(x,y) = 0$. Therefore, $y$ is the function of $x$, we let $y = f(x)$, where $x\in \mathcal{V}$, then $\mathcal{F}(x, f(x)) = 0$ for any $x\in \mathcal{V}$. Furthermore, let $\mathcal{H}: \mathcal{V} \times \mathcal{W} \to \mathcal{U}$ denote the inverse of $\mathcal{F}$ on $\mathcal{U}$. According to Lemma~\ref{G:inv}, $\mathcal{H}$ is also G-(strongly) semismoooth at any $(v,w) \in \mathcal{V}\times \mathcal{W}$ with respect to $\mathcal{K}_{\mathcal{H}}(v,w) = \{M^{-1}: M\in \mathcal{K}_{\mathcal{F}}(x,y)\}$, where $\mathcal{F}(x,y) = (v,w)$ and the map $(v,w) \mapsto \mathcal{K}_{\mathcal{H}}(v,w)$ is compacted valued and upper semicontinuous at each point of $\mathcal{V}\times \mathcal{W}$. 
    
    Now $f$ can be written as
    $$
    f = \mathcal{P}_2 \circ \mathcal{H} \circ \phi,
    $$
    where $\phi$ is the inclusion map $\phi: \R^{n} \to \R^{n}\times \R^{m}:
    x \mapsto \phi(x) = (x,0)$ and $\mathcal{P}_2$ is the projection map $
    \mathcal{P}_2:\R^{n}\times \R^{m} \to \R^{m}: (x,y) \mapsto \mathcal{P}_2(x,y) = y$. Thus, $f$ is also G-(strongly) semismooth with respect to 
    $$
    \begin{aligned}
    \mathcal{K}(x) &= \left\{ \begin{bmatrix}
        0& \I_m
    \end{bmatrix} \begin{bmatrix}
        \I_n & 0\\ A_x & A_y
    \end{bmatrix}^{-1} \begin{bmatrix}
        \I_n \\0
    \end{bmatrix}  : [A_x\ A_y] \in \partial_{\mathrm{B}} F \big(x,f(x)\big).   \right\}\\
    &= \{-(A_y)^{-1} A_x: [A_x\ A_y] \in \partial_{\mathrm{B}} F \big(x,f(x)\big)\},
    \end{aligned}
    $$
    where the map $x\to \mathcal{K}(x)$ is compacted valued and upper semicontinuous. 
      
\end{proof}

\end{document}